\theoremstyle{plain}
\newtheorem{theorem}{Theorem}[section]
\newtheorem{claim}[theorem]{Claim}
\newtheorem{lemma}[theorem]{Lemma}
\newtheorem{conjecture}[theorem]{Conjecture}
\newtheorem{problem}[theorem]{Problem}
\newtheorem{proposition}[theorem]{Proposition}
\theoremstyle{definition}
\DeclareMathOperator{\disc}{disc}
\DeclareMathOperator{\rank}{rank}
\newcommand{\cA}{\mathcal{A}}
\newcommand{\cB}{\mathcal{B}}
\newcommand{\cF}{\mathcal{F}}
\newcommand{\cR}{\mathcal{R}}
\newcommand{\cS}{\mathcal{S}}
\newcommand{\bF}{\mathbb{F}}
\newcommand{\bR}{\mathbb{R}}
\newcommand{\bZ}{\mathbb{Z}}
\newcommand{\E}{\mathbb{E}}
\newcommand{\Pb}{\mathbb{P}}
\newcommand{\eps}{\varepsilon}
\newcommand{\hide}[1]{}
\title{
\vspace{-0.8cm}
Disjoint pairs in set systems and combinatorics of low rank matrices}
\author{
Zach Hunter \thanks{Department of Mathematics, ETH Z\"urich, Switzerland. Email: {\tt \{zach.hunter, aleksa.milojevic, benjamin.sudakov\}@math.ethz.ch}. Research supported in part by SNSF grant 200021-228014.}
\and Aleksa Milojevi\'c\footnotemark[1] \and Benny Sudakov \footnotemark[1]
\and 
Istv\'an Tomon\thanks{Ume\r{a} University, \emph{e-mail}: \textbf{istvantomon@gmail.com}, Research supported in part by the Swedish Research Council grant VR 2023-03375.}}
\date{}
\begin{document}
	\sloppy 
 \maketitle

\begin{abstract}

We study and solve several problems in two closely related settings: set families in $2^{[n]}$ with many disjoint pairs of sets and low rank matrices with many zero entries.
\begin{itemize}
	\item More than 40 years ago, Daykin and Erd\H{o}s asked for the maximum number of disjoint pairs of sets in a family $\cF\subseteq 2^{[n]}$ of size $2^{(1/2+\delta)n}$ and conjectured it contains at most $o(|\mathcal{F}|^2)$ such pairs. This was proven by Alon and Frankl in 1985. In this paper we completely resolve this problem, proving an optimal dependence of the number of disjoint pairs on the size of family $\cF$. 
 We also prove the natural variant of the  Daykin-Erd\H{o}s conjecture in which disjoint pairs are replaced by pairs with intersection $\lambda\neq 0$.
 
	\item Motivated by a conjecture of Lovett related to the famous log-rank conjecture, Singer and Sudan asked to show that for two families $\mathcal{A}, \mathcal{B} \subseteq 2^{[n]}$ with a positive constant fraction of pairs $(A,B)\in \mathcal{A}\times\mathcal{B}$ being disjoint, there are $\mathcal{R}\subset \mathcal{A}$ and $\mathcal{S}\subset \mathcal{B}$ such that all pairs $(R, S)\in \mathcal{R}\times \mathcal{S}$ are disjoint, and $|\mathcal{R}|\geq 2^{-O(\sqrt{n})}|\mathcal{A}|$ and $|\mathcal{S}|\geq 2^{-O(\sqrt{n})}|\mathcal{B}|$. We prove this conjecture in a strong quantitative form.

        \item A long-standing problem in coding theory is to determine the largest size of an $r$-cover-free family, which is a family of subsets of $[n]$ such that no set is covered by a union of $r$ other sets. Motivated by this question, Alon, Gilboa and Gueron asked to determine which distribution $\mu$ over $2^{[n]}$ minimizes the probability that $A_0\subseteq A_1\cup\dots\cup A_r$ where $A_0, \dots, A_r$ are random sets drawn independently from $\mu$. Using ideas from the previous bulletpoint, we obtain the tight lower bound $2^{-O(n/r)}$ for this probability.
 
	\item We prove the following generalizations of the best known bounds for the log-rank conjecture. If $M$ is an $n\times n$ non-negative integer matrix of rank $r$ in which the average of the entries is $\eps\leq 1/2$, then $M$ contains an all-zero submatrix of size at least $2^{-O(\sqrt{\eps r})}n$. Unlike the known bounds for the log-rank conjecture, this result is optimal. Moreover, using similar methods, we also prove that any $n\times n$ matrix of rank $r$ with entries from $\{0,\dots,t\}$ contains a constant submatrix of size at least $2^{-O(t\sqrt{r})}n$.
	
\end{itemize}
Our proofs use probabilistic, entropy and discrepancy methods and explore connections to additive combinatorics and coding theory.
\end{abstract}

\section{Introduction}

A central theme of Extremal Set Theory is the study of set systems satisfying certain properties about set intersections. A classical result in the area is the  Erd\H os-Ko-Rado theorem \cite{erdoskorado}, which determines precisely the maximum size of a set system in which no two sets are disjoint.  This naturally motivates further questions about the maximum/minimum number of disjoint pairs of sets in set systems of given size. 
In particular,  Daykin and Erd\H os \cite{Guy83, Banff} proposed the following problem in 1981, see also Alon and Frankl \cite{AF85}.

\begin{problem}[Daykin, Erd\H{o}s]
Determine/estimate the maximum number of pairs of disjoint sets in a set system of size $m$ on a universe of size $n$. 
\end{problem}

This problem  remained mostly open, however, there has been considerable progress. Observe that if $\cF\subset 2^{[n]}$ is the family containing all subsets of $\{1,\dots,n/2\}$ and all subsets of $\{n/2+1,\dots,n\}$, then $|\cF|=2^{n/2+1}$ and $\cF$ contains $\frac{1}{4}|\cF|^2$ (unordered) pairs of disjoint sets. Daykin and Erd\H{o}s conjectured that this is optimal in the weak sense that if a family $\cF\subseteq 2^{[n]}$ contains $\eps |\cF|^2$ pairs of sets $A, B\in \cF$ which are disjoint for some fixed $\eps>0$, then one must have $|\cF|\leq 2^{(1+o(1))n/2}$.

This conjecture  was resolved by Alon and Frankl in 1985 \cite{AF85} by an  elegant probabilistic argument. They show that if $\cF$ is of size $m=2^{(1/2+\delta)n}$, then $\cF$ contains at most $m^{2-\delta^2/2}$ disjoint pairs. However, as they note in their paper, this bound does not appear to be the best possible, and they suggest the following construction may be optimal.

\medskip
\noindent
\textbf{Construction 1.} For every positive integer $d$, let $\cF=\mathcal{A}\cup \mathcal{B}$, where $\mathcal{A}$ is the family of all sets $A$ such that $|A\cap \{1,\dots,n/2\}|\leq d$, and $\mathcal{B}$ is the family of sets $B$ such that $|B\cap \{n/2+1,\dots,n\}|\leq d$.

\medskip

In case $d$ is fixed, the size of $\cF$ is $m = 2\cdot 2^{n/2}\sum_{i=0}^d \binom{n/2}{i}=\Theta_d(2^{n/2} n^d)$ and it contains at least $2^{-2d-2}m^2$ disjoint pairs. On the other hand, taking $d=\Theta(\frac{\delta n}{\log 1/\delta})$, $\cF$ has size $m=2^{(1/2+\delta)n}$ and contains $m^{2-O(\frac{\delta}{\log1/\delta})}$ disjoint pairs. In our first theorem, we show that this construction is optimal asymptotically, and thus we settle the problem of Daykin and Erd\H os in the most interesting range $m\gg 2^{n/2}$.

\begin{theorem}\label{thm:coarse erdos daykin}
There exists $c>0$ such that for every positive integer  $n$ and  $\delta\in (0, 1/2)$, the following holds. Let $\cF\subseteq 2^{[n]}$ be a set family of size $m\geq 2^{(1/2+\delta)n}$. Then there are at most $m^{2-\frac{c\delta}{\log 1/\delta}}$ disjoint pairs in $\cF$.
\end{theorem}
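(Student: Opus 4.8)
\noindent\textit{Proof outline.}

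\textbf{An entropy reformulation.} Suppose $\cF$ contains $D=m^{2-\eta}$ ordered disjoint pairs; we must show $\eta\ge c\delta/\log(1/\delta)$. Let $(A,B)$ be a uniformly random ordered disjoint pair of $\cF$ and set $t:=\eta\log m$. Since $H(A,B)=\log D=2\log m-t$ while $H(A),H(B)\le\log m$, we obtain $H(A),H(B)\ge\log m-t$ and $I(A;B)=H(A)+H(B)-H(A,B)\le t$. Thus it suffices to prove the following self-contained claim: if $(A,B)$ are (possibly correlated) random subsets of $[n]$ with $A\cap B=\emptyset$ always, $H(A),H(B)\ge\log m-t$, and $I(A;B)\le t$, then $2\log m\le n+O\!\big(t\log(\tfrac nt+2)\big)$. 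Substituting $\log m=(1/2+\delta)n$, $t=\eta\log m$, and solving for $\eta$ yields $\eta=\Omega(\delta/\log(1/\delta))$; each inequality used is tight up to constants for Construction~1, so this is the right target. (A crude second-moment substitute for the displayed bound, with $t^2$ in place of $t\log(n/t)$, already recovers Alon--Frankl's exponent $\delta^2$.)

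\textbf{Disjointness plus small $I(A;B)$ should force an almost-rectangle.} The aim is to show that the supports of $A$ and $B$ almost respect a \emph{fixed} bipartition of $[n]$: I would try to produce a partition $[n]=U\sqcup W$ and random ``exceptional'' sets $A^{\mathrm{exc}}\subseteq A$, $B^{\mathrm{exc}}\subseteq B$ with $\E|A^{\mathrm{exc}}|+\E|B^{\mathrm{exc}}|=O(t)$ such that $A\setminus A^{\mathrm{exc}}\subseteq U$ and $B\setminus B^{\mathrm{exc}}\subseteq W$. Granting this, the proof concludes in one line: conditioned on $A\cap U$, the set $A$ is determined by the subset $A^{\mathrm{exc}}$ of $[n]$, so $H(A)\le|U|+\E\log\binom{n}{|A^{\mathrm{exc}}|}\le|U|+O\!\big(t\log(\tfrac nt+2)\big)$ by concavity of $x\mapsto\log\binom{n}{x}$, and symmetrically $H(B)\le|W|+O\!\big(t\log(\tfrac nt+2)\big)$; adding and using $H(A)+H(B)\ge\log D=2\log m-t$ gives the claimed inequality. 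The natural choice is $U:=\{i:\Pr[i\in B]\le\theta\}$ with $\theta\asymp t/n$ and $W:=[n]\setminus U$: then $\E|B\cap U|=\sum_{i\in U}\Pr[i\in B]\le\theta|U|\le t$ automatically, so $B^{\mathrm{exc}}:=B\cap U$ works, and everything reduces to bounding $\E|A\cap W|=\sum_{i\in W}\Pr[i\in A]$, i.e.\ to showing that $A$ barely meets the coordinates where $B$ has non-negligible density.

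\textbf{The main obstacle.} Controlling $\sum_{i\in W}\Pr[i\in A]$ is the whole difficulty, and it must genuinely use $I(A;B)\le t$, since a coordinate on which $\Pr[i\in A]$ and $\Pr[i\in B]$ are both bounded below carries positive \emph{local} mutual information (as $(\mathbf{1}_{i\in A},\mathbf{1}_{i\in B})$ never equals $(1,1)$), and one wants to conclude there are few such coordinates. The trap is that mutual information is \emph{not} superadditive over coordinates: the chain rule only yields $\sum_i I(\mathbf{1}_{i\in A};\mathbf{1}_{i\in B})=I(A;B)+D_A+D_B-D_{AB}$, where the internal-dependence terms $D_A,D_B$ of $A$ and $B$ can be of order $n$. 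Closing this gap is the technical heart. I would attack it via a random-restriction/sampling argument combined with the high-entropy hypotheses: pass from $[n]$ to a random subset $S$, condition on the trace of $(A,B)$ on $[n]\setminus S$ so that inside $S$ the two families behave product-like, run the local analysis there, and reassemble by averaging over $S$ --- equivalently, by induction on the ground set, writing $D=D(\cA_0,\cB_0)+D(\cA_0,\cB_1)+D(\cA_1,\cB_0)$ according to membership of one coordinate and tracking how the deficiency $t$ splits among the three sub-instances. This is also where the argument meets the disjoint-rectangle covering results underlying the other parts of the paper. Keeping the quantitative loss at $t\log(n/t)$, rather than the $t^2$ or $\sqrt t$ that lossier treatments give, is precisely what separates the tight exponent $\delta/\log(1/\delta)$ from weaker bounds, and forces a careful accounting of the two-sided coordinates.
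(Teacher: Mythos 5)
Your entropy reformulation in the first paragraph is correct and clean, and it is a genuinely different angle from either of the paper's two proofs (which use, respectively, an iterative chain-building/cleaning argument in the disjointness graph, and a covering lemma lower-bounding the expected union size of $k$ random sets via subadditivity of entropy and a pointwise inequality $H(p)\le 1-(1-p)^k+C2^{-k}$; neither analyses the mutual information of a random disjoint pair). Your ``one-line'' conclusion from the proposed partition lemma is also essentially right modulo an absorbable $\log n$ term in $H(A^{\mathrm{exc}})$, and the arithmetic at the end (solving $\delta n\lesssim t\log(n/t)$ for $t$) does give the claimed exponent.

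However, what you present is not a proof. The entire difficulty is concentrated in the step you yourself label ``the main obstacle'': proving that $\sum_{i\in W}\Pr[i\in A]=O(t)$ from $I(A;B)\le t$ and the high-entropy hypotheses. You correctly diagnose why the naive chain-rule attack fails (mutual information is not superadditive over coordinates, and the internal-dependence correction terms can be of order $n$), but you then only gesture at a remedy --- ``a random-restriction/sampling argument,'' ``induction on the ground set, tracking how the deficiency splits'' --- without carrying it out. There is no estimate, no concrete decomposition, and no indication of how the restriction would avoid reintroducing the order-$n$ loss. It is not even clear that the intermediate lemma holds in the exact form stated; at minimum, one would need to check it against Construction~1 and against small perturbations. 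So this is a promising reduction and a correct identification of the technical crux, but the crux is left open, and without it the argument does not establish Theorem~\ref{thm:coarse erdos daykin}. If you want to pursue this, I would recommend first trying to prove a weaker version of your partition lemma with loss $t^2$ or $t\sqrt{n}$ (which you note would already recover Alon--Frankl), since getting any unconditional bound this way would indicate whether the mutual-information framing is viable; the paper's Lemma~\ref{lemma:entropy expected union} plus the dependent random choice step in Theorem~\ref{thm:entropy daykin erdos} may also suggest how to sidestep the superadditivity problem by working with several i.i.d.\ copies of $A$ rather than a single coupled pair $(A,B)$.
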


It is often more natural to consider this problem in the bipartite setting. That is, instead of a single family $\cF$, we look at disjoint pairs $(A,B)\in \cA\times \cB$, where $\cA$ and $\cB$ might be different set systems. Considering the families $\cA$ and $\cB$ described in Construction 1, Alon and Frankl \cite{AF85} actually made the following more precise conjecture. If $\cA, \cB\subset 2^{[n]}$ are of size $n^{\omega(1)}2^{n/2}$, then there cannot be $\Omega(|\cA||\cB|)$  disjoint pairs $(A,B)\in \cA\times \cB$.

In 2015, Alon, Das, Glebov and Sudakov \cite{ADGS15} proved this conjecture and showed that for any two families $\cA, \cB\subseteq 2^{[n]}$ with $|\cA||\cB|=\Theta_d(n^{2d}2^n)$, one has at most $2^{-d/150}|\cA||\cB|$ disjoint pairs. However, the problem of determining the maximal density of disjoint pairs between two families $\cA, \cB$ of size $m=\Omega_d(n^d 2^{n/2})$ was not settled. We resolve this problem completely by showing that two families of size $m=\Omega_d(2^{n/2}n^d)$ cannot have more than $(1+o(1))2^{-2d}|\cA||\cB|$ disjoint pairs, which is optimal up to the $o(1)$ term by considering the families $\cA$ and $\cB$ described above.

\begin{theorem}\label{thm:fine-grained daykin erdos}
Let $d>0$ be fixed and let $c_d> 0$ an arbitrary constant depending only on $d$. If $\cA, \cB\subseteq 2^{[n]}$ are set families of size $|\cA||\cB|\geq c_d 2^n n^{2d}$, then the number of disjoint pairs in $\cA\times\cB$ is at most $(1+o(1))2^{-2d}|\cA||\cB|$, where $o(1)\to 0$ as $n\to \infty$.
\end{theorem}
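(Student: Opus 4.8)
\medskip\noindent\textbf{Proof idea.}
Write $D=D(\cA,\cB)$ for the number of disjoint pairs and $\rho(\cA,\cB)=D/(|\cA||\cB|)=\Pr_{A\sim\cA,B\sim\cB}[A\cap B=\emptyset]$. A useful reformulation of the goal: if $(A,B)$ is a uniformly random disjoint pair then $\log_2 D=H(A,B)=H(A)+H(B)-I(A;B)\le\log_2|\cA|+\log_2|\cB|-I(A;B)$, so it is enough to prove that a typical disjoint pair has mutual information $I(A;B)\ge 2d-o(1)$. The engine of the argument is that disjointness factors over a partition $[n]=X\sqcup Y$: writing $\pi_X$ for projection onto $X$ and $\cA_{X,S}=\{A\cap Y:A\in\cA,\ A\cap X=S\}\subseteq 2^Y$ (similarly for $\cB$), one has
\[
D(\cA,\cB)=\sum_{\substack{S\in\pi_X(\cA),\ S'\in\pi_X(\cB)\\ S\cap S'=\emptyset}} D_Y\big(\cA_{X,S},\cB_{X,S'}\big),
\]
since a pair is disjoint precisely when its restrictions to $X$ and to $Y$ are both disjoint. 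It is convenient to measure a pair of families $(\cC,\cR)$ on a ground set of size $m$ by its \emph{level} $\lambda$, where $|\cC||\cR|=2^mm^{2\lambda}$; the target for such a pair is $\rho(\cC,\cR)\le(1+o(1))2^{-2\lambda}$ (vacuous when $\lambda\le 0$), and one also keeps the strengthening $\rho\le 2^{-\Omega(m)}$ coming from Theorem~\ref{thm:coarse erdos daykin} (bipartite form) once a family exceeds the threshold $2^{m/2}$ by much.

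The plan is to take $X,Y$ a balanced split and iterate the displayed identity, so that the ground set halves at each step and the recursion stops after $O(\log d)$ rounds. The key bookkeeping: since $|\pi_X(\cA)|,|\pi_Y(\cA)|\le 2^{|X|}$ and $|\cA|\le|\pi_X(\cA)|\,|\pi_Y(\cA)|$ (and likewise for $\cB$), the levels of the $X$- and $Y$-sides add to at least the level $d$ of $(\cA,\cB)$, up to $o(1)$. In a balanced situation each side then has level $\ge d/2-o(1)$, and bounding each factor $D_Y(\cA_{X,S},\cB_{X,S'})$ recursively gives, through the identity, density $2^{-2(d/2)}\cdot 2^{-2(d/2)}(1+o(1))=2^{-2d}(1+o(1))$; in an unbalanced situation one side carries negligible level (density $\le 1$) while the other retains level $\ge d-o(1)$, giving the same. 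The base cases — a side of level below a fixed small constant, or a family falling below $2^{m/2}$ — are closed by Theorem~\ref{thm:coarse erdos daykin} and the trivial bound $\rho\le 1$. Since $d$ is fixed the recursion has bounded depth, so the $o(1)$ errors and the degradation of the constant $c_d$ compound only $O(\log d)$ times and stay $o(1)$. Optimality is witnessed by Construction~1: there $|\cA||\cB|=\Theta_d(2^nn^{2d})$, and a direct count — the first-half part of $\cA$, a family of size $\binom{n/2}{\le d}$, meets the free second-factor family in density $(1+o(1))2^{-d}$, and symmetrically on the other half — produces $(1+o(1))2^{-2d}|\cA||\cB|$ disjoint pairs.

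The main obstacle is that the displayed identity is not literally a product $D_X\cdot D_Y$: the summand depends on the fibre sizes $|\cA_{X,S}|,|\cB_{X,S'}|$, which fluctuate with $(S,S')$, and the naive fix of replacing every fibre by the full projection $\pi_Y(\cA)$ — which \emph{does} produce a product — inflates $|\cA|$ to $|\pi_X(\cA)|\,|\pi_Y(\cA)|$ and destroys polynomial factors, which is fatal since the statement is sharp up to $1+o(1)$. Controlling the fibre-size distribution is precisely where the probabilistic and entropy methods come in: one must show that for a random balanced split the fibre profile concentrates around the one realised by Construction~1 (most fibres of size $\Theta(2^{n/2})$, with a thin exceptional part carrying the $2^{-d}$ loss on each half), so that the atypical fibres contribute a $o(1)$-fraction and the sum over $(S,S')$ is, up to $1+o(1)$, the product over the two halves — equivalently, that after discarding a $o(1)$-fraction of each family (using structural information from the proof of Theorem~\ref{thm:coarse erdos daykin}) there is a single balanced split along which $\cA$ and $\cB$ are \emph{simultaneously} within a $1+o(1)$ factor of products. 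Making this concentration strong enough to survive the recursion, and in particular making the base case sharp rather than merely qualitative, is the heart of the matter.
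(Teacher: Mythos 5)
Your approach — recursively decomposing disjointness over a balanced split $[n]=X\sqcup Y$ — is genuinely different from the paper's, which does not recurse at all: the paper proves an intermediate quantitative bound (Theorem~\ref{thm:entropy daykin erdos}) in one shot by a dependent-random-choice argument, where the sharp constant comes from an entropy inequality bounding the expected size of the union of $k$ random sets from a large family (Lemma~\ref{lemma:entropy expected union}), and then derives Theorem~\ref{thm:fine-grained daykin erdos} by a careful choice of parameters $\theta=(2d-\eps)/n$, $\eps=C_d/\sqrt{\log_2 n}$. That route never needs the fibre-wise product structure you are after.

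The gap in your proposal is the one you name yourself, and it is not a loose end — it is the whole theorem. The identity
\[
D(\cA,\cB)=\sum_{\substack{S\cap S'=\emptyset}} D_Y\bigl(\cA_{X,S},\cB_{X,S'}\bigr)
\]
is correct, but turning this sum into a product $D_X\cdot D_Y$ up to a $(1+o(1))$ factor is exactly what would make the bound sharp, and you give no argument for it beyond the hope that ``for a random balanced split the fibre profile concentrates.'' Nothing you write constrains the fibre sizes: an adversarial family can have $|\cA_{X,S}|$ fluctuating by exponential factors across $S$, and the submultiplicative bookkeeping $|\cA|\le|\pi_X(\cA)||\pi_Y(\cA)|$ you cite only goes the wrong way for this purpose (it controls the total, not the distribution across fibres). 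You would essentially have to prove that, after discarding $o(1)$ of the mass, $\cA$ and $\cB$ look like products across the same split — but that regularity property is not supplied by Theorem~\ref{thm:coarse erdos daykin}, whose proof does not produce such structure. A second, independent problem is the base case: stopping when the level falls below a fixed small constant $\eps_0$ and invoking $\rho\le 1$ introduces a multiplicative loss of $2^{2\eps_0}$, which is a fixed constant, not $1+o(1)$; compounding such losses over even $O(1)$ rounds would already destroy the $(1+o(1))2^{-2d}$ conclusion. You also cannot fall back on Theorem~\ref{thm:coarse erdos daykin} for a sharp base case, since it gives only a density bound of the form $2^{-c\delta n/\log(1/\delta)}$, which for level-$d$ families on a small ground set is not $(1+o(1))2^{-2d}$. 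In short, the reformulation via mutual information and the recursive scheme are attractive as intuition, but the proposal does not contain an argument that closes the multiplicative gap, and the recursion's base and round-bookkeeping both leak constant factors that the statement cannot afford.
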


\noindent
Note that this result differs by a factor $2$ from Construction 1, because it is stated in the bipartite setting. However, the non-bipartite statement can easily be reduced to the bipartite one, using Szemer\'edi's regularity lemma. Therefore, one can deduce from the above theorem that families of size $|\cF|=c_d n^{d} 2^{n/2}$  have at most $(1+o(1))2^{-2d-1}\binom{|\cF|}{2}$ disjoint pairs, matching Construction 1. We give the sketch of this reduction following the proof of Theorem~\ref{thm:fine-grained daykin erdos} in Section~\ref{sect:daykin-erdos}.

Having determined the maximum possible density of disjoint pairs quite precisely, it is natural to ask whether these results can be extended to other intersection sizes. Therefore, we propose to study the following problem.

\begin{problem}
Given positive integers $m,n,\lambda$, determine/estimate the maximum number of pairs of sets $(A,B)\in \cF^{2}$ such that $|A\cap B|=\lambda$ in a set system $\cF\subset 2^{[n]}$ of size $m$.
\end{problem}

This problem turns out to be substantially different from the case of disjoint pairs, and the same techniques do not seem to apply anymore. We use tools from additive combinatorics to prove the following, which is the natural generalization of the aforementioned conjecture of Daykin and Erd\H{o}s.

\begin{theorem}\label{thm:daykin erdos nonzero intersection}
Let $c>0$ be a fixed constant and let $n\geq \lambda>0$ be positive integers. Let $\mathcal{A}, \mathcal{B}\subseteq 2^{[n]}$ be families of size $m$ such that at least $cm^2$ pairs $(A, B)\in \mathcal{A}\times \mathcal{B}$ satisfy $|A\cap B|=\lambda$. Then $$m\leq 2^{(1/2+o(1))n}.$$
\end{theorem}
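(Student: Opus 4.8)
The plan is to reduce to the disjoint case $\lambda=0$, which is exactly (the bipartite form of) Theorem~\ref{thm:fine-grained daykin erdos}, equivalently the Alon--Frankl bound. The key mechanism is that if a constant fraction of the ``good'' pairs $(A,B)$ (those with $|A\cap B|=\lambda$) have the \emph{same} set $C$ as their intersection, then deleting $C$ and keeping only the sets containing it turns these pairs into disjoint pairs over a ground set of size $n-\lambda$, so Theorem~\ref{thm:fine-grained daykin erdos} applies. Thus the whole difficulty is how spread out the family of intersections $\{A\cap B:|A\cap B|=\lambda\}$ can be.

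I would first dispose of small $\lambda$ by direct pigeonhole. Among the $\ge cm^2$ good pairs, some fixed $\lambda$-set $C$ is the intersection of at least $cm^2/\binom n\lambda$ of them; for each such pair $A\supseteq C$, $B\supseteq C$, and $A\setminus C$ is disjoint from $B\setminus C$. Deleting $C$ from $\cA_C=\{A\in\cA:A\supseteq C\}$ and $\cB_C=\{B\in\cB:B\supseteq C\}$ yields families on $[n]\setminus C$ with $\ge cm^2/\binom n\lambda$ disjoint pairs and with $|\cA_C|,|\cB_C|\ge cm/\binom n\lambda$ (this product bounds the number of disjoint pairs and each factor is $\le m$). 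Since the density loss $\binom n\lambda$ is $2^{o(n)}$ whenever $\lambda\le n/\log^2 n$, say, the quantitative Daykin--Erd\H os bound (Theorems~\ref{thm:coarse erdos daykin}--\ref{thm:fine-grained daykin erdos}) on the ground set $[n]\setminus C$ gives $|\cA_C|,|\cB_C|\le 2^{(1/2+o(1))(n-\lambda)}$, hence $m\le 2^{(1/2+o(1))n}$ in this range. (Complementing one or both of $\cA,\cB$ replaces $\lambda$ by another admissible value, and once the sizes $|A|\equiv s$, $|B|\equiv t$ are fixed the four reachable values of $\lambda$ sum to $n$, so $\lambda\le n/4$ may be assumed; but this does not bring $\lambda$ down to $o(n)$.)

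The main case --- and the step I expect to be the genuine obstacle --- is $\lambda=\Theta(n)$, where $\binom n\lambda=2^{\Theta(n)}$ and the crude pigeonhole is useless. Here I would dichotomize on the distribution of the common intersection over good pairs. If it is concentrated --- say there is a set $S$ of $\lambda$ elements, each contained in $A\cap B$ for all but a $\tfrac1{2\lambda}$-fraction of the good pairs, so $A\cap B=S$ for at least half of them --- then peeling $S$ at once costs only a constant factor and reduces to the disjoint case on $[n-\lambda]$. The hard scenario is when the intersection is genuinely spread out (all single-element marginals bounded away from $1$). Then one must pass to characteristic vectors, where $|A\cap B|=\lambda$ becomes the bilinear equation $\langle\mathbf 1_A,\mathbf 1_B\rangle=\lambda$; I would use additive-combinatorial structure theory (Balog--Szemer\'edi--Gowers / Freiman-type statements, or an arithmetic-regularity or density-increment scheme) on the two families of characteristic vectors to extract large subfamilies $\cR\subseteq\cA$, $\cS\subseteq\cB$ trapped in low-dimensional affine subspaces (covered by few translates of a subspace), on which the intersection relation is rigid. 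A rank argument then finishes: fixing $R_0\in\cR$, every $\mathbf 1_R-\mathbf 1_{R_0}$ is orthogonal to $\operatorname{span}\{\mathbf 1_S:S\in\cS\}$, so $\dim\operatorname{span}\{\mathbf 1_R:R\in\cR\}+\dim\operatorname{span}\{\mathbf 1_S:S\in\cS\}\le n+O(1)$, and since a $d$-dimensional affine subspace of $\bR^n$ contains at most $2^d$ points of $\{0,1\}^n$, this gives $|\cR|\,|\cS|\le 2^{n+O(1)}$ and hence $m\le 2^{(1/2+o(1))n}$.

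The delicate point throughout the last case is to lose only a $2^{o(n)}$ factor when passing to the structured subfamily, so that the constant-density hypothesis is used in an essential way: one cannot simply reduce to fixed set sizes and polynomial density, since a random subfamily of $\binom{[n]}{n/2}$ of size $2^{0.9n}$ still has a $\Theta(n^{-1/2})$ fraction of its pairs at intersection $n/4$, so a polynomial-density version of the statement is false. Making the additive-combinatorial structure theorem interface with the linear-algebraic rank bound with only subexponential loss is where I expect the bulk of the work to lie.
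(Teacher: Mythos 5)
Your end‑game (the rank/orthogonality argument and the bound that a $d$‑dimensional affine subspace of $\bR^n$ meets $\{0,1\}^n$ in at most $2^d$ points, i.e.\ Sgall's $|\cR||\cS|\le 2^n$) matches the paper's conclusion, and you are right that the whole weight of the problem lies in extracting, from the bilinear condition, subfamilies $\cR,\cS$ on which the intersection size is rigid with only $2^{o(n)}$ loss. But the middle step of your sketch is a genuine gap, not just ``the bulk of the work.'' Appealing to Balog--Szemer\'edi--Gowers or Freiman over the integers/reals does not apply to the constraint $\langle\mathbf 1_A,\mathbf 1_B\rangle=\lambda$: those tools structure sets with small sumset, whereas here you have a bipartite bilinear incidence condition, and there is no sumset to speak of. Your concentrated/spread dichotomy does not help either, because in the spread case the tool you would reach for still is not BSG/Freiman.

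What actually makes the argument go through, and what the paper does, is a reduction modulo a \emph{small prime}. One picks a prime $p\in[2/c^2,4/c^2]$, passes to characteristic vectors in $\bF_p^n$, and notes that ``$|A\cap B|=\lambda$ for $\ge c m^2$ pairs'' implies $\Pb[\langle a,b\rangle\equiv\lambda\pmod p]\ge c$. By Parseval applied to the distribution $f(x)=\Pb[\langle a,b\rangle=x]$ on $\bF_p$, the choice $p\ge 2/c^2$ forces some nontrivial Fourier coefficient $\hat f(k)$ to be $\ge 1/\sqrt p$, i.e.\ a large bias $D_\omega(A,B)$ for some root of unity $\omega\ne 1$. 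One then invokes the Approximate Duality theorem of Ben‑Sasson--Lovett--Ron‑Zewi (extended to all primes by Bhowmick--Dvir--Lovett), which \emph{is} the additive‑combinatorics input --- it is proved using Polynomial Freiman--Ruzsa, now a theorem of Gowers--Green--Manners--Tao --- and which produces $A'\subseteq A$, $B'\subseteq B$ with $\langle a,b\rangle$ constant on $A'\times B'$ and $|A'||B'|\ge 2^{-O(n/\log n)}|A||B|$. Sgall's bound (which holds for intersections constant mod $p$, not only over $\bZ$) then gives $|A'||B'|\le 2^n$, hence $m\le 2^{(1/2+o(1))n}$. So the missing idea in your write‑up is precisely this modular reduction to a small‑characteristic setting where the ADC/PFR machinery applies; once you have it, neither the small‑$\lambda$ pigeonhole nor the concentration dichotomy is needed, since the reduction handles all $\lambda$ uniformly.

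Two smaller remarks. Your pigeonhole case only covers $\lambda$ with $\binom{n}{\lambda}=2^{o(n)}$ (roughly $\lambda=o(n/\log n)$), and, as you observe, complementing only pushes $\lambda$ down to $n/4$, so the ``main case'' you flag really is the whole problem. And in the rank step, after applying an ADC‑type theorem over $\bF_p$, the intersection size is constant only modulo $p$; your real‑orthogonality argument as written would not apply, and one needs Sgall's mod‑$p$ version directly (the paper cites Corollary~3.5 of \cite{Sgall} for exactly this).
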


\noindent
Beyond $\lambda=0$, this result is tight for various other $\lambda$. For example, for $\lambda=n/4$, split the ground set $[n]$ into $n/2$ pairs, take $\cA$ to be the family of all sets containing $n/4$ pairs, and take $\cB$ to be the family of all sets containing one element from every pair.

The proof of Theorem~\ref{thm:daykin erdos nonzero intersection} relies fundamentally on a connection with a problem from additive combinatorics, called the Approximate Duality Conjecture. This conjecture was originally proposed by Ben-Sasson and Ron-Zewi  \cite{BSRZ15} as an approach to the log-rank conjecture. We discuss the log-rank conjecture in more detail in the next section. The Approximate Duality Conjecture states if $M$ is an $m\times m$ matrix of rank $n$ over $\mathbb{F}_2$ with at least $(1/2+\delta)m^2$ equal entries, then one can find a constant submatrix of $M$ of size $\exp(-O(\sqrt{n\log \delta^{-1}}))m$ (i.e. a square submatrix with sides of length $\exp(-O(\sqrt{n\log \delta^{-1}}))m$).
While this conjecture remains open, partial progress towards it was made by Ben-Sasson, Lovett and Ron-Zewi \cite{BSLRZ14}. They show the existence of a constant square submatrix of size $\exp(-O(n/\log n))m$, conditionally on the Polynomial Freiman-Ruzsa conjecture. 

The Polynomial Freiman-Ruzsa conjecture was, until recently, a central open problem in additive combinatorics, stating that if the set $A\subseteq \bF_2^n$ satisfies $|A+A|=\big|\{a_1+a_2|a_1, a_2\in A\}\big|\leq K|A|$, then $A$ can be covered by $K^{O(1)}$ translates of a subgroup $H\subseteq \bF_2^n$ of size $|H|\leq |A|$. This conjecture was proven by Gowers, Green, Manners and Tao \cite{GGMT23} (see also \cite{GGMT24}), and therefore the proof of Ben-Sasson, Lovett and Ron-Zewi applies unconditionally. This,  in turn allows us to use it for the proof of Theorem~\ref{thm:daykin erdos nonzero intersection}.

Finally, motivated by both  Theorem \ref{thm:daykin erdos nonzero intersection} and the Approximate Duality Conjecture, we give an elementary proof of stronger bounds for set systems with many even (or odd) intersections.

\begin{theorem}\label{thm:even}
 Let $\delta\in [0,1/2]$, and let $\cA,\cB\subset 2^{[n]}$ such that at least $(1/2+\delta)|\cA||\cB|$ pairs of sets $(A,B)\in \cA\times \cB$ satisfy that $|A\cap B|$ is even. Then $|\cA||\cB|\leq 2^{n}/(4\delta^2)$. The same result remains true if even is replaced by odd.
\end{theorem}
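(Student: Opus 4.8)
The plan is to reinterpret the parity condition as a bilinear form over $\bF_2$ and then apply a sharp spectral (discrepancy) estimate coming from the Sylvester--Hadamard matrix. Identify each set $A\subseteq[n]$ with its indicator vector $a\in\bF_2^n$. For sets $A,B$ with indicator vectors $a,b$, the parity of $|A\cap B|$ is exactly the $\bF_2$-inner product $\langle a,b\rangle=\sum_{i=1}^n a_ib_i \bmod 2$; in particular $|A\cap B|$ is even iff $\langle a,b\rangle=0$. Let $H$ be the $2^n\times 2^n$ matrix with rows and columns indexed by $\bF_2^n$ and entries $H_{x,y}=(-1)^{\langle x,y\rangle}$. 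A direct computation shows $HH^{\top}=2^nI$, so every singular value of $H$ equals $2^{n/2}$, and the operator ($\ell^2\to\ell^2$) norm of $H$ is $2^{n/2}$. The same bound holds for every submatrix of $H$ obtained by selecting a subset of the rows and a subset of the columns, since such a restriction is of the form $R H C^{\top}$ with $R,C$ having orthonormal rows, and hence cannot increase the operator norm.

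Now let $M$ be the $|\cA|\times|\cB|$ submatrix of $H$ with rows indexed by the indicator vectors of the sets in $\cA$ and columns indexed by those in $\cB$, so that $M_{A,B}=(-1)^{|A\cap B|}$. Writing $\mathbf{1}_{\cA}$ and $\mathbf{1}_{\cB}$ for the all-ones vectors of the appropriate dimensions, the quantity $\mathbf{1}_{\cA}^{\top}M\,\mathbf{1}_{\cB}$ equals (number of pairs with $|A\cap B|$ even) minus (number of pairs with $|A\cap B|$ odd). Under the hypothesis — whether it is the even or the odd version — at least $(1/2+\delta)|\cA||\cB|$ of the $|\cA||\cB|$ pairs lie on one side, so $|\mathbf{1}_{\cA}^{\top}M\,\mathbf{1}_{\cB}|\geq 2\delta|\cA||\cB|$. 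On the other hand, Cauchy--Schwarz together with the operator norm bound from the previous paragraph gives $|\mathbf{1}_{\cA}^{\top}M\,\mathbf{1}_{\cB}|\leq\|\mathbf{1}_{\cA}\|_2\cdot\|M\|_{\mathrm{op}}\cdot\|\mathbf{1}_{\cB}\|_2\leq\sqrt{|\cA|}\cdot 2^{n/2}\cdot\sqrt{|\cB|}$. Combining the two inequalities yields $2\delta|\cA||\cB|\leq 2^{n/2}\sqrt{|\cA||\cB|}$, that is, $|\cA||\cB|\leq 2^n/(4\delta^2)$, and the odd case is covered by the same computation because only the absolute value of $\mathbf{1}_{\cA}^{\top}M\,\mathbf{1}_{\cB}$ was used. (Equivalently, one can phrase everything in Fourier language: $\mathbf{1}_{\cA}^{\top}M\,\mathbf{1}_{\cB}=2^n\sum_{\xi\in\bF_2^n}\widehat{\mathbf{1}_{\cA}}(\xi)\widehat{\mathbf{1}_{\cB}}(\xi)$, and then apply Cauchy--Schwarz and Parseval; this is the same estimate.)

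I do not expect a genuine obstacle here — the argument is short. The one point that needs care is extracting the exact constant $4\delta^2$ rather than something lossier: this relies on the sharp identity $HH^{\top}=2^nI$ (so the singular values are exactly $2^{n/2}$, not merely $O(2^{n/2})$), on the elementary observation that passing to a submatrix does not increase the operator norm, and on the fact that $\|\mathbf{1}_{\cA}\|_2^2=|\cA|$ and $\|\mathbf{1}_{\cB}\|_2^2=|\cB|$, so that no slack is introduced at any step. The conceptual content is simply recognizing that "parity of intersection" is a bilinear form over $\bF_2$ and that the associated $\{\pm1\}$ matrix is a scaled orthogonal matrix, hence every submatrix is an excellent expander in the discrepancy sense.
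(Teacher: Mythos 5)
Your proof is correct and is essentially the paper's argument, just written in matrix/operator-norm language: the paper works with the characteristic functions $f,g$ of $\cA,\cB$, computes $\langle\hat f,g\rangle$ as the even-minus-odd count, and applies Cauchy--Schwarz plus Parseval, which is exactly your $\mathbf{1}_{\cA}^{\top}M\,\mathbf{1}_{\cB}$ bound via the Hadamard matrix, as you yourself observe in the final parenthetical.
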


\subsection{Complete bipartite graphs in disjointness graphs}

We now turn our attention to a closely related extremal problem about dense disjointness graphs of set families, namely characterizing the size of the largest biclique they contain. Given two set families $\cA$ and $\cB$ with $N$ disjoint pairs $(A,B)\in \cA\times \cB$,  let $d(\cA,\cB)=\frac{N}{|\cA||\cB|}$ denote the density of disjoint pairs. We are interested in the following problem, proposed by Singer and Sudan \cite{SS22}.

\begin{problem}\label{problem:SS}
Let $\cA, \cB\subseteq 2^{[n]}$ satisfying $d(\cA, \cB)\geq \delta$, then how large families $\cA'\subseteq \cA, \cB'\subseteq \cB$ can we find with the property that every pair of sets $(A,B)\in \cA'\times \cB'$ are disjoint? 
\end{problem}

There are two different motivations for studying this problem. The first one comes from Extremal Graph Theory, where one is often interested in determining the minimal number of edges which are sufficient to guarantee a certain structure in a graph. Zarankiewicz's problem is one of the long-standing open problems in the field asking to find the maximum number of edges in a bipartite graph $G$ with parts of size $m$ and $n$, which does not contain a complete bipartite graph with parts of size $s$ and $t$.  In the past two decades, the study of Zarankiewicz type problems in which the host graph $G$ is assumed to satisfy  certain structural, algebraic, or geometric conditions, gained a lot of attention. We refer the interested reader to the recent survey of Smorodinsky \cite{zara_survey} for the geometric setting, or the manuscript of the authors of this paper \cite{HMST} for an overview of the structural setting. Considering graphs which arise as the disjointness graph of set systems fits well into this area.

On the other hand, Problem \ref{problem:SS} is connected to the problem of finding monochromatic rectangles in low-rank matrices. The main question in this area is the well-known log-rank conjecture of Lov\'asz and Saks \cite{LS88}, which is one of the fundamental open problems in communication complexity. Its combinatorial formulation asks to show that  each binary matrix $M$ of size $n\times n$ and rank $r$ contains an all-zero or all-one submatrix of size at least $2^{-(\log r)^{O(1)}}n$. Such submatrices are called \emph{monochromatic} or \emph{constant}, and they do not have to consist of consecutive rows/columns. While the log-rank conjecture is still outside the reach of known techniques, Lovett \cite{L16} showed  that one can always find a monochromatic submatrix of size $2^{-O(\sqrt{r}\log r)}n$. Recently, this was slightly improved by Sudakov and Tomon \cite{ST24}, who removed the logarithmic factor from the exponent.

In the same paper \cite{L16}, Lovett proposed to study the size of all-zero rectangles in low-rank sparse real matrices, stating the following conjecture.

\begin{conjecture}[\cite{L16}]\label{conj:lovett}
Let $M$ be an $n \times n$ real matrix with ${\rm rank}(M) = r$ such that at most $\eps n^2$ entries of $M$ are not zero, where $\eps\in (0,1/2)$. Then $M$ contains  an all-zero square submatrix of size at least $ n \cdot \exp(-O(\sqrt{\eps r})).$
\end{conjecture}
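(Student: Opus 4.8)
The plan is to prove Conjecture~\ref{conj:lovett} by reducing it to a ``cleaning lemma'' for low-rank matrices and attacking that lemma by an iterative subspace-refinement argument, in the spirit of Lovett's proof~\cite{L16} of the $2^{-O(\sqrt{r}\log r)}n$ bound for the log-rank conjecture and of the refinement of Sudakov and Tomon~\cite{ST24}, but using the sparsity hypothesis to cut the number of refinement steps down to $O(\sqrt{r/\eps})$ and thereby obtain the sharper exponent $\sqrt{\eps r}$. Write $M=UV^{\top}$ with $U,V\in\bR^{n\times r}$, let $u_1,\dots,u_n\in\bR^r$ be the rows of $U$ and $v_1,\dots,v_n\in\bR^r$ those of $V$, so that $M_{ij}=\langle u_i,v_j\rangle$. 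An all-zero $a\times b$ submatrix of $M$ is exactly a pair of sets $I,J\subseteq[n]$ with $|I|\geq a$, $|J|\geq b$ and $\spn\{u_i:i\in I\}\perp\spn\{v_j:j\in J\}$, equivalently a subspace $L\leq\bR^r$ with $|\{i:u_i\in L\}|\geq a$ and $|\{j:v_j\in L^{\perp}\}|\geq b$. So it suffices to produce such an $L$ with $a,b\geq n\cdot\exp(-O(\sqrt{\eps r}))$. As a preprocessing step I would discard the at most $n/2$ rows and at most $n/2$ columns of support larger than $2\eps n$; on the surviving $(n/2)\times(n/2)$ submatrix, which still has rank at most $r$, every $u_i$ is orthogonal to all but at most $O(\eps n)$ of the $v_j$, and vice versa. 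In this language the conjecture becomes a cleaning statement: a rank-$r$ matrix that is $O(\eps)$-close to the all-zero matrix (in the fraction of nonzero entries) contains a genuine all-zero submatrix of relative size $\exp(-O(\sqrt{\eps r}))$.

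To prove the cleaning statement I would build a strictly decreasing chain of subspaces $\bR^r=L_0\supsetneq L_1\supsetneq\cdots$, where at step $t$ one picks a column index $j_t$ — ideally by a dependent-random-choice weighting, but a uniformly random choice among columns whose zero-set meets the current row-set in many rows should already work — and sets $L_{t+1}:=L_t\cap v_{j_t}^{\perp}$, a hyperplane of $L_t$. Tracking $I_t:=\{i:u_i\in L_t\}$ and $J_t:=\{j:v_j\in L_t^{\perp}\}$, the set $I_t$ only shrinks from $I_0=[n]$ and $J_t$ only grows toward $[n]$ as $t$ increases; the sparsity obtained in preprocessing guarantees that a good choice of $j_t$ keeps $|I_{t+1}|\geq(1-O(\eps))|I_t|$ as long as $J_t$ is not already of the desired size. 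Running this for $T\approx\sqrt{r/\eps}$ steps (which lies in $[1,r]$ throughout the nontrivial range $\eps\geq 1/r$, the case $\eps<1/r$ being immediate since then the all-zero rows and columns of $M$ already form an $\Omega(n)\times\Omega(n)$ all-zero submatrix), one reaches $|I_T|\geq n\,(1-O(\eps))^{\sqrt{r/\eps}}=n\cdot\exp(-O(\sqrt{\eps r}))$, and the goal is to show that by then $J_T$ — the set of columns lying in the $T$-dimensional space $L_T^{\perp}$ — has grown to size $n\cdot\exp(-O(\sqrt{\eps r}))$ as well, at which point one outputs $I:=I_T$, $J:=J_T$. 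Note that $\sqrt{\eps r}=\Theta(\eps\cdot\sqrt{r/\eps})$ is precisely what makes this exponent appear, and that it matches the constructions showing the bound is optimal already for non-negative integer matrices, so whichever argument controls the growth of $J_T$ must be essentially lossless.

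The main obstacle is exactly the \emph{simultaneous} two-sided control just flagged: a naive chain keeps either the row side or the column side large, but not obviously both, since shrinking $L_t$ helps the columns (more of them fall into $L_t^{\perp}$) at the expense of the rows. I expect this has to be resolved either by running the refinement from both ends of a maximal flag at once and meeting in the middle, or by replacing the greedy column choice with a global discrepancy/entropy argument on the bipartite zero-pattern graph of $M$, of the kind used elsewhere in this paper, so that at the stopping time $|I_t|$ and $|J_t|$ are provably comparable. A second, more conceptual difficulty — and the reason Conjecture~\ref{conj:lovett} sits beyond its non-negative counterpart and beyond the set-family biclique problem of Singer and Sudan (Problem~\ref{problem:SS}), which is the cancellation-free case $r\leq n$ with zero patterns coming from genuine disjointness — is \emph{cancellation}: over $\bR$ an entry $\langle u_i,v_j\rangle$ can vanish accidentally, so the zero-pattern set system $\{\{j:M_{ij}=0\}:i\in[n]\}$ is not monotone and does not behave like a disjointness graph. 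One can still control it through the rank, since these are zero-sets of linear forms on $n$ points of $\bR^r$ and hence form a set system with shatter function $O(n^r)$ by the standard hyperplane-arrangement bound; but any use of that bound must be tight, since the crude estimate alone would give only an exponent of order $\eps r$. Combining the subspace refinement with the entropy-compression idea behind the Sudakov--Tomon bound~\cite{ST24}, adapted to exploit the $\eps$-sparsity, seems to me the most promising route to the sharp exponent.
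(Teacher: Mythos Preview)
The statement you are attempting is not proved in the paper: Conjecture~\ref{conj:lovett} is recorded as an open problem. The paper establishes it only in restricted settings --- for intersection matrices of set systems (Theorem~\ref{thm:singer sudan sparse}) and for matrices whose non-negative entries avoid $(0,1)$ (Theorem~\ref{thm:sparse_matrix}) --- and the discussion around Construction~3 explains why the general real case is qualitatively harder.

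Your proposal correctly names the two real obstructions, the simultaneous two-sided control of $|I_t|$ and $|J_t|$ and the cancellation of real inner products, but it does not overcome either. The subspace chain gives $|I_T|\geq n\exp(-O(\sqrt{\eps r}))$ after $T\approx\sqrt{r/\eps}$ steps essentially for free, but $J_T$ consists of those $v_j$ lying in the at-most-$T$-dimensional space $\spn\{v_{j_1},\dots,v_{j_T}\}$, and nothing in the argument forces more than $T$ of the $v_j$ to fall there; your hope that ``by then $J_T$ has grown to size $n\exp(-O(\sqrt{\eps r}))$'' is the content of the conjecture, not a consequence of the steps taken. The suggestions in your last paragraph --- running the chain from both ends, or swapping in a discrepancy/entropy argument --- are directions rather than a proof: the discrepancy method behind Theorem~\ref{thm:sparse_matrix} leans essentially on non-negativity (via Lemma~\ref{lemma:large_F}, which lower-bounds variance by the mean for separated matrices), and you offer no substitute for signed entries. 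A smaller slip: the case $\eps<1/r$ is not handled by ``all-zero rows and columns'' --- a matrix with $n^2/r$ nonzeros need have no zero row --- though the conclusion is salvageable by the rank argument of Lemma~\ref{lemma:low_average}(2).
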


 Observe that zero patterns of rank $r$ matrices represent incidence structures of points and hyperplanes in $\mathbb{R}^{r-1}$. Indeed, given $m$ points $p_1,\dots,p_m\in \mathbb{R}^{r-1}$ and $n$ hyperplanes $H_1,\dots,H_n$ with equations $H_j=\{\langle x, q_j\rangle=a_j\}$, the matrix $M(i,j)=\langle (p_i,1),(q_j,-a_j)\rangle=\langle p_i, q_j\rangle -a_j$ has rank at most $r$, and $M(i, j)=0$ if and only if $p_i\in H_j$. Furthermore, for every rank $r$ matrix $M$ one can find suitable points and hyperplanes in $\mathbb{R}^{r-1}$ realizing $M$. Thus, Conjecture \ref{conj:lovett} is equivalent to the following conjecture about incidence graphs of points and hyperplanes.

 \begin{conjecture}\label{conj:incidence}
 Let $\eps\in (0,1/2)$, then every incidence graph of $n$ points and $n$ hyperplanes in $\mathbb{R}^{d}$ with at least $(1-\varepsilon) n^2$ edges contains a complete balanced bipartite subgraph of size at least $ n \cdot \exp(-O(\sqrt{\eps d})).$
 \end{conjecture}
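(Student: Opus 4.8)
The plan is to pass to the equivalent matrix formulation, Conjecture~\ref{conj:lovett} above, and build a large all-zero submatrix by an iterative rank reduction. Factor $M=UV^{\top}$ with row vectors $u_1,\dots,u_n,v_1,\dots,v_n\in\bR^{r}$, where $r=d+1$, so that $M(i,j)=\langle u_i,v_j\rangle$ and a zero entry records an orthogonal pair; by hypothesis at most $\eps n^2$ pairs are non-orthogonal. An all-zero submatrix supported on $R\times S$ is precisely a pair of index sets with $\spn\{u_i:i\in R\}\perp\spn\{v_j:j\in S\}$. Note first that the rank hypothesis is essential for balance: greedily intersecting the zero-set of one more row produces only a lopsided $\Theta(1/\eps)\times\Theta(n)$ rectangle. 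As a cheap preprocessing step I would delete every row and every column carrying more than $4\eps n$ non-zero entries; fewer than $n/4$ lines of each type are this heavy, so this leaves a sub-rectangle $A\times B$ with $|A|,|B|\ge 3n/4$ in which every line has at most $4\eps n$ non-zeros, at the cost of only a constant factor.

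The core is a win-win recursion on the rank. Given the current rectangle, of rank $\rho$ and with the density of non-zero entries $\eta=O(\eps)$ and every line sparse, I would pick a random set $I$ of $k$ rows, keep the columns $B_I=\{j:\langle u_i,v_j\rangle=0\text{ for all }i\in I\}$ and keep the rows outside $I$. Every surviving $v_j$ then lies in $(\spn\{u_i:i\in I\})^{\perp}$, a subspace whose codimension equals $\dim\spn\{u_i:i\in I\}$; this codimension is $\Omega(\min(k,\rho))$ unless a huge family of the $u_i$'s already lies in a proper subspace, which is the favourable degenerate case and is handled by recursing inside that subspace. Since each column is sparse and $x\mapsto(1-x)^{k}$ is convex, $\E_I|B_I|\ge(1-O(\eps))^{k}n=\exp(-O(\eps k))n$, so a good choice of $I$ retains $\exp(-O(\eps k))n$ columns whose $v_j$'s live in dimension $\rho-\Omega(\min(k,\rho))$; symmetrically peeling $k$ random columns does the same to the $u_i$'s. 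Iterating and telescoping the losses across the scales $\rho,\rho/2,\rho/4,\dots$, one would land at an all-zero rectangle of size $\exp(-O(\sqrt{\eps r}))n$ --- the exponents $\sum_{t}\sqrt{\eps r/2^{t}}$ do sum to $O(\sqrt{\eps r})$, and the base case $\eps r=O(1)$ is immediate --- provided the batch sizes $k$ are tuned to balance the speed of rank reduction against the growth of $\eta$.

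That balancing act, for a general real matrix, is the main obstacle, and it has two intertwined faces. First, one needs a robust dimension-counting lemma: a random $k$-subset of vectors participating in few ``exceptional'' orthogonality incidences must span $\Omega(\min(k,\rho))$ dimensions, and the degenerate configurations violating this must be shown to already contain the desired biclique; I expect this, together with the discrepancy estimate that controls how $\eta$ degrades under each restriction, to be the technically heaviest part. Second, peeling $k$ rows shrinks the column dimension by only $\dim\spn\{u_i:i\in I\}$ while inflating the per-row density roughly by $(1-\eta)^{-k}\approx e^{\eta k}$, so one cannot simply take $k\approx\sqrt{r/\eps}$ in one shot but must make a delicate scale-dependent choice. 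This is precisely the point at which the real case parts ways with the non-negative-integer case treated in the paper: over the integers one can additionally reduce each residue $M\bmod p$ to an $\bF_p$-problem and invoke the Polynomial Freiman-Ruzsa-based bounds of Ben-Sasson, Lovett and Ron-Zewi \cite{BSLRZ14,GGMT23}, recombining the modular information by a Chinese remainder theorem together with a union bound, whereas for arbitrary real entries no such modular handle exists and the geometry must carry the whole argument. Reconciling a fast enough rank drop with a slow enough density increase, so that the telescope lands at $\exp(-O(\sqrt{\eps r}))n$ and not at the weaker $\exp(-O(\eps r))n$, is in my view the crux; granting the dimension and discrepancy lemmas, the remaining bookkeeping and the clean-up are routine.
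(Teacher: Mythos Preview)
The statement you are attempting to prove is labelled a \emph{conjecture} in the paper, not a theorem: Conjecture~\ref{conj:incidence} is the geometric reformulation of Lovett's Conjecture~\ref{conj:lovett}, and the paper explicitly records it as open. The paper proves only the special case of non-negative integer matrices (Theorem~\ref{thm:sparse_matrix}), and the tightness construction $M_2$ shows the bound would be sharp. So there is no ``paper's own proof'' to compare against; what you have written is an outline of an attack on an open problem, and you yourself flag the crucial steps as missing: the ``robust dimension-counting lemma'' and the ``discrepancy estimate that controls how $\eta$ degrades'' are precisely where the difficulty sits, and without them the telescoping only delivers $\exp(-O(\eps r))n$, which is Singer--Sudan's bound and not the conjectured one.

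One concrete correction: your description of how the paper handles the non-negative integer case is inaccurate. Theorem~\ref{thm:sparse_matrix} is \emph{not} proved by reducing modulo primes and invoking the Polynomial Freiman--Ruzsa machinery of \cite{BSLRZ14,GGMT23}. Those tools appear only in the proof of Theorem~\ref{thm:daykin erdos nonzero intersection}, the $|A\cap B|=\lambda$ Daykin--Erd\H{o}s variant. The integer-matrix result is obtained by a completely different route: a cut-norm/discrepancy lower bound (Lemma~\ref{lemma:cut}, via SVD and Grothendieck's inequality) combined with a variance lower bound for separated matrices (Lemma~\ref{lemma:large_F}) drives an iteration (Lemma~\ref{lemma:two cases}) that decreases $p(M)$ until a greedy argument finishes. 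The reason this argument does not extend to arbitrary real matrices is not the absence of a ``modular handle'' but rather the failure of the variance lower bound $q(M)\ge p(M)/100$ once the entries are no longer bounded away from $(0,1)$; that is where non-negativity and separation are genuinely used.
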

  Finding complete bipartite subgraphs in incidence graphs of points and hyperplanes is a celebrated problem of Chazelle \cite{chazelle} from 1993, but most of the research directed towards this problem focused on the case of sparse incidence graphs. In contrast, Conjecture \ref{conj:lovett} is concerned with very dense incidence graphs. Furthermore, Lovett \cite{L16} notes that Conjecture \ref{conj:lovett}, if true, would imply improved bounds for matrix rigidity.
 
Addressing Conjecture \ref{conj:incidence} (and thus Conjecture \ref{conj:lovett} as well), Singer and Sudan \cite{SS22} proved that every incidence graph of $n$ points and $m$ hyperplanes in $\mathbb{R}^d$ with $\delta m n$ edges contains a complete bipartite subgraph with $\Omega(\delta^{2d}mn/d)$ edges, as long as $\delta \geq (n/2)^{-1/d}$. This implies that Conjecture \ref{conj:lovett} holds with the weaker lower bound $n\cdot \exp(-O(r))$. On the other hand, they show that if the conjecture is true, it is sharp. In particular, a construction achieving the bound $ n \cdot \exp(-O(\sqrt{\eps r}))$ can be constructed with the help of set systems. Given a pair of set systems $\cA,\cB \subset 2^{[r]}$, the \emph{intersection matrix} of $(\cA,\cB)$ is the matrix $M$, whose rows and columns are indexed by the elements of $\cA$ and $\cB$, respectively, and $M(A,B)=|A\cap B|$. Note that $\rank(M)\leq r$.

\medskip
\noindent
\textbf{Construction 2.} Let $\cF=\binom{[r]}{k}$ be the family of all $k$-element subsets of $[r]$, where $k=\sqrt{\eps r}$, and let $M_2$ be the $n\times n$ intersection matrix of $(\cF,\cF)$, where $n=\binom{r}{k}$.

\medskip

Then all but $\Theta(\eps n^2)$ entries of $M_2$ are 0, and $M_2$ contains no all-zero submatrix of size $ n \cdot 2^{-\sqrt{\eps r}}$. See Section \ref{sect:construction} for detailed calculations. This motivates the problem of studying disjointness graphs of set systems. Specifically, Singer and Sudan \cite{SS22} proposed the following conjecture. For any two set families $\cA, \cB \subseteq 2^{[n]}$ with $d(\cA, \cB)\geq 1-\eps$, one can find subfamilies $\cR \subseteq\cA, \cS \subseteq \cB$ such that $d(\cR, \cS)=1$ and $|\cR||\cS| \geq |\cA||\cB| \cdot 2^{-O_\eps(\sqrt{n})}$. In the next theorem, we show not only that this is true when $\eps<1/2$, but also find the optimal behavior in the case of sparse disjointness graphs. We say the two families $\cR$ and $\cS$ are \emph{cross-disjoint}, if $d(\cR,\cS)=1$, or in other words, $R$ and $S$ are disjoint for every $R\in\cR$ and $S\in\cS$.

\begin{theorem}\label{thm:singer sudan sparse}
Let $\cA, \cB\subset 2^{[n]}$ such that  $d(\cA,\cB)\geq \delta$  for some $\delta\in (0, 1-\frac{1}{n})$. Then there exist cross-disjoint subfamilies $\cR\subseteq \cA, \cS\subseteq \cB$ which satisfy \[|\cR||\cS|\geq {2^{-O(\sqrt{n\log 1/\delta})}}{|\cA||\cB|}.\]
\end{theorem}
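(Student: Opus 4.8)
The plan is to reduce the statement to a purely combinatorial question about bipartitions of $[n]$, and then to build the bipartition greedily, one element at a time, steered by a potential that balances the sizes of the surviving families against the current density of disjoint pairs.

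First, note that subfamilies $\cR\subseteq\cA$ and $\cS\subseteq\cB$ are cross-disjoint exactly when $\big(\bigcup_{R\in\cR}R\big)\cap\big(\bigcup_{S\in\cS}S\big)=\emptyset$; writing $X=\bigcup_{R\in\cR}R$, a cross-disjoint pair is therefore the same data as a set $X\subseteq[n]$ with $\cR\subseteq\cA\cap 2^X$ and $\cS\subseteq\cB\cap 2^{[n]\setminus X}$. So the theorem is equivalent to the existence of $X\subseteq[n]$ with $|\cA\cap 2^X|\cdot|\cB\cap 2^{[n]\setminus X}|\ge 2^{-O(\sqrt{n\log(1/\delta)})}|\cA||\cB|$. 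A one-shot random choice of $X$ cannot suffice: for any product distribution on $X$ (each coordinate placed on the $X$-side independently with some probability $p$), the expectation of $|\cA\cap 2^X|\cdot|\cB\cap 2^{[n]\setminus X}|$ equals $\sum_{(A,B)\text{ disjoint}}p^{|A|}(1-p)^{|B|}=\sum 2^{-\Theta(|A|+|B|)}$, which is as small as $2^{-\Theta(n)}|\cA||\cB|$ when the families consist of large sets (e.g.\ $\cA=2^{[n/2]}$, $\cB=2^{[n/2+1,n]}$), far below the target. Thus the good set $X$ must be constructed adaptively from the families.

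The construction will expose $[n]$ one element at a time (in a possibly random order). Having committed a partial bipartition $[i]=X_i\sqcup Y_i$, one maintains the surviving families $\cA_i=\{A\in\cA: A\cap Y_i=\emptyset\}$ and $\cB_i=\{B\in\cB: B\cap X_i=\emptyset\}$ (the sets still able to land in $\cR$, resp.\ $\cS$), the disjoint pairs $D_i=\{(A,B)\in\cA_i\times\cB_i: A\cap B=\emptyset\}$, and the density $\delta_i=|D_i|/(|\cA_i||\cB_i|)$. Placing the next element $e$ on the $X$-side leaves $\cA_i$ unchanged and deletes from $\cB_i$ every set containing $e$ (and from $D_i$ every pair whose $\cB$-coordinate contains $e$); the $Y$-side is symmetric. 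With $\beta$ the fraction of $\cB_i$ containing $e$ and $\gamma_B$ the fraction of $D_i$ whose $\cB$-coordinate contains $e$ (and $\alpha,\gamma_A$ the analogous quantities on the $\cA$-side; note $\gamma_A+\gamma_B\le 1$), the $X$-move multiplies $|\cA_i||\cB_i|$ by $1-\beta$ and $\delta_i$ by $\frac{1-\gamma_B}{1-\beta}$, while the $Y$-move multiplies them by $1-\alpha$ and $\frac{1-\gamma_A}{1-\alpha}$. At each step I would choose the side that maximises the decrease of the potential $\Phi_i=\log_2(|\cA_i||\cB_i|)+C\log_2\delta_i$ for a suitable absolute constant $C$, so that gaining disjointness density partly offsets the size loss and losing it is penalised. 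At the end $X_n,Y_n$ partition $[n]$, so $\cA_n=\cA\cap 2^{X_n}$ and $\cB_n=\cB\cap 2^{Y_n}$ are automatically cross-disjoint (hence $\delta_n=1$ and $\Phi_n=\log_2(|\cA_n||\cB_n|)$), and $\Phi_n\ge\Phi_0-(\text{total drop})=\log_2(|\cA||\cB|)-C\log_2(1/\delta)-(\text{total drop})$; since $C\log_2(1/\delta)=O(\sqrt{n\log(1/\delta)})$ (as $\log_2(1/\delta)=O(n)$), it suffices to bound the total drop by $O(\sqrt{n\log(1/\delta)})$.

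Bounding the total drop is the step I expect to be the main obstacle. Per step the chosen move costs $\min\{(1-C)\log_2(1-\beta)+C\log_2(1-\gamma_B),\,(1-C)\log_2(1-\alpha)+C\log_2(1-\gamma_A)\}$. When $\gamma_B=\beta$ the $X$-move costs exactly $\log_2(1-\beta)$ — pure size loss, density preserved — and this is precisely the behaviour of the extremal Construction~2 ($\cA=\cB=\binom{[n]}{k}$ with $k=\Theta(\sqrt{n\log(1/\delta)})$), where $\gamma=\beta=k/(n-|X_i|)$ at every step and summing over the $\approx n/2$ moves of each type gives a drop of $\Theta(\sqrt{n\log(1/\delta)})$. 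The work is to show that no $\cA,\cB$ does worse: a step at which the density strictly decreases must be paid for out of the finite reservoir $\log_2(1/\delta)$ of total density, so expensive steps are few, while the remaining steps shed only a little size. Concretely one wants an inequality $\min\{\dots\}\ge -g(\delta_i)$ with $\sum_i g(\delta_i)=O(\sqrt{n\log(1/\delta)})$ along the trajectory of the greedy choices; pinning down $C$ and controlling the joint behaviour of size loss and density loss across all $n$ steps is the delicate part. One must also be careful in the boundary regime $\delta$ close to $1-\tfrac1n$, where the claimed bound $2^{-O(\sqrt{n\log(1/\delta)})}$ is only a small constant and no spurious $\mathrm{poly}(n)$ loss is permissible — which is exactly why one works per element rather than first bucketing the families by set size.
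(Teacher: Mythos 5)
Your reduction to finding a single set $X\subseteq[n]$ with $|\cA\cap 2^X|\cdot|\cB\cap 2^{[n]\setminus X}|\geq 2^{-O(\sqrt{n\log 1/\delta})}|\cA||\cB|$ is correct, and the observation that a coordinate-wise product distribution for $X$ fails is also right. But the step you yourself flag --- bounding the total drop of $\Phi_i=\log_2(|\cA_i||\cB_i|)+C\log_2\delta_i$ --- is not a technicality; it is the entire content of the theorem, and you have not supplied the inequality that would make the greedy terminate with only $O(\sqrt{n\log 1/\delta})$ loss. As it stands nothing rules out a stretch of many steps at which both moves cost $\Theta(1)$ in potential (for instance $\alpha\approx\beta\approx\gamma_A\approx\gamma_B\approx 1/2$), and the only a priori budget you identify is the $C\log_2(1/\delta)=O(n)$ of density, which yields a total drop bound of $O(n)$, not $O(\sqrt{n\log 1/\delta})$. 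You gesture at ``expensive steps are few,'' but proving that requires exactly the global accounting the potential was meant to deliver automatically; until that inequality is written down the argument is a strategy sketch, not a proof. A second, smaller issue: your closing remark that one ``works per element rather than first bucketing the families by set size'' to avoid $\mathrm{poly}(n)$ losses near $\delta\approx 1-\frac1n$ presupposes the greedy drop analysis already exists, so it does not help close the gap.

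For contrast, the paper also constructs $X$ in one shot, but from a non-product distribution that depends on $\cA$: take $k=\lfloor\sqrt{n/\log\delta^{-1}}\rfloor$, draw $A_1,\dots,A_k$ independently uniformly from $\cA$, and set $X=\bigcup_{i=1}^k A_i$. Two short estimates finish. Calling a set $U\subseteq[n]$ bad if $|\cA\cap 2^U|<2^{-2n/k}|\cA|$, the probability all $k$ draws land inside a fixed bad $U$ is at most $(2^{-2n/k})^k=2^{-2n}$, so a union bound over all $2^n$ subsets shows $X$ is bad with probability at most $2^{-n}$. On the other side, $\E\bigl[|\cB\cap 2^{[n]\setminus X}|\bigr]=\sum_{B\in\cB}(\deg_G(B)/|\cA|)^k\geq\delta^k|\cB|$ by Jensen, and since this quantity is bounded by $2^n$ one gets $\Pb\bigl[|\cB\cap 2^{[n]\setminus X}|\geq\delta^k|\cB|/2\bigr]>2^{-n}$. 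Intersecting the two events produces $\cR,\cS$ of the required sizes. So your conclusion that $X$ ``must be constructed adaptively from the families'' overshoots: ruling out coordinate-wise product measures does not force an adaptive construction, and the paper's one-shot random union of $k$ sets from $\cA$ is precisely the non-product, non-adaptive choice that works.
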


Observe that substituting $\delta=1-\eps$ for some $\eps\in (0,1/2)$ and noting that $\log 1/\delta=\Theta(\eps)$ gives the conjectured bound of Singer and Sudan \cite{SS22}. On the other hand, it is worth pointing out that the analogue of Theorem \ref{thm:singer sudan sparse} for low rank matrices does not hold if $\delta$ is small. More precisely, if $M\in \bR^{n\times n}$ is a matrix of rank $r$ with $\delta n^2$ zero entries, the following construction shows that one cannot always guarantee an all-zero submatrix of size ${2^{-O(\sqrt{r\log 1/\delta})}}n$.

\medskip

\noindent
\textbf{Construction 3.} Let $r$ be an integer divisible by $4$ and let $N$ be the intersection matrix of $(2^{[r]}, 2^{[r]})$. Then, we set $M_3=N-\frac{r}{4}J$, where $J$ is an all-one matrix.
\medskip

Then $M_3$ is a $n\times n$ matrix, with $n=2^r$ and  $\rank(M_3)\leq r+1$. Moreover, $M_3$ has $\Omega(n^2/\sqrt{r})$ zero entries but $M_3$ contains no all-zero $s\times t$ sized submatrix if $st\geq 2^{r}=2^{-r}n^2$. We give detailed proofs of both of these properties in Section \ref{sect:construction}.

\subsection{Cover-free families and connections to coding theory}

The key insight in our proof of Theorem~\ref{thm:singer sudan sparse} is a covering result, which states that for a family $\cA\subseteq 2^{[n]}$ and uniformly random sets $A_0, \dots, A_r\in \cA$, the probability that $A_0\subseteq A_1\cup\dots\cup A_r$ is at least $2^{-O(n/r)}$. This is closely related to a concept in coding theory called $r$-cover-free families and it answers a question which was recently studied by Alon, Gilboa and Gueron \cite{AGG20}.

A set family $\cF\subseteq 2^{[n]}$ is called \textit{$r$-cover-free} if no set of $\cF$ is covered by the union of $r$ others. In other words, we require that there do not exist $r+1$ distinct sets $A_0, \dots, A_r$ for which $A_0\subseteq A_1\cup\dots\cup A_r$. 
This notion was originally introduced in 1964 by Kautz and Singleton \cite{KS64} in the context of coding theory, where they called such families \textit{disjunctive codes}.
However, this notion is also natural combinatorially, witnessed by the fact that it was independently introduced by Erd\H os, Frankl and F\"uredi \cite{EFF85} in 1985. Determining the maximum size of an $r$-cover-free family has been a long-standing open problem, of interest both in combinatorics and in coding theory. The best known upper bounds are due to D'yachkov and Rykov \cite{DR82} who show $|\cF|\leq r^{O(n/r^2)}$. F\"uredi \cite{F96} provides an elegant combinatorial argument which gives the same bound up to the constant in the exponent. However, the best known constructions of $r$-cover-free families only satisfy $|\cF|\geq 2^{\Omega(n/r^2)}$ (\cite{KS64}, \cite{EFF85}). Determining the correct base of the exponent remains an important open problem.

Recently, Alon, Gilboa and Gueron \cite{AGG20} approached this problem from a probabilistic perspective. They asked what probability distribution $\mu$ on $2^{[n]}$ minimizes the probability $\mathbb{P}[A_0\subseteq A_1\cup\dots \cup A_r]$  if $A_0,\dots,A_r$ are sampled independently with respect to $\mu$. 
A natural candidate for a distribution minimizing this probability is the uniform distribution on a maximal $r$-cover-free family $\cF$. Surprisingly, they show that this is not the case.

One of the key observations in Alon, Gilboa and Gueron \cite{AGG20}  is that for any distribution $\mu$, $\Pb[A_0\subseteq A_1\cup\dots\cup A_r]\geq \frac{1}{r^{O(n/r)}}$. This follows from combining the best known upper bound on $r$-cover-free families with a supersaturation result. We strengthen this lower bound, obtaining a tight result up to the constant in the exponent.

\begin{lemma} \label{lemma:covering lemma}
Let $n\geq r\geq 1$ be positive integers and let $\mu$ be a probability distribution on $2^{[n]}$. If $A_0, \dots, A_r$ are randomly and independently drawn elements of $2^{[n]}$ with respect to $\mu$, then
\[\Pb[A_0\subseteq A_1\cup\dots\cup A_r]\geq 2^{-n/r-2}.\]
\end{lemma}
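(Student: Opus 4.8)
The plan is to reduce the statement to the trivial regime $n\le r$ — where a union bound over the elements already works — by cutting the ground set into roughly $n/r$ blocks of size at most $r$. Fix a partition $[n]=D_1\sqcup\dots\sqcup D_s$ with $s=\lceil n/r\rceil$ and $|D_\ell|\le r$ for every $\ell$. Since $A_0\subseteq A_1\cup\dots\cup A_r$ holds if and only if $A_0\cap D_\ell\subseteq A_1\cup\dots\cup A_r$ holds for every $\ell$, it suffices to show that the intersection over $\ell$ of these events has probability at least $2^{-s}\ge 2^{-n/r-1}\ge 2^{-n/r-2}$.

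The base estimate for a single block $D=D_\ell$ is: writing $p_i=\Pb_{A\sim\mu}[i\in A]$, a union bound gives $\Pb[A_0\cap D\not\subseteq A_1\cup\dots\cup A_r]\le\sum_{i\in D}p_i(1-p_i)^r\le |D|\cdot\max_{x\in[0,1]}x(1-x)^r$, and since $\max_x x(1-x)^r=\frac1{r+1}(1-\frac1{r+1})^r<\frac1{er}$ this is at most $|D|/(er)\le 1/e<1/2$. So each block individually is covered with probability at least $1-1/e>1/2$. The point is that the summand $p_i(1-p_i)^r$ carries the factor $p_i=\Pb[i\in A_0]$, which is exactly what keeps the sum under control once $|D|\le r$.

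To combine the blocks I would reveal the restrictions of $A_0,\dots,A_r$ to $D_1$, then to $D_2$, and so on, and factor $\Pb\big[\bigcap_\ell\{A_0\cap D_\ell\subseteq\bigcup_j A_j\}\big]=\prod_\ell\Pb\big[A_0\cap D_\ell\subseteq\bigcup_j A_j\ \big|\ A_0\cap D_{<\ell}\subseteq\bigcup_j A_j\big]$, where $D_{<\ell}=D_1\cup\dots\cup D_{\ell-1}$. Given the revealed restrictions to $D_{<\ell}$, the restrictions to $D_\ell$ of $A_0,\dots,A_r$ are still independent of each other (because $A_0,\dots,A_r$ are independent), but $A_j\cap D_\ell$ now follows the conditional law of $A\cap D_\ell$ given the observed value of $A\cap D_{<\ell}$; in particular its marginals $\Pb[i\in A_j\cap D_\ell\mid\cdot]$ are no longer equal to $p_i$. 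One then wants to re-run the base-case union bound with these conditional marginals and average over the conditioning, using that $\Pb[i\in A\mid A\cap D_{<\ell}=v]$ has $\mu$-mean exactly $p_i$, to conclude that each conditional factor is still at least $1/2$ (at least on average, or outside a negligible bad set of conditionings). Iterating over $\ell$ yields $\Pb[\text{all blocks covered}]\ge(1/2)^s$, and the extra slack between $1-1/e$ and $1/2$, together with $s\le n/r+1$, comfortably gives the claimed $2^{-n/r-2}$.

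The step I expect to be the main obstacle is precisely this last one: controlling the distortion of the within-block distributions caused by conditioning on the earlier blocks being covered. For a product distribution $\mu$ there is no distortion — the blocks are genuinely independent — so the argument above is complete and clean; but for general $\mu$ the event "$D_{<\ell}$ covered" can have small probability and can reweight the coverers' restrictions to $D_\ell$ unfavourably (e.g. towards sets missing $D_\ell$), in which case a naive within-block union bound fails there. Making this robust — by choosing the block sizes and order adaptively to $\mu$, by replacing the union bound with a second moment estimate on the number of uncovered elements of $A_0\cap D_\ell$, or by an entropy-compression argument constructing an auxiliary distribution $\nu$ supported on $\{A_0\subseteq\bigcup_j A_j\}$ with small relative entropy to $\mu^{\otimes(r+1)}$ — is where the real work lies, and is presumably where the key idea of the paper enters. (Note also that tensoring $\mu$ with itself on disjoint ground sets shows the constant $2$ in the exponent is essentially free, so the whole content is to get the coefficient $1$ in front of $n/r$.)
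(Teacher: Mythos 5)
Your attempt contains a genuine gap, and you correctly identify it yourself: after conditioning on earlier blocks being covered, the conditional within-block marginals are no longer controlled, and you have no handle on the conditional factors. The block-decomposition route is indeed the wrong one; the paper takes a completely different and much shorter path that never partitions the ground set at all.

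The paper's proof runs as follows. Call a set $U\subseteq[n]$ \emph{bad} if $\mu\bigl(\{A : A\subseteq U\}\bigr)\le 2^{-n/r-1}$, i.e.\ if a fresh $\mu$-sample lands inside $U$ with probability at most $2^{-n/r-1}$. For any fixed bad $U$, by independence
\[
\Pb[A_1,\dots,A_r\subseteq U]\ =\ \prod_{j=1}^r\Pb[A_j\subseteq U]\ \le\ \bigl(2^{-n/r-1}\bigr)^r\ \le\ 2^{-n-1},
\]
so a union bound over all $2^n$ subsets of $[n]$ shows that $B:=A_1\cup\dots\cup A_r$ is bad with probability at most $2^n\cdot 2^{-n-1}=1/2$. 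Whenever $B$ is not bad, the independent sample $A_0$ satisfies $A_0\subseteq B$ with probability at least $2^{-n/r-1}$. Multiplying the two gives $\Pb[A_0\subseteq A_1\cup\dots\cup A_r]\ge \tfrac12\cdot 2^{-n/r-1}=2^{-n/r-2}$. The crucial move is to threshold not on elements of $[n]$ but on the quality of the union itself, defined relative to $\mu$; the event that this quality is low is a union over only $2^n$ sets, each contributing $2^{-n-1}$, which is what makes the exponents come out right with no loss. This is also why the argument adapts automatically to the structure of $\mu$, whereas a fixed partition of $[n]$ cannot, which is exactly the obstruction you ran into.

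One more technical point worth flagging about your approach even in the product case: the factorization
\[
\Pb\Bigl[\bigcap_\ell \{A_0\cap D_\ell\subseteq\textstyle\bigcup_j A_j\}\Bigr]=\prod_\ell \Pb\Bigl[A_0\cap D_\ell\subseteq\textstyle\bigcup_j A_j\ \Big|\ A_0\cap D_{<\ell}\subseteq\textstyle\bigcup_j A_j\Bigr]
\]
conditions on the coverage event, not on the revealed restrictions, so even to carry out your plan you would need a further averaging over restrictions compatible with coverage, which biases the conditional law of the coverers in a direction you cannot control by a first-moment argument; your suggested second-moment or entropy-compression repairs would have to address precisely this reweighting, and none of them is needed once one thresholds on $\mu(2^{B})$ as above.
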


We show that this lower bound is indeed optimal. Let $\mu$ be the $p$-biased distribution on $2^{[n]}$, that is, $\mu(A)=p^{|A|}(1-p)^{n-|A|}$ for every $A\subseteq [n]$. It is easy to calculate that $\Pb[A_0\subseteq A_1\cup\dots\cup A_r]=(1-p(1-p)^r)^n$. This quantity is minimized if $p=\frac{1}{r+1}$, in which case  $1-p(1-p)^r=1-\frac{1+o(1)}{e(r+1)}$ and 
$(1-p(1-p)^r)^n=2^{-\Theta(n/r)}.$

\subsection{Combinatorics of low-rank matrices}\label{sect:intro_matrices}

As already discussed above, pairs of set systems $(\cA,\cB)$ on an $r$ element universe give rise to rank $r$ matrices that record the disjoint pairs in $\cA\times \cB$ via their zero entries. More precisely, recall that the intersection matrix of a pair of sets systems $(\cA,\cB)$ is the matrix whose rows and columns are indexed by the elements of $\cA$ and $\cB$ and whose entries are defined as $M(A,B)=|A\cap B|$. 
In the previous sections, we presented a number of results which lead us to completely understand the size of all-zero submatrices in those low rank matrices that arise from set systems. 

In this section, our goal is to present results that address more general matrices. These results are motivated by both the log-rank conjecture and Conjecture \ref{conj:lovett}. Our first result generalizes the best known upper bound on the log-rank conjecture and shows that Conjecture \ref{conj:lovett} holds for a large class of matrices.

\begin{theorem}\label{thm:sparse_matrix}
Let $M$ be an $m\times n$ matrix of rank $r$ with non-negative entries, none of which are in $(0, 1)$. If the average of the entries of $M$ is $\eps\leq 1/2$, then $M$ contains an all-zero submatrix of size at least $$n2^{-O(\log r+\sqrt{\eps r})}.$$
\end{theorem}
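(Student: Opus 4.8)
The plan is to reduce the problem to a question about disjointness/incidence structure of set systems, to which we can then apply Theorem \ref{thm:singer sudan sparse}. Write $Z$ for the bipartite "zero-pattern" graph of $M$: a row $i$ and column $j$ are adjacent iff $M(i,j)=0$. Since the average entry of $M$ is $\eps\le 1/2$ and no entry lies in $(0,1)$, every nonzero entry is $\ge 1$, so the number of nonzero entries is at most $\eps mn$, i.e. $Z$ has density $d(Z)\ge 1-\eps$. An all-zero submatrix of $M$ is exactly a complete bipartite subgraph of $Z$, so we must show $Z$ contains a (one-sided) biclique using a $2^{-O(\log r+\sqrt{\eps r})}$ fraction of the columns and a positive fraction of the rows. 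The rank hypothesis enters via the standard point--hyperplane realization described in the excerpt: there are points $p_1,\dots,p_m$ and hyperplanes $H_1,\dots,H_n$ in $\bR^{r-1}$ with $M(i,j)=0$ iff $p_i\in H_j$, and $Z$ is their incidence graph. So it suffices to prove the statement of Conjecture \ref{conj:incidence} with the claimed (slightly weaker, by the $\log r$ term) bound for dense incidence graphs.

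The key step is to transfer this to the set-system setting so that Theorem \ref{thm:singer sudan sparse} applies. Here I would follow the approach suggested by Construction 2 in reverse: I want to encode the incidence data by pairs of subsets of a universe of size roughly $O(r)$ (or $O(\eps r)$ in the optimal regime) in such a way that incidences become disjointness. Concretely, after a preprocessing step that handles the bounded number of "degenerate" coordinates, one associates to each point $p_i$ a set $A_i$ and to each hyperplane $H_j$ a set $B_j$ on a common ground set $[N]$ with $N=O(r)$, with $p_i\in H_j \iff A_i\cap B_j=\emptyset$; then $d(\{A_i\},\{B_j\})\ge 1-\eps$, and Theorem \ref{thm:singer sudan sparse} with $\delta=1-\eps$ (so $\log 1/\delta=\Theta(\eps)$) yields cross-disjoint $\cR,\cS$ with $|\cR||\cS|\ge 2^{-O(\sqrt{N\eps})}mn = 2^{-O(\sqrt{\eps r})}mn$. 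A balancing/averaging argument then converts the product bound into the desired one-sided statement: if $|\cS|\ge 2^{-O(\sqrt{\eps r})} n$ we are done on the column side, and a further averaging over the rows (or a dyadic pigeonhole on row-degrees, which is where a $\log r$ factor can appear) gives a positive fraction of rows, yielding an all-zero submatrix of the stated size.

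The main obstacle is producing the set-system encoding of a general rank-$r$ zero pattern: unlike intersection matrices of set systems, an arbitrary rank-$r$ matrix need not come from subsets of a small universe, and the zero-set of a linear form over $\bR$ is not literally a "disjointness" event. I expect to circumvent this not by an exact encoding but by a discrepancy/covering argument in the spirit of the proof of Theorem \ref{thm:singer sudan sparse} itself: partition (a dense portion of) the rows and columns according to the signs or approximate values of the linear forms, use the low rank to bound the number of distinct "cells" by $2^{O(r)}$, and inside a suitable cell run the covering lemma (Lemma \ref{lemma:covering lemma}) to find the biclique. The $\log r$ loss in the exponent, absent from Conjecture \ref{conj:lovett}, should come precisely from this cell-counting / dyadic pigeonholing overhead, exactly as the analogous $\log r$ appears in Lovett's bound for the log-rank conjecture. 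The remaining steps — checking that $\eps\le 1/2$ suffices to keep the graph dense after removing degenerate coordinates, and that the averaging preserves a positive row fraction — are routine.
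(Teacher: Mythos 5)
Your proposal takes a genuinely different route from the paper, but it contains a gap that you yourself flag and then do not actually close. The crux of your plan is to reduce the zero pattern of a general rank-$r$ non-negative matrix $M$ to the disjointness pattern of two set families on a universe of size $O(r)$ and then apply Theorem~\ref{thm:singer sudan sparse}. As you note, no such encoding exists: the zero set of a real linear form $\langle p_i,q_j\rangle - a_j = 0$ carries no disjointness structure, and a general rank-$r$ zero pattern is strictly more expressive than disjointness patterns of subsets of $[O(r)]$. Your fallback --- partitioning rows and columns into $2^{O(r)}$ cells ``by signs or approximate values of the linear forms'' and running Lemma~\ref{lemma:covering lemma} inside a cell --- does not repair this. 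First, the covering lemma is a statement about unions of sets drawn from a distribution on $2^{[n]}$; inside a cell of your partition the relation $M(i,j)=0$ is still not a set-containment/disjointness relation, so the lemma simply has nothing to act on. Second, your own accounting is inconsistent: $2^{O(r)}$ cells would force you to pigeonhole away a $2^{-O(r)}$ fraction of rows/columns, which is exponentially worse than the claimed $2^{-O(\log r + \sqrt{\eps r})}$; the assertion that this overhead ``should come precisely'' to a $\log r$ loss is not justified, and the analogy to Lovett's $\log r$ factor (which arises from an iteration-count times per-step loss, not from cell counting) does not hold.

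For comparison, the paper's argument stays entirely in linear algebra and never passes through set systems. It proves a cut-norm/$\gamma_2^*$ lower bound for arbitrary rank-$r$ matrices (Lemma~\ref{lemma:cut}), converts it to a discrepancy lower bound in terms of the matrix variance (Lemma~\ref{lemma:disc}), and then runs an iteration that at each step passes to a half-sized submatrix that either lowers the average entry by $\Omega(\sqrt{p/r})$ or shrinks the variance by a large constant factor (Lemma~\ref{lemma:two cases}). A potential function $f(p,q)=\sqrt{rp}+\frac{c}{10}\log_2(\Delta q)$ controls the number of steps at $O(\sqrt{\eps r}+\log r)$, after which the density is $\le 1/(16r)$ and a greedy/matching argument (Lemma~\ref{lemma:low_average}) produces a large all-zero block. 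The $\log r$ in the exponent comes from the initial variance being only polynomially bounded in $r$, not from any cell-counting. If you want to pursue your reduction idea, it genuinely only applies to matrices that are already intersection matrices of set systems (the setting of Theorem~\ref{thm:singer sudan sparse}); extending to general rank-$r$ matrices requires the discrepancy machinery, not an encoding.
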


If $M$ is a binary matrix, then either at least half of the entries are zero, or at least half of the entries are 1. Hence, we can apply the previous theorem to either $M$ or $J-M$ to conclude that $M$ contains a constant submatrix of size at least $n2^{-O(\sqrt{r})}$, recovering the best known bound of Sudakov and Tomon \cite{ST24} on the log-rank conjecture. On the other hand, an important new feature of Theorem \ref{thm:sparse_matrix} is that it is optimal when $\eps\gg (\log r)^2/r$. Indeed, the rank $r$ matrix $M$ in Construction 2 also satisfies that the average of its entries is $\eps$ and it contains no all-zero submatrix of size $ n \cdot \exp(-\sqrt{\eps r})$. 

\medskip

Using similar methods, we also establish the following result, which generalizes the best known bound for log-rank conjecture from binary matrices to matrices with integer  entries between $0$ and $t$. Furthermore, it extends the previous theorem to matrices with average entry larger than~$1$.

\begin{theorem}\label{thm:few_distinct_values}
Let $M$ be an $m\times n$ matrix of rank $r$ with nonnegative integer entries such that the average of the entries is at most $t\geq 1$. Then $M$ contains a constant submatrix of size at least $2^{-O(t\sqrt{r})}m \times 2^{-O(t\sqrt{r})} n.$
\end{theorem}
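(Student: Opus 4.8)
The plan is to reduce Theorem~\ref{thm:few_distinct_values} to the task of finding a large all-zero submatrix in a low-rank matrix that has a not-too-small density of zero entries, and then to run the iterative argument behind Theorem~\ref{thm:sparse_matrix} in this slightly more general setting. For the first step, since the average entry of $M$ is at most $t$, Markov's inequality shows that at least $mn/2$ of the entries are at most $2t$; as the entries are integers, pigeonhole then produces a value $v\in\{0,1,\dots,\lfloor 2t\rfloor\}$ with $M_{ij}=v$ for at least $\frac{mn}{2(2t+1)}\geq\frac{mn}{6t}$ pairs $(i,j)$ (using $t\geq 1$). A constant submatrix of $M$ with value $v$ is exactly an all-zero submatrix of $N:=M-vJ$, which has rank at most $r+1$ and at least $\frac{mn}{6t}$ zero entries. (If one only assumes the entries of $M$ lie in $\{0,\dots,t\}$, the most popular value already has density $\geq 1/(t+1)$ and the Markov step is unnecessary.) One should not instead try to peel off values one at a time and recurse on $\mathbf{1}[M=v]$ or on ``$M$ with the value $v$ deleted'': the zero-pattern of a low-rank matrix can have arbitrarily large rank, so there is no control after such an operation, whereas $M-vJ$ keeps the rank essentially intact.

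So it suffices to establish the following, which I would prove by the method of Theorem~\ref{thm:sparse_matrix}: \emph{an $m'\times n'$ matrix of rank $\rho$ with at least $\mu m'n'$ zero entries contains an all-zero submatrix of size at least $2^{-O(\sqrt{\rho}/\mu)}m'\times 2^{-O(\sqrt{\rho}/\mu)}n'$}; applied with $\rho=r+1$ and $\mu\asymp 1/t$ this yields the bound in Theorem~\ref{thm:few_distinct_values}. Writing the rows of $N$ as linear forms $x\mapsto\langle u_i,x\rangle$ on $\mathbb{R}^{\rho}$ and the columns as vectors $w_j\in\mathbb{R}^{\rho}$ with $N_{ij}=\langle u_i,w_j\rangle$, an all-zero $I\times J$ submatrix is precisely a subspace $V\leq\mathbb{R}^{\rho}$ with $\{w_j:j\in J\}\subseteq V$ and $\{u_i:i\in I\}\subseteq V^{\perp}$, so the task is to find such a $V$ containing $\geq 2^{-O(\sqrt\rho/\mu)}n'$ of the $w_j$ whose orthogonal complement contains $\geq 2^{-O(\sqrt\rho/\mu)}m'$ of the $u_i$. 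I would build $V$ greedily exactly as in the proof of Theorem~\ref{thm:sparse_matrix}: the crucial, and entirely sign-independent, fact is that among any $\rho+1$ columns there is a linear dependency, hence one of them is zero on every row on which the other $\rho$ are simultaneously zero; iterating a restriction step (pass to the larger orthogonality-heavy side, which lowers the effective dimension by one while losing only a $\mu$-controlled factor in the density of zeros) for the appropriate number $\Theta(\sqrt\rho)$ of rounds — chosen to balance the dimension drop against the density loss — produces the desired $V$.

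The heart of the matter, and the step I expect to be the main obstacle, is making the Theorem~\ref{thm:sparse_matrix} iteration robust \emph{simultaneously} to a density of zeros as small as $\Theta(1/t)$ and to arbitrary signs in $N=M-vJ$; one has to track how the density of zeros, the ambient dimension, and the surviving numbers of rows and columns evolve over the $\Theta(\sqrt\rho)$ rounds so that the multiplicative losses compound to $2^{-O(t\sqrt r)}$ rather than something worse. The sign-independence is essentially free, since all the combinatorics takes place on the zero-pattern, but the small density forces a more careful calibration of how many rounds to run and how aggressively to restrict at each step. Two further points to keep in mind: the conclusion is vacuous unless $m,n\gtrsim 2^{\Omega(t\sqrt r)}$, which is consistent with Construction~2; and the easy regimes $r=O(1)$, $t=O(1)$, and $\mu\geq 1/2$ (where Theorem~\ref{thm:sparse_matrix} or the log-rank bound of \cite{ST24} can be applied to a suitable shift of $M$ directly) can be handled separately. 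It is worth first checking whether a black-box reduction to Theorem~\ref{thm:sparse_matrix} suffices, but the negative entries and low zero-density of $M-vJ$ seem to genuinely require rerunning the argument.
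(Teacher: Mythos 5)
Your reduction step is fine: Markov plus pigeonhole does give a value $v\in\{0,\dots,\lfloor 2t\rfloor\}$ of density $\geq 1/(6t)$, and $N=M-vJ$ has rank $\leq r+1$ with $\geq mn/(6t)$ zeros; if the intermediate lemma you state were true, the theorem would indeed follow. The gap is that intermediate lemma. You assert that a rank-$\rho$ matrix with zero-density $\mu$ has an all-zero submatrix of size $2^{-O(\sqrt{\rho}/\mu)}$ on both sides, and propose to prove it ``by the method of Theorem~\ref{thm:sparse_matrix}'' via a greedy search over linear dependencies. But this is not what the proof of Theorem~\ref{thm:sparse_matrix} does: that proof is a discrepancy iteration driven by Lemma~\ref{lemma:cut} (a $\gamma_2^*$-norm / Grothendieck lower bound on the cut norm) and a variance-tracking potential; there is no greedy dimension-reduction step. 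A greedy argument of the kind you sketch only yields the Singer--Sudan bound $\mu^{O(\rho)} = 2^{-O(\rho\log(1/\mu))}$, which for $\mu\asymp 1/t$ with $t\ll\sqrt\rho$ is exponentially weaker than $2^{-O(\sqrt\rho/\mu)}$.

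More fundamentally, the lemma you invoke is not proved in the paper and is likely out of reach: specialized to $\mu=1/2$ and an arbitrary real matrix it is precisely Conjecture~\ref{conj:lovett} at $\eps=1/2$, which the paper explicitly lists as open (Theorem~\ref{thm:sparse_matrix} only handles the \emph{separated, nonnegative} case). You correctly identify the obstacle — controlling the iteration when $N=M-vJ$ has negative entries and possibly large average $p(N)=p(M)-v$ — but you do not resolve it, and nothing in the paper resolves it in the generality your lemma requires. In particular Lemma~\ref{lemma:large_F} (the variance lower bound $q\gtrsim p$) needs $p(M)$ bounded away from $1$, and after a single shift by a popular value you have no control on $p(N)$ at all, so the potential-function argument does not close.

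The paper instead avoids the single-shift strategy entirely. It proves a \emph{level-descent} statement: if $p(M)$ lies near an integer level $\ell$, then at multiplicative cost $2^{O(\sqrt r)}$ on each side one finds a submatrix that is either constant or has average near level $\ell-1$. This is done by first shifting only to the level \emph{just below} the current average (so the shifted matrix is separated, nonnegative, and has average in $[0,1)$, putting Lemma~\ref{lemma:large_F} and Lemma~\ref{lemma:two cases appendix} back in play via Proposition~\ref{prop:iteration}), and then running a finer iteration (Lemma~\ref{lemma:two cases variant appendix}, with the variance lower bound coming from Lemma~\ref{lemma:small_variance} for integer matrices) to push the average past $\ell-2/\sqrt r$. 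Iterating this $\leq t$ times gives $2^{O(t\sqrt r)}$. The crucial point, which your plan misses, is that one must re-center at every level rather than shift once: only then are the averages always in a range where the discrepancy machinery applies.
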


\noindent
Observe that for constant $t$, this bound matches the best known bound for the log-rank conjecture. 

% Since the proof of Theorem~\ref{thm:few_distinct_values} uses very similar methods as Theorem~\ref{thm:sparse_matrix} but is more technical, we do not spell it out in this paper. We refer the interested reader to the Appendix of the arXiv version of this paper for the full proof.
\medskip

\noindent
\textbf{Organization.}  We start with the proof of Theorem \ref{thm:singer sudan sparse} and Lemma \ref{lemma:covering lemma} in Section \ref{sect:singer-sudan}, since their proofs are the shortest and, we believe, the most elegant. Then, we build on the ideas of Section~\ref{sect:singer-sudan} and present the proof of Theorems \ref{thm:coarse erdos daykin}, \ref{thm:fine-grained daykin erdos}, \ref{thm:daykin erdos nonzero intersection} and \ref{thm:even} about the Daykin-Erd\H{o}s problem in Section~\ref{sect:daykin-erdos}. The proof of Theorem~\ref{thm:sparse_matrix} is presented in Section \ref{sect:low-rank matrices}. Finally, in Section \ref{sect:construction}, we prove that the constructions discussed in the introduction indeed have the claimed properties. We conclude by discussing some open problems in Section~\ref{sect:open_problems}. 
Finally, we also provide the proof of Theorem~\ref{thm:few_distinct_values} in the Appendix, since its proof uses similar methods as Theorem~\ref{thm:sparse_matrix}, but is more technical.

\section{Large bicliques in disjointness graphs}\label{sect:singer-sudan}

In this section, we prove  Theorem~\ref{thm:singer sudan sparse} and our covering result, Lemma~\ref{lemma:covering lemma}. We use the following standard graph theoretic notation. If $G$ is a graph and $x\in V(G)$, then $N_G(x)=N(x)$ denotes the set of neighbours of $x$. Also, if $U\subseteq V(G)$, then $N_G(U)=N(U)$ is the \emph{common neighbourhood} of $U$, that is, the set of vertices $y\in V(G)\setminus U$ that are connected to every element of $U$ by an edge. In case $U=\{u_1,\dots,u_s\}$, we simply write $N(u_1,\dots,u_s)$ instead of $N(U)$.

Our approach is inspired by the argument of Alon and Frankl which proved the original Daykin-Erd\H{o}s conjecture. Their argument proceeds as follows: given a family $\cF\subset 2^{[n]}$ with a dense disjointness graph, take $k$ random sets $A_1, \dots, A_k$ from the family $\cF$ with a suitably chosen $k$ and consider their common neighbourhood. Since the disjointness graph of $\cF$ is dense, this common neighbourhood is large, i.e. many sets of $\cF$ are disjoint from $A_1, \dots, A_k$, and so the union $U_A=A_1\cup\dots\cup A_k$ cannot be much larger than $n/2$. If the family $\cF$ is large, a simple counting argument shows that the union $A_1\cup\dots\cup A_k$ is expected to be significantly larger than $n/2$, giving a contradiction.
In our setting, the family $\cF$ is not known to be large and so the union $A_1\cup\dots\cup A_k$ may not be large, but we can show that it still covers many sets of $\cF$, which suffices to prove Theorem~\ref{thm:singer sudan sparse}. To show this, we introduce the notion of \textit{bad} sets, which is the key new ingredient in the proof.

\begin{proof}[Proof of Theorem~\ref{thm:singer sudan sparse}.]
Define a bipartite graph $G$ on the vertex set $\cA\cup \cB$, in which $A\in \cA$ is adjacent to $B\in \cB$ if $A\cap B=\varnothing$. Then $G$ is the disjointness graph of $(\cA,\cB)$, and by the assumptions of the theorem, $G$ has at least $\delta|\cA||\cB|$ edges. Throughout the proof, we assume that $|\cA|, |\cB|\geq 2^{2\sqrt{n\log{\delta^{-1}}}}$. Otherwise, if $|\cB|\leq 2^{2\sqrt{n\log{\delta^{-1}}}}$, then pick $\cB'=\{B\}$ and $\cA'=N(B)$, where $B\in \cB$ is the vertex of maximum degree in $G$. Then  $|\cB'|=1\geq 2^{-2\sqrt{n\log{\delta^{-1}}}}|\cB|$ and $|\cA'|=\delta |\cA|\geq 2^{-\sqrt{n\log{\delta^{-1}}}}|\cA|$. This in particular implies $\delta \ge 2^{-n}$.

The main idea of the proof is to pick $k=\lfloor \sqrt{n/\log \delta^{-1}}\rfloor$ sets $A_1, \dots, A_k$ uniformly at random from $\cA$ and consider their union $U=\bigcup_{i=1}^k A_i$. Then set $\cR$ to be the family of sets in $\cA$ contained in $U$ and set $\cS$ be the family of sets in $\cB$ contained in $[n]\backslash U$. This construction ensures that for each pair of sets $(R,S)\in \cR\times \cS$, $R$ and $S$ are disjoint, and therefore our main goal is to show that with positive probability, both $\cR$ and $\cS$ are sufficiently large. Note that $|\cR|$ is precisely the number of sets of $\cA$ covered by $U$, while $|\cS|$ is equal to the size of the common neighbourhood of vertices $A_1, \dots, A_k$ in the disjointness graph $G$, i.e. $|\cS|=|N_{G}(A_1, \dots, A_k)|$.

First, we show that $$\Pb[|\cR|\geq 2^{-2n/k} |\cA|]\geq 1-2^{-n}.$$ Say that a set $U$ is \textit{bad} if it covers less than $2^{-2n/k}|\cA|$ sets from the family $\cA$. Then the probability that $k$ randomly chosen sets from $\cA$ land inside $U$ is at most $(2^{-2n/k})^k=2^{-2n}$. Hence, taking the union bound over all $2^n$ potential bad sets, the probability that $U=\bigcup_{i=1}^k A_i$ is bad can be bounded by $2^{-n}$. Therefore, with probability at least $1-2^{-n}$,  $|\cR|\geq 2^{-2n/k} |\cA|$.

Next, we show that $$\Pb[|\cS|\geq \delta^k |\cB|/2]> 2^{-n}.$$
Observe that the expected size of the common neighbourhood of $k$ random vertices $A_1, \dots, A_k\in \cA$ in the graph $G$ can be computed as follows
\[\E\big[|N_G(A_1, \dots, A_k)|\big]=\sum_{B\in \cB} \Pb\big[B\in N_G(A_1, \dots, A_k)\big]=\sum_{B\in \cB} \left(\frac{\deg_G(B)}{|\cA|}\right)^k\geq \delta^k |\cB|,\]
where the last inequality follow from Jensen's inequality. Using that $0\leq |\cS|\leq 2^n$, we can write \begin{equation}\label{eq: common neighborhood}\delta^k |\cB|\leq \E\big[|\cS|\big]\leq  2^n\cdot \Pb\Big[|\cS|\geq \delta^k |\cB|/2\Big] + \frac{\delta^k |\cB|}{2}\cdot \Pb\Big[|\cS|< \delta^k |\cB|/2\Big] < 2^n \Pb\Big[|\cS|\geq \delta^k |\cB|/2\Big]+\frac{\delta^k |\cB|}{2}.\end{equation}
Note that $\delta^k|\cB|\geq \delta^{\sqrt{n/\log\delta^{-1}}}2^{2\sqrt{n\log\delta^{-1}}} = 2^{\sqrt{n\log\delta^{-1}}}\geq 2$, where the last inequality holds by our assumption $\delta\leq 1-1/n$. Therefore, comparing the two sides of Eq.~\ref{eq: common neighborhood} gives $\Pb[|\cS|\geq \delta^k |\cB|/2]>2^{-n}$. 

Hence, there exists a choice of $A_1,\dots,A_k$ for which 
\begin{align*}
|\cS|&\geq \delta^k |\cB|/2=2^{-k\log\delta^{-1}-1}|\cB|\geq 2^{-\sqrt{n\log\delta^{-1}}-1}|\cB|,\\
|\cR|&\geq 2^{-2n/k}|\cA|\geq 2^{-4\sqrt{n\log\delta^{-1}}}|\cA|,
\end{align*}
and this completes the proof.
\end{proof}

We now use the notion of bad sets from the previous proof to show Lemma~\ref{lemma:covering lemma}.

\begin{proof}[Proof of the Lemma~\ref{lemma:covering lemma}.]
Say that a set $U\subseteq [n]$ is \textit{bad} if $\mu(2^{U})=\Pb[A\subseteq U]\leq 2^{-n/r-1}$, where $A$ is a random set drawn from $\mu$. If  $A_1, \dots, A_r$ are drawn independently from the distribution $\mu$, then for every bad set $U$,
$$\Pb[A_1, \dots, A_r\subseteq U]=\prod_{i=1}^r \Pb(A_i\in 2^{U})\leq (2^{-n/r-1})^r\leq 2^{-n-1}.$$
Hence, by the union bound over all subsets of $[n]$, the probability that $B=A_1\cup \dots\cup A_r$ is a bad set is at most $2^n \cdot 2^{-n-1}=1/2$. Furthermore, if $B$ is not bad, a random set $A_0$, chosen from the distribution $\mu$, lies in $2^B$ with probability at least $2^{-n/r-1}$. In conclusion, $\Pb[A_0\subseteq A_1\cup\dots\cup A_r]\geq 1/2\cdot 2^{-n/r-1}=2^{-n/r-2}$, completing the proof.
\end{proof}

\section{Optimal bounds for the Daykin - Erd\H os problem}\label{sect:daykin-erdos}

In this section, we prove our result concerning the Daykin-Erd\H{o}s problem. We present two proofs of Theorem \ref{thm:coarse erdos daykin}, as both proofs are fairly short and the techniques might be of independent interest. Our first proof is combinatorial and, in our opinion, more intuitive because of the similarities with the proof of Theorem~\ref{thm:singer sudan sparse}. The second approach, based on entropy methods, is a bit stronger, since it can also be used to show tight bounds in Theorem~\ref{thm:fine-grained daykin erdos}.

\subsection{A combinatorial proof}

Our proof uses ideas from Section~\ref{sect:singer-sudan}, together with the following new ingredient. Instead of one, we find two collections of sets, $C_A=\{A_1, \dots, A_t\}\subseteq \cF$ and $C_B=\{B_1, \dots, B_s\}\subseteq \cF$, such that both collections have many common neighbours in the disjointness graph, the unions $U_A=A_1\cup\dots\cup A_t$ and $U_B=B_1\cup\dots\cup B_s$ are disjoint and, moreover, these unions cannot be significantly enlarged by adding a single set to $C_A$ or $C_B$.
We then complete the proof by giving an upper bound on the size of the common neighbourhood of $C_B$,
by observing that all of the common neighbours of $C_B$ are disjoint from $U_B$ and do not have many elements outside $U_A$. 

We prepare the proof with a folklore result about finding subgraphs of large minimum degree. We prove this for the sake of completeness.

\begin{lemma}\label{lemma:cleaning}
Let $H$ be a bipartite graph with parts $X$ and $Y$ and with at least $\eps |X||Y|$ edges. Then there exist sets $X'\subseteq X, Y'\subseteq Y$ such that $|X'||Y'|\geq \frac{\eps}{2}|X||Y|$ and we have the minimum degree conditions $|N(x)\cap Y'|\geq \eps |Y|/4, |X'\cap N(y)|\geq \eps |X|/4$ for all $x\in X', y\in Y'$.
\end{lemma}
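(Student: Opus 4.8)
The plan is a standard iterative cleaning (peeling) argument. I would build decreasing sequences of vertex sets $X=X_0\supseteq X_1\supseteq\cdots$ and $Y=Y_0\supseteq Y_1\supseteq\cdots$ as follows: given the current pair $(X_i,Y_i)$, if some vertex $x\in X_i$ satisfies $|N(x)\cap Y_i|<\eps|Y|/4$, delete it (set $X_{i+1}=X_i\setminus\{x\}$, $Y_{i+1}=Y_i$); likewise, if some $y\in Y_i$ satisfies $|N(y)\cap X_i|<\eps|X|/4$, delete it. If no such vertex exists, halt and output $X'=X_i$, $Y'=Y_i$. Since the vertex sets are finite and shrink at each step, the process terminates, and by the halting condition every surviving vertex has the desired degree into the surviving opposite part, which is exactly the minimum-degree conclusion of the lemma.

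The key estimate is that not too many edges are destroyed. Each time an $X$-vertex $x$ is deleted, it kills fewer than $\eps|Y|/4$ edges (its degree into the current set $Y_i$, which is below the threshold by the deletion rule), and each vertex of $X$ is deleted at most once, so the total number of edges removed in $X$-deletions is strictly less than $|X|\cdot\eps|Y|/4=\eps|X||Y|/4$; by symmetry the same bound holds for $Y$-deletions. Hence strictly fewer than $\eps|X||Y|/2$ edges are removed in total. Since $H$ started with at least $\eps|X||Y|$ edges, the final bipartite graph between $X'$ and $Y'$ still contains more than $\eps|X||Y|/2$ edges; in particular $X'$ and $Y'$ are nonempty. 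Finally, the number of edges between $X'$ and $Y'$ is at most $|X'||Y'|$, so $|X'||Y'|\ge\eps|X||Y|/2$, as required.

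I do not expect any real obstacle here; the only point needing a little care is that when a vertex is deleted, the bound $\eps|Y|/4$ (resp. $\eps|X|/4$) must be applied to its degree into the \emph{current} shrunk part rather than the original part, which is precisely what the deletion rule provides, and that since vertices are never re-added, summing these contributions over all deleted vertices on one side is bounded by the original size of that side times the threshold.
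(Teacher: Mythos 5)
Your proof is correct and essentially identical to the paper's: the same iterative deletion of low-degree vertices, the same accounting that each side loses at most $\eps|X||Y|/4$ edges, and the same final step deducing $|X'||Y'|\geq \eps|X||Y|/2$ from the number of surviving edges. The only cosmetic difference is your use of strict inequalities in the edge count, which is unnecessary but harmless.
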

\begin{proof}
We perform the following algorithm on the graph $H$ in order to eliminate the low-degree vertices. If a vertex $x\in X$ has degree smaller than $\eps |Y|/4$, we remove it from the graph and similarly, if a vertex $y\in Y$ has degree smaller than $\eps |X|/4$, we also remove it from the graph. We repeat this procedure until no vertex has degree smaller than $\eps |Y|/4$ (for the vertices of $X$), or $\eps |X|/4$ (for the vertices of $Y$). We denote by $X', Y'$ the sets of remaining vertices. By definition, these satisfy the minimum degree condition, so it remains to check that $|X'||Y'|$ is still large. The number of edges erased by removing vertices of $X$ can be bounded by $\frac{\eps}{4} |X||Y|$, and similarly for the vertices of $Y$. Hence, at least $\eps|X||Y|-2\cdot \frac{\eps}{4}|X||Y|\geq \eps|X||Y|/2$ edges remain in the graph. Therefore, we must have $|X'||Y'|\geq \frac{\eps}{2}|X||Y|$, completing the proof.
\end{proof}

We now discuss the proof of Theorem~\ref{thm:coarse erdos daykin}, which states that the maximum number of disjoint pairs in a set family $\cF$ of size $m=2^{(1/2+\delta)n}$ is $m^{2-c\delta/\log_2\delta^{-1}}$. Since the upper bound given by Theorem~\ref{thm:coarse erdos daykin} in the regime $\delta = O(\frac{\log n}{n})$ is worse than the trivial upper bound of $\binom{m}{2}$, we focus on the regime where $\delta\geq 10^4\log n/n$. Also, we restate and prove Theorem~\ref{thm:coarse erdos daykin} in the bipartite setting, since our proof works more naturally in this setting. This is in no way leads to loss of generality, since Theorem~\ref{thm:coarse erdos daykin} can be derived from Theorem~\ref{thm:coarse bipartite erdos daykin} by randomly splitting a family $\cF$ into two equally-sized parts $\cA, \cB$ without decreasing the density of disjoint pairs.

\begin{theorem}\label{thm:coarse bipartite erdos daykin}
Let $\cA, \cB$ be set families on the ground set $[n]$ of size $m=2^{(1/2+\delta)n}$, where $\delta$ is a parameter satisfying $10^4\frac{\log n}{n}\leq \delta\leq 1/2$. Then the density of their disjointness graph is at most $$d(\cA, \cB)\leq 2^{-\frac{\delta}{100\log \delta^{-1}}n}.$$
\end{theorem}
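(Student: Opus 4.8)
The plan is to follow the strategy described in the excerpt: find two collections $C_A = \{A_1,\dots,A_t\}\subseteq\cA$ and $C_B=\{B_1,\dots,B_s\}\subseteq\cB$ whose unions $U_A$ and $U_B$ are disjoint, such that each collection has a large common neighbourhood in the disjointness graph $G$ on $\cA\cup\cB$, and such that these unions are "saturated" — adding any further set from the relevant family does not increase the union by more than a tiny amount. Then one derives a contradiction (or the desired density bound) by counting: the common neighbourhood of $C_B$ consists of sets in $\cA$ disjoint from $U_B$, hence contained in $[n]\setminus U_B \supseteq U_A$ plus at most $\approx n - |U_A| - |U_B|$ "free" coordinates; the saturation property forces each such set to lie almost entirely inside $U_A$, bounding $|N_G(C_B)|$ from above, and comparing with the lower bound on this common neighbourhood coming from density yields the result.

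Concretely, first I would apply Lemma~\ref{lemma:cleaning} with $\eps = d(\cA,\cB)$ to pass to subfamilies $\cA'\subseteq\cA$, $\cB'\subseteq\cB$ with $|\cA'||\cB'|\geq \tfrac{\eps}{2}|\cA||\cB|$ and minimum cross-degree at least $\eps m/4$ on both sides; this lets us assume a clean degree condition. Set $k = \lfloor \sqrt{n/\log\delta^{-1}}\,\rfloor$ (or a similar value balancing the two error terms $2n/k$ and $k\log\delta^{-1}$, exactly as in the proof of Theorem~\ref{thm:singer sudan sparse}). Then I would build $C_A$ greedily: repeatedly pick a set from $\cA'$ that enlarges the current union by a non-negligible amount (say by at least $n/(100 k \log\delta^{-1})$ new elements), stopping after at most $O(k\log\delta^{-1})$ steps, at which point $U_A$ can no longer be enlarged substantially — meaning all but a $2^{-\Theta(n/k)}$ fraction of sets of $\cA'$ add few new elements to $U_A$, i.e. are "almost contained" in $U_A$. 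The same construction inside $\cB'$ (restricted to sets disjoint from $U_A$, which is still a large fraction since $U_A$ is not much bigger than $n/2$, using that common neighbourhoods are large) produces $U_B$ disjoint from $U_A$ with the analogous saturation. Using the bad-set union-bound trick from Section~\ref{sect:singer-sudan}, a random choice of $k$ sets gives unions covering a $2^{-2n/k}$-fraction of the family, so both $C_A$ and $C_B$ can be taken to have common neighbourhoods of size at least $(\eps/4)^k m \geq 2^{-O(\sqrt{n\log\delta^{-1}})}m$ while being nearly saturated.

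Finally I would run the counting step. Every $A\in N_G(C_B)$ is disjoint from $U_B$, hence $A\subseteq U_A \cup W$ where $W = [n]\setminus(U_A\cup U_B)$ has size $|W| = n - |U_A| - |U_B|$; and by saturation of $U_A$ (applied to the subfamily disjoint from $U_B$, which is exactly $N_G(C_B)$), all but a $2^{-\Theta(n/k)}$ fraction of these sets satisfy $|A\setminus U_A|$ small, so $|N_G(C_B)| \leq 2^{|U_A|}\cdot 2^{o(n)} + (\text{negligible})$. On the other hand $|N_G(C_B)|\geq 2^{-O(\sqrt{n\log\delta^{-1}})}m = 2^{-O(\sqrt{n\log\delta^{-1}})}2^{(1/2+\delta)n}$, and symmetrically $2^{|U_B|}\gtrsim$ a corresponding lower bound, forcing $|U_A|, |U_B| \geq (1/2 + \Omega(\delta))n$ up to $o(n)$ error — but $|U_A| + |U_B|\leq n$, a contradiction once $\delta \gg \log n/n$. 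Tracking the constants, the slack is exactly $2^{-\Theta(\delta n /\log\delta^{-1})}$, because the number of greedy steps is $\Theta(k\log\delta^{-1}) = \Theta(\sqrt{n\log\delta^{-1}})$ and each step's increment times this count must fit inside the $\Theta(\delta n)$ gap, which is what pins $k$ and hence the exponent. I expect the main obstacle to be the bookkeeping in the saturation/counting step: one must carefully ensure that the "bad" sets (those not almost-contained in $U_A$) are few enough that they do not spoil either the lower bound on $|N_G(C_B)|$ or the upper bound $2^{|U_A|}2^{o(n)}$, and that the two greedy constructions can be carried out simultaneously with $U_A \cap U_B = \varnothing$ — this requires checking that after fixing $U_A$, a positive (indeed $2^{-o(n)}$) fraction of $\cB'$ is still disjoint from it, which follows because $|U_A|\leq (1/2+o(1))n$ on the event that $C_A$ has a large common neighbourhood.
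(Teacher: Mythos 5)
The overall strategy you outline (build saturated collections $C_A, C_B$ via a greedy process, then count sets in a common neighbourhood) is indeed the paper's strategy, but a crucial technical device is missing from your version, and without it you would only recover a weaker exponent.

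The core problem is the number of greedy steps. You propose a constant per-step increment threshold of roughly $n/(100k\log\delta^{-1})$ and accept $O(k\log\delta^{-1})=O(\sqrt{n\log\delta^{-1}})$ steps. But the common neighbourhood in the cleaned disjointness graph shrinks by a factor of order $\eps$ per step, so after $T$ steps you can only guarantee $|N(A_1,\dots,A_T)|\gtrsim\eps^T|\cB|$; the final comparison of this lower bound against an upper bound of order $2^{n/2}$ (coming from the sets being confined to $[n]\setminus(U_A\cup U_B)$ plus a small slack) pins $T\log\eps^{-1}\lesssim\delta n$. With $T=\sqrt{n\log\delta^{-1}}$ this only forces $\log\eps^{-1}\lesssim\delta\sqrt{n/\log\delta^{-1}}$, i.e. $\eps\leq 2^{-O(\delta\sqrt{n/\log\delta^{-1}})}$, which is far weaker than the required $\eps\leq 2^{-\delta n/(100\log\delta^{-1})}$. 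Your remark that ``each step's increment times this count must fit inside the $\Theta(\delta n)$ gap'' does not close either: increment $\times$ count $=\Theta(n)$ in your setup, not $\Theta(\delta n)$, so the bookkeeping you sketch does not produce the claimed exponent.

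The paper's fix, which is the key point of the proof, is to let the growth target decay geometrically: at step $i$ the union is required to have size at least $(1/2-\rho^i)n$ with $\rho=0.9$, so the increment demanded at step $i$ is about $\rho^{i-1}n/10$, a geometrically decreasing quantity. This brings the union within $\rho^t n\approx\delta^2 n$ of $n/2$ after only $t=O(\log\delta^{-1})$ steps, and the paper proves (using the minimum-degree cleaning plus the common-neighbourhood constraint) that a maximal good tuple cannot be longer than $16\log_2\delta^{-1}$. With this step count, $\eps^{O(\log\delta^{-1})}$ against $2^{-\delta n}$ gives exactly the exponent $\delta n/\log\delta^{-1}$. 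A secondary issue: the saturation property ``every $A\in\cA_2$ has $|A\setminus U_A|\leq\rho^t n/10$'' is obtained in the paper by taking $t$ maximal and then re-cleaning the graph between $\cA_1$ and $N(A_1,\dots,A_t)$, so that any violating $A$ could be appended to the tuple. Your mixture of the random ``bad set'' argument from Section~\ref{sect:singer-sudan} with a constant-threshold greedy does not yield this clean dichotomy, and does not keep the common neighbourhood large enough for the downstream counting.
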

\begin{proof}
Let $G$ be the disjointness graph of families $\cA$ and $\cB$ and assume that its density $\eps$ satisfies $\eps> 2^{-\frac{\delta}{100\log 1/\delta}n}$. Our first step is to apply Lemma~\ref{lemma:cleaning} to clean the graph $G$ and obtain subfamilies $\cA_1\subseteq\cA$ and $\cB_1\subseteq \cB$ of size $|\cA_1||\cB_1|\geq \frac{\eps}{2} |\cA||\cB|$ which satisfy $|N_G(A)\cap \cB_1|\geq \eps|\cB|/4, |\cA_1\cap N_{G}(B)|\geq \eps|\cA|/4$ for all $A\in \cA_1, B\in \cB_1$. In particular, we have $|\cA_1|\geq \frac{\eps}{2}|\cA|$ and $|\cB_1|\geq \frac{\eps}{2}|\cB|$.
Finally, let $G_1$ be the induced subgraph of $G$ on the vertex set $\cA_1\cup \cB_1$.

Let $\rho=0.9$, and let $t$ be the maximal integer for which there exists a $t$-tuple $(A_1, \dots, A_t)$ of sets in $\cA_1$ with the following two properties:
\begin{itemize}
    \item $|N_{G_1}(A_1, \dots, A_i)|\geq \left(\frac{\eps}{64}\right)^i|\cB_1|$ for all $i=1, \dots, t$, and
    \item $\Big|A_1\cup\dots\cup A_i\Big|\geq \left(\frac{1}{2}-\rho^i\right) n$, for all $i=1, \dots, t$.
\end{itemize} 

First of all, note that $t\geq 1$ as every 1-tuple $(A_1)$ satisfies the properties trivially. We also claim that $t\leq 16\log_2 \delta^{-1}$. Assume to the contrary that $t>16\log_2 \delta^{-1}$ and let $N$ be the number of sets in $\cB_1$ disjoint from $A_1\cup\dots\cup A_{k}$, where $k=16\log_2\delta^{-1}$. There are at least $(\eps/64)^k |\cB_1|\geq (\eps/64)^{k+1}2^{(1/2+\delta)n}$ such sets. On the other hand, all these sets are contained in $[n]\backslash (A_1\cup\dots\cup A_k)$, which is a set of size at most $(1/2+\rho^k)n$ by the second property, implying 
\[(\eps/64)^{k+1}2^{(1/2+\delta)n}\leq N\leq  2^{(1/2+\rho^k)n}.\]
However, the following short computation shows that our parameters have been chosen in such a way that this inequality is impossible. Observe that $\rho^k=0.9^{8\cdot 2\log_2\delta^{-1}}\leq (1/2)^{2\log_2\delta^{-1}}\leq \delta^2\leq \delta/2$, where we used $0.9^8\approx 0.43<1/2$. Therefore, the above inequality implies $(\eps/64)^{k+1}\leq 2^{-\delta n/2}$. Replacing $k+1$ by $2k$ and taking logarithms, we find that $2k\log_2(64/\eps)\geq \delta n/2$, where $\log_2(1/\eps)\leq \frac{\delta n}{100\log_2\delta^{-1}}$ by assumption. Plugging in $k=16\log_{2}\delta^{-1}$, we find that $\frac{32}{100}\delta n+32\log_{2}\delta^{-1}\cdot \log_2 64\geq \delta n/2$. But this is impossible, since the second term is bounded by $6\cdot 32\log_{2}\delta^{-1}\leq \frac{192}{10^4}\delta n$ (where we have used $\delta\geq 10^4\frac{\log n}{n}$) and $\frac{32}{100}+\frac{192}{10^4}<1/2$. Hence, we conclude $t\leq 16\log_2 \delta^{-1}$.

Each vertex in $N_{G_1}(A_1, \dots, A_t)$ sends at least $\eps |\cA|/4\geq \eps |\cA_1|/4$ edges to $\cA_1$. Hence, the density of the subgraph induced between $\cA_1$ and $N_{G_1}(A_1, \dots, A_t)$ is at least $\eps/4$. By applying Lemma~\ref{lemma:cleaning} to this subgraph we can find sets $\cA_2\subseteq \cA_1$ and $\cB_2\subseteq N_{G_1}(A_1, \dots, A_t)$ such that \[|\cA_2||\cB_2|\geq \frac{\eps}{8}|\cA_1||N_{G_1}(A_1, \dots, A_t)|\geq \Big(\frac{\eps}{64}\Big)^{t+1} |\cA_1||\cB_1|\geq \Big(\frac{\eps}{64}\Big)^{t+2} |\cA||\cB|\]  and the subgraph $G_2$ induced by $\cA_2\cup \cB_2$ satisfies $|N_{G_2}(A)\cap \cB_2|\geq \eps |N_{G_1}(A_1,\dots,A_t)|/16, |\cA_2\cap N_{G_2}(B)|\geq \eps |\cA_1|/16$ for all $A\in \cA_2, B\in \cB_2$. Furthermore, we have $|\cA_2|\geq (\eps/8) |\cA_1|\geq (\eps/8)^2 |\cA|$.

Observe that for every $A\in \cA_2$, we have $|A\backslash (A_1\cup\dots\cup A_t)|\leq (\rho^t-\rho^{t+1})n=\rho^t n/10$, since otherwise $A$ could have been added to the $t$-tuple $(A_1, \dots, A_t)$ and make a larger one, contradicting the maximality of $t$. Moreover, $\Big|\bigcup_{i=1}^t A_i\Big|\leq (1/2-\rho^{t+1})n$, since otherwise any set $A\in \cA_2$ could be added to the $t$-tuple to extend it.

Now, we consider the other side. Let $s$ be maximal for which there exists an $s$-tuple $(B_1, \dots, B_s)$ of elements of $\cB_2$ satisfying the following two properties:
\begin{itemize}
    \item $|N_{G_2}(B_1, \dots, B_i)|\geq \left(\frac{\eps}{64}\right)^i|\cA_2|$ for all $i=1, \dots, s$, and
    \item $\Big|B_1\cup\dots\cup B_i\Big|\geq \left(\frac{1}{2}-\rho^i\right) n$, for all $i=1, \dots, s$.
\end{itemize}
We now wish to use a similar argument as before to conclude that $s\leq 16\log_2\delta^{-1}$. The only difference from the previous calculation is that, if $k=16\log_2\delta^{-1}$, we now have $(\eps/64)^k|\cA_2|\geq (\eps/64)^{k+2}|\cA|$ sets in $\cA_2$ disjoint from $B_1, \dots, B_k$. However, since we end up replacing this $k+2$ with $2k$ anyways, this has no effect on the chain of inequalities presented above.

Furthermore, every vertex of $N_{G_2}(B_1, \dots, B_s)$ sends at least $\eps|\cB_2|/16$ edges to $\cB_2$. Applying Lemma~\ref{lemma:cleaning} to the subgraph induced by $N_{G_2}(B_1, \dots, B_s)$ and $\cB_2$, which has density at least $\eps/16$, we  find subsets $\cA_3\subseteq N_{G_2}(B_1, \dots, B_s)$ and $\cB_3\subseteq \cB_2$ such that the subgraph $G_3$ induced on $\cA_3\cup \cB_3$ satisfies
\[|\cA_3||\cB_3|\geq \frac{\eps}{32}|N_{G_3}(B_1, \dots, B_s)||\cB_2|\geq \Big(\frac{\eps}{64}\Big)^{s+1}|\cA_2||\cB_2|\geq \Big(\frac{\eps}{64}\Big)^{s+t+3}|\cA||\cB|\] and $|N_{G_3}(A)\cap \cB_3|\geq \eps |\cB_3|/64, |\cA_3\cap N_{G_3}(B)|\geq \eps |N_{G_2}(B_1, \dots, B_s)|/64$ for all $A\in \cA_3, B\in \cB_3$.

Finally, observe again that each set $B\in \cB_3$ satisfies $|B\backslash (B_1\cup\dots\cup B_s)|\leq \rho^s n/10$ and $\Big|\bigcup_{i=1}^s B_i\Big|\leq (1/2-\rho^{s+1})n$, for otherwise $B$ could have been added to extend the $s$-tuple $(B_1, \dots, B_s)$.

Now comes the endgame of our proof. We assume that $t\leq s$, the other case can be handled in an identical manner. Write $U_A=\bigcup_{i=1}^t A_i, U_B=\bigcup_{i=1}^s B_i$ and recall that $|U_A|, |U_B|\geq (1/2-\rho^t) n$, $|U_A|\leq (1/2-\rho^{t+1})n$ and $U_A\cap U_B=\varnothing$. Every set $A\in \cA_3$ is contained in $[n]\backslash U_B$ and satisfies $|A\backslash U_A|\leq \rho^tn/10$. Hence, the size of $\cA_3$ can be upper bounded by the total number of sets satisfying these two conditions, which is
\begin{align*}
|\cA_3| \leq 2^{|U_A|}\sum_{i=0}^{\rho^tn/10}\binom{n-|U_A|-|U_B|}{i}\leq 2^{(1/2-\rho^{t+1})n}\sum_{i=0}^{\rho^tn/10}\binom{2\rho^t n}{i}\leq 2^{(1/2-\rho^{t+1})n+2\rho^t n H(\frac{1}{20})}.
\end{align*}
In the last inequality  we  used the estimate $\sum_{i=1}^a\binom{b}{i}\leq 2^{H(a/b)b}$, where  $H(x)=-x\log_2 x-(1-x)\log_2(1-x)$ is the binary entropy function. This estimate holds for every $a\leq b/2$, which is satisfied in our case, where $a=\rho^tn/10$ and $b=2\rho^t n$. But observe that $1/2-\rho^{t+1}+2\rho^t H(1/20)=1/2+\rho^t (-\rho+2H(1/20))<1/2$ (using that $H(1/20)\approx 0.29<\rho/2$), which means that $|\cA_3|\leq 2^{n/2}$. Hence, we get \[2^{n/2}\geq |\cA_3|\geq \left(\frac{\eps}{64}\right)^{s+t+3}|\cA|\geq \left(\frac{\eps}{64}\right)^{35\log \delta^{-1}} 2^{(1/2+\delta)n}.\]
We claim that this is again impossible since our parameters have been chosen such that $(\eps/64)^{35\log_2 \delta^{-1}}2^{\delta n}>1$. To see this, we take logarithms and use $\eps\geq 2^{-\frac{\delta}{100\log_2 1/\delta}n}$ to get
$35\log_2 \delta^{-1} \log_2 \eps + 35 \log_2 \delta^{-1} \log_2 (1/64)+\delta n\geq -\frac{35}{100}\delta n-\frac{35\cdot 6}{10^4} \delta n+\delta n>0$. This shows that $|\cA_3|\leq 2^{n/2}$ is impossible, thus deriving the final contradiction to the assumption $\eps\geq 2^{-\frac{\delta}{100\log_2 1/\delta}n}$ and completing the proof of the theorem.
\end{proof}

\subsection{Entropy methods}

In this section, we present the second approach to Theorem \ref{thm:coarse erdos daykin}, which uses entropy methods. 
The proof is similar in spirit to the previous one, the main novelty being Lemma~\ref{lemma:entropy expected union} which uses entropy to lower-bound the expected size of the union $U_A=A_1\cup\dots\cup A_k$ of $k$ random sets from a large family $\cF$. This lemma is then combined with the dependent random choice method, which allows us to lower-bound the size of the common neighbourhood of $k$ random sets in the disjointness graph. The final contradiction then comes by comparing the size of the union $U_A$ and the number of sets from $\cF$ contained in its complement, i.e. the size of the common neighbourhood of $A_1, \dots, A_k$.

This approach can also be extended to give a proof of Theorem \ref{thm:fine-grained daykin erdos}. In particular, we obtain the following theorem, which implies both.

\begin{theorem}\label{thm:entropy daykin erdos}
Let $\cA, \cB\subset 2^{[n]}$ such that $d(\cA, \cB)\geq 2^{-\theta n}$, where $\theta\in (0, 1)$. Then,  $$|\cA||\cB|\leq \exp(O(\max\{\theta n, \sqrt{\theta n\log_2 n}\}))2^{n(1+\theta\log_2\theta^{-1})}.$$
\end{theorem}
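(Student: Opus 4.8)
The plan is to mimic the combinatorial proof of Theorem~\ref{thm:coarse bipartite erdos daykin}, but to replace the iterative "bad set" argument by a single clean application of two entropy-type inequalities. Write $d(\cA,\cB)\geq \eps = 2^{-\theta n}$, and set $k = \lceil \sqrt{n/\log_2 n}\,\rceil$ (or more generally $k$ chosen to balance the two error terms below). First I would record the dependent-random-choice style estimate: if $A_1,\dots,A_k$ are chosen uniformly and independently from $\cA$, then by Jensen's inequality the expected size of their common neighbourhood in the disjointness graph $G$ is
\[
\E\big[|N_G(A_1,\dots,A_k)|\big] \;=\; \sum_{B\in\cB}\Big(\tfrac{\deg_G(B)}{|\cA|}\Big)^{k} \;\geq\; \eps^{k}|\cB|,
\]
and, as in the proof of Theorem~\ref{thm:singer sudan sparse}, this expectation being at least (a bit more than) $2$ forces the existence of a choice of $A_1,\dots,A_k$ with $|N_G(A_1,\dots,A_k)|\geq \tfrac12\eps^k|\cB|$. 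All such $B$ are contained in $[n]\setminus U$, where $U = A_1\cup\dots\cup A_k$, so $\tfrac12\eps^k|\cB| \leq 2^{\,n-|U|}$.

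The heart of the argument is the promised Lemma~\ref{lemma:entropy expected union}, giving a lower bound on $\E|U|$ in terms of $|\cA|$. Here is the mechanism I would use: let $A$ be a uniformly random element of $\cA$ and let $\mathbf{1}_A\in\{0,1\}^n$ be its indicator vector, so $H(\mathbf{1}_A) = \log_2|\cA|$, and by subadditivity $\sum_{j=1}^n H((\mathbf{1}_A)_j)\geq \log_2|\cA|$. Writing $p_j = \Pb[j\in A]$, this says $\sum_j H(p_j)\geq \log_2|\cA|$. Now for $k$ independent copies $A_1,\dots,A_k$ the coordinate $j$ lies in $U$ with probability $1-\prod_i(1-p_j) = 1-(1-p_j)^k$ (independence across copies of the same coordinate), so
\[
\E|U| \;=\; \sum_{j=1}^n \big(1-(1-p_j)^{k}\big).
\]
The remaining analytic point is the pointwise inequality $1-(1-p)^{k}\geq \tfrac{1}{2} - c\,\tfrac{\log_2 k}{k}\cdot\tfrac{1}{?}$ — more precisely, one shows $1-(1-p)^k \geq \tfrac12$ as soon as $H(p)$ is not too small, and quantitatively that there is a constant $c$ with $1-(1-p)^k \geq \tfrac12 H(p) - (\text{small})$ fails, so instead I would argue: since $\min\{p, 1-(1-p)^k/ \text{(something)}\}$... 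Concretely, split the coordinates into those with $p_j \geq 1/k$ (where $1-(1-p_j)^k \geq 1-e^{-1} > 1/2$) and those with $p_j < 1/k$ (where $H(p_j) \leq H(1/k) = O(\tfrac{\log_2 k}{k})$, so they contribute at most $O(\tfrac{n\log_2 k}{k})$ to $\sum_j H(p_j)$). If fewer than $\log_2|\cA| - O(n\log_2 k / k)$ of the "large" coordinates... this gives $\E|U| \geq \tfrac12\big(\log_2|\cA| - O(n\log_2 k/k)\big)$. Hmm — that only gives $|U| \gtrsim \tfrac12\log_2|\cA|$, which is exactly what one wants: combined with $\tfrac12\eps^k|\cB|\leq 2^{n-|U|}$ and averaging, we get $\log_2|\cB| - k\theta n - 1 \leq n - \tfrac12(\log_2|\cA| - O(n\log_2 k/k))$, i.e. $\log_2(|\cA||\cB|) \lesssim n + k\theta n + O(n\log_2 k/k)$, wait this is off by the factor on $\log_2|\cA|$.

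Let me restate the extraction cleanly: the correct target is $\log_2 |U| \geq \log_2|\cA| \cdot(1 - o(1))$ is too strong; instead the construction in Theorem~\ref{thm:fine-grained daykin erdos} shows the truth is $|U|\approx n/2 + O(n\theta\log\theta^{-1})$ while $\log_2|\cA|\approx n/2$, so I should aim for $\E|U| \geq \log_2|\cA| - O(\theta n\log_2\theta^{-1}) - O(\sqrt{\theta n\log_2 n})$ by choosing $k\asymp \sqrt{(\log_2 n)/\theta}$ and using $H(1/k)\cdot n/? $. Then plugging into $\tfrac12\eps^k|\cB|\leq 2^{n-|U|}$, taking the best $A_1,\dots,A_k$, and rearranging yields $\log_2(|\cA||\cB|) \leq n + k\theta n + O(\text{error}) \leq n(1+\theta\log_2\theta^{-1}) + O(\max\{\theta n,\sqrt{\theta n\log_2 n}\})$ after optimizing $k$, which is exactly the claimed bound. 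The main obstacle I anticipate is precisely this last bookkeeping step — getting the entropy lower bound on $\E|U|$ to have the form "$\log_2|\cA|$ minus an error that is genuinely $O(\theta n\log_2\theta^{-1})$" rather than a lossy constant-factor version — since that is what separates the tight Theorem~\ref{thm:fine-grained daykin erdos} from the cruder Theorem~\ref{thm:coarse erdos daykin}; handling the coordinates with $p_j$ near $1$ (which also have small $H(p_j)$ but contribute nearly $1$ to $\E|U|$) in the right way, and choosing the split threshold as a function of $\theta$ rather than $k$, is the delicate point.
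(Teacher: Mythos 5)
Your proposal has the right ingredients (an entropy lower bound on $\E|U|$, plus a dependent-random-choice estimate on the common neighbourhood), but there is a genuine gap at the ``combine and average'' step. You observe, correctly, that for any fixed choice of $A_1,\dots,A_k$, every $B\in N_G(A_1,\dots,A_k)$ is a subset of $[n]\setminus U$, so $|N_G(A_1,\dots,A_k)|\le 2^{n-|U|}$. Combined with $\E|N_G(A_1,\dots,A_k)|\ge \eps^k|\cB|$ this gives $\eps^k|\cB|\le \E\bigl[2^{n-|U|}\bigr]$. But you cannot then substitute $\E|U|$ for $|U|$: since $u\mapsto 2^{-u}$ is convex, Jensen gives $\E\bigl[2^{-|U|}\bigr]\ge 2^{-\E|U|}$, which is the wrong direction, and $\E\bigl[2^{n-|U|}\bigr]$ can be exponentially larger than $2^{\,n-\E|U|}$ without any concentration information. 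Equally, the Markov-type argument from Theorem~\ref{thm:singer sudan sparse} only shows that \emph{some} $k$-tuple has $|N_G|$ large, while the entropy bound shows that \emph{some} $k$-tuple has $|U|$ large; these two events are negatively correlated and there is no reason for them to intersect in a single-stage random choice. In Theorem~\ref{thm:singer sudan sparse} the paper escapes this via the ``bad set'' union bound, which yields a high-probability (probability $\ge 1-2^{-n}$) statement on one side; but that argument only controls the number of sets of $\cA$ covered by $U$, and if you try to extract a bound on $|U|$ of the required precision from it, you end up with an $O(\sqrt{\theta}\,n)$ error term rather than the target $O(\theta n\log_2\theta^{-1})$.

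The paper's proof uses \emph{two distinct parameters}, which is the key idea you are missing: a parameter $t\asymp\sqrt{\log_2 n/(\theta n)}$ for a dependent-random-choice step on the $\cB$-side, producing a random subfamily $\cA_0=N(B_1,\dots,B_t)\subseteq\cA$, and a separate parameter $k=\log_2\theta^{-1}+O(1)$ for the union argument inside $\cA_0$. The first stage flips the quantifier: instead of ``some $k$-tuple has $|N|$ large'', it produces an $\cA_0$ in which \emph{most} (all but a $1/n$-fraction of) $k$-tuples have $|N|\ge 2^s$. The second stage, via Lemma~\ref{lemma:entropy expected union}, shows that at least a $1/n$-fraction of $k$-tuples in $\cA_0^k$ have $|U|\ge n-s+1$, and these two majorities overlap. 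You conflate $t$ and $k$ (e.g.\ proposing $k\asymp\sqrt{\log_2 n/\theta}$, which is roughly the paper's $t$, while simultaneously wanting $Cn/2^k=O(\theta n)$, which forces $k\approx\log_2\theta^{-1}$); these are incompatible choices for a single parameter. Finally, the pointwise inequality you sketch (splitting at $p_j\gtrless 1/k$) loses a factor of $2$ on $\log_2|\cA|$, as you notice; the correct tool is the sharper pointwise bound $1-(1-p)^k\ge H(p)-C/2^k$ of Lemma~\ref{lemma:entropy inequality}, which gives $\E|U|\ge\log_2|\cA|-Cn/2^k$ with no constant-factor loss.
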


\noindent 
Note that Theorem~\ref{thm:coarse erdos daykin} can be deduced from this theorem by setting $\theta=\delta/\log\delta^{-1}$. We prepare the proof with a technical inequality.

\begin{lemma}\label{lemma:entropy inequality}
There exists an absolute constant $C>0$ such that for every positive integer $k$ and  $p\in [0, 1]$, we have
\[H(p)=p\log_2\frac{1}{p}+(1-p)\log_2\frac{1}{1-p}\leq 1-(1-p)^k+\frac{C}{2^k}.\] 
\end{lemma}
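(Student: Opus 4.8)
The plan is to verify the inequality
\[
H(p) \le 1 - (1-p)^k + \frac{C}{2^k}
\]
by splitting into two ranges of $p$ according to how $(1-p)^k$ compares to a constant. The key quantitative fact driving everything is the standard quadratic bound on the binary entropy near its maximum: $H(p) \le 1 - \frac{2}{\ln 2}\bigl(p - \tfrac12\bigr)^2$, equivalently $1 - H(p) \ge c_0 (1-2p)^2$ for an absolute constant $c_0 > 0$. So the content of the lemma is that $(1-p)^k - \frac{C}{2^k} \le 1 - H(p)$, and it suffices to find $C$ with $(1-p)^k - c_0(1-2p)^2 \le C/2^k$ for all $p \in [0,1]$ and all $k \ge 1$.

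First I would handle $p$ bounded away from $0$, say $p \ge 1/4$. Here $(1-p)^k \le (3/4)^k$, and we simply need $(3/4)^k \le C/2^k$, i.e. $(3/2)^k \le C$ — which of course fails for large $k$, so this crude split is not enough. Instead, in this regime I use the entropy bound directly: $1 - H(p) \ge c_0(1-2p)^2 \ge 0$, and I need $(1-p)^k \le 1 - H(p) + C/2^k$. When $p \ge 1/4$ is also bounded away from $1/2$ the term $c_0(1-2p)^2$ is a positive constant while $(1-p)^k \le (3/4)^k \to 0$, so the inequality holds with $C = O(1)$; and when $p$ is close to $1/2$, say $|p - 1/2| \le \eta$ for small constant $\eta$, we have $(1-p)^k \le (1/2 + \eta)^k$ and we only need $(1/2+\eta)^k \le 1 + C/2^k$, which holds trivially for $\eta < 1/2$ since the left side is at most $1$. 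So the only delicate range is $p$ near $0$.

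For small $p$, write $p = x/k$ with $x \in [0, k/4]$ (covering $p \le 1/4$). Then $(1-p)^k = (1 - x/k)^k \le e^{-x}$, while $H(p) = H(x/k)$. Using $H(p) \le p\log_2(e/p)$ (the standard bound $H(p) \le p\log_2(1/p) + p\log_2 e$), we get $H(p) \le \frac{x}{k}\log_2\frac{ek}{x}$. The inequality to check becomes $\frac{x}{k}\log_2\frac{ek}{x} + e^{-x} \le 1 + C/2^k$, and since $e^{-x} \le 1$ this reduces to $\frac{x}{k}\log_2\frac{ek}{x} \le C/2^k$ — which again looks false. The resolution is that I should \emph{not} throw away the gap $1 - e^{-x}$: the correct target is $H(p) \le \bigl(1 - (1-p)^k\bigr) + C/2^k$, and $1 - (1-p)^k \ge 1 - e^{-x} \ge x(1 - x/2)\,$ is genuinely of order $x$ for bounded $x$, comparable to $H(x/k)$ up to the logarithmic factor $\log_2(ek/x)$. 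So I would instead prove, for $x \le k/4$, that $H(x/k) \le (1 - e^{-x}) + C/2^k$ by noting that $H(x/k) = \frac{x}{k}\log_2(k/x) + O(x/k)$ and $1 - e^{-x} \ge \min(x/2, 1/2)$; for $x \ge 1$ the right side is $\ge 1/2$ while the left side is $\frac{x}{k}\log_2(ek/x) \le \frac14\log_2(4e) = O(1)$ — still not automatically $\le$. The genuinely robust approach, which I expect to be the one that works cleanly, is to bound $H(p) - (1 - (1-p)^k)$ directly: set $f(p) = 1 - (1-p)^k - H(p)$, show $f(0) = 0$, $f(1) = -\infty < $ anything, compute $f'$, locate the single interior minimum $p^*$, and bound $-f(p^*)$ from below. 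One finds $f'(p) = k(1-p)^{k-1} - \log_2\frac{1-p}{p}$; the minimum occurs where $k(1-p)^{k-1} = \log_2\frac{1-p}{p}$, which forces $p^* = \Theta\bigl(2^{-k}\bigr)$ up to polynomial-in-$k$ factors, and then $-f(p^*) = H(p^*) - (1-(1-p^*)^k) = O(p^* \log(1/p^*)) = O(k 2^{-k}) = O(2^{-k/2})$, say, so the bound holds with any constant $C$ once we allow ourselves the $\frac{C}{2^k}$ slack generously (or we sharpen to $\frac{C}{2^k}$ exactly by a more careful estimate of $p^*$).

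The main obstacle is precisely this last calculus estimate: pinning down that the maximum of $H(p) - \bigl(1 - (1-p)^k\bigr)$ over $p \in [0,1]$ is $O(2^{-k})$ rather than merely $o(1)$. Everything else — the entropy bounds $H(p) \le p\log_2(e/p)$ and $1 - H(p) \ge c_0(1-2p)^2$, and the inequality $(1-p)^k \le e^{-pk}$ — is routine. So in the write-up I would: (1) reduce via the stationary-point analysis of $f(p) = 1 - (1-p)^k - H(p)$ to estimating the unique interior root $p^*$ of $f'(p) = 0$; (2) show $2^{-k-1} \le p^* \le k \cdot 2^{-k}$ or similar two-sided bounds by analyzing $k(1-p)^{k-1} = \log_2\frac{1-p}{p}$; (3) conclude $\max_p(-f(p)) = -f(p^*) = O(p^* \log_2(1/p^*)) = O(k^2 2^{-k}) \le C' 2^{-k/2}$; and then (4) absorb constants. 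Since the statement only asks for \emph{some} absolute constant $C$ and the $\frac{C}{2^k}$ term, even the weaker bound $-f(p^*) = O(2^{-k/2})$ combined with replacing $k$ by $k/2$ at the start suffices, so I would not grind for the sharp constant.
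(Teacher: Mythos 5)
Your overall strategy (locate the interior critical point of $f(p)=1-(1-p)^k-H(p)$ and bound $-f$ there) is sound in principle and differs from the paper's approach, which splits on $p\leq 1/k$ versus $p>1/k$ and uses the Bonferroni inequality $1-(1-p)^k\geq kp-\binom{k}{2}p^2$ together with elementary entropy bounds. However, there is a genuine gap in your step (3). You estimate $-f(p^*)=H(p^*)-\bigl(1-(1-p^*)^k\bigr)\leq H(p^*)=O\bigl(p^*\log_2(1/p^*)\bigr)$, discarding the subtracted term entirely. With $p^*=\Theta(2^{-k})$ this gives only $O(k\,2^{-k})$, which is not $O(2^{-k})$, and the proposed patch of ``replacing $k$ by $k/2$'' does not repair it: proving the inequality for $k/2$ yields an error term $C/2^{k/2}$, which is \emph{larger} than $C/2^k$, while $1-(1-p)^{k/2}\leq 1-(1-p)^k$ only weakens the conclusion further. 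So your write-up as stated does not reach the required $C/2^k$.

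What is missing is the cancellation of the leading-order $kp^*$ terms. Since $\log_2(1/p^*)\approx k$, one has $H(p^*)\approx p^*\log_2(e/p^*)=p^*\bigl(k+\log_2 e\bigr)+O(p^{*2})$, while $1-(1-p^*)^k\approx kp^*-\binom{k}{2}p^{*2}$, so the difference is $p^*\log_2 e+O(k^2p^{*2})=O(2^{-k})$ — the $kp^*$ pieces cancel, leaving a genuinely $2^{-k}$-sized remainder. Your bound drops this cancellation by throwing away $1-(1-p^*)^k$, so it loses a factor $k$. The paper makes exactly this cancellation explicit: after bounding $(1-p)\log_2\frac{1}{1-p}\leq 2p$ and applying Bonferroni, it must show $p\log_2\frac{1}{p}\leq (k-2)p-\binom{k}{2}p^2+C/2^k$, and in the sub-regime $p\leq 1/k^2$ this reduces to estimating $p\log_2\frac{1}{2^{k-3}p}\leq C/2^k$, whose maximum over $p$ is precisely of order $2^{-k}$. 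If you keep your stationary-point route, you need to carry out the same second-order cancellation in estimating $-f(p^*)$, rather than bounding $H(p^*)$ in isolation.
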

\begin{proof}
We show that $C=e^{40}$ suffices. Since we always have $H(p)\leq 1$, the statement is only interesting when $2^k\geq C$, so we may assume that $k> 40$. For the same reason, we may also assume that $p\leq 1/2$. We split the proof into two cases, based on whether $p\leq 1/k$ or $p\geq 1/k$.

\medskip
\noindent
\textbf{Case 1: $p\leq 1/k$.} On the left-hand side, we bound the second term as $(1-p)\log_2\frac{1}{1-p}\leq \log_2\frac{1}{1-p}\leq \log_2(1+2p)\leq 2p$. On the other hand, using the Bonferroni inequalities (or equivalently the inclusion-exclusion inequalities), we know that $(1-p)^k\leq 1-kp+\binom{k}{2}p^2$ and so $1-(1-p)^k\geq 1-(1-kp+\binom{k}{2}p^2)\geq kp-\binom{k}{2}p^2$. These two bounds reduce our inequality to showing that 
\[p\log_2\frac{1}{p}\leq (k-2)p-\binom{k}{2}p^2+\frac{C}{2^k}.\]
If $p\geq 1/k^2$, or equivalently $\log_2 1/p\leq 2\log_2 k$, this inequality is easy to show since $p\cdot 2\log_2 k\leq p\cdot \frac{k-3}{2}\leq p(k-2)-\frac{k(k-1)}{2}p^2$. In the first step, we used that $k>40$ which implies $2\log_2 k\leq (k-3)/2$, while in the second step, we  used that $p\leq 1/k$.
If $p\leq 1/k^2$, we have $\binom{k}{2}p^2\leq p$ and so it suffices to show $p\log_2\frac{1}{p}-p(k-3)\leq C/2^k$, which follows from $\log_2\frac{1}{2^{k-3} p}\leq \frac{C}{2^k p}$ since $C>8$.

\medskip
\noindent
\textbf{Case 2: $p>1/k$.} Here, we have three subcases, all of which are easily solved.

\medskip
\textit{Subcase 2.1: $\frac{1}{k}<p\leq \frac{1}{10}$.} We have $H(p)\leq H\big(\frac{1}{10}\big)<\frac{1}{2}$ and $1-(1-p)^k\geq 1-\big(1-\frac{1}{k}\big)^k\geq 1-\frac{1}{e}>\frac{1}{2}$.

\medskip
\textit{Subcase 2.2: $\frac{1}{10}<p\leq \frac{1}{2}-\frac{5}{k}$.} From $p\geq 1/10$ we get that $1-(1-p)^k\geq 1-0.9^k$. 
Since $H'(1/2)=0$ and $H''(p)=-\frac{1}{p(1-p)\ln 2}\leq -\frac{4}{\ln 2}$, we have that the function $f(p)=
\big(H(1/2)-\frac{2}{\ln 2}\left(1/2-p\right)^2\big)-H(p)$ satisfies $f(1/2)=f'(1/2)=0$ and $f''(p)\geq 0$. Therefore $H(p)$ 
must lie below the parabola $H(1/2)-\frac{2}{\ln 2}\left(1/2-p\right)^2$, i.e. we have
\[H(p)\leq 1-\frac{2}{\ln 2}\left(\frac{5}{k}\right)^2\leq 1-0.9^k\leq 1-(1-p)^k,\]
where we used that $\frac{2}{\ln 2}\left(\frac{5}{k}\right)^2\geq 0.9^k$ for all $k$.

\medskip
\textit{Subcase 2.3: $\frac{1}{2}-\frac{5}{k}\leq p\leq \frac{1}{2}$.} In this case we have $H(p)\leq 1$ and $1-(1-p)^k+\frac{C}{2^k}\geq 1-\frac{(1+10/k)^k}{2^k}+\frac{C}{2^k}\geq 1$, since $C>e^{40}$.
\end{proof}

The main ingredient we need for the proof of Theorem~\ref{thm:entropy daykin erdos} is the following covering lemma, which lower bounds the expected size of the union $U$ of $k$ random sets chosen from a large set family $\cF$ of size $|\cF|=2^{\alpha n}$.
The intuition behind this lemma is that the expected size of this union will be minimized in case the family $\cF=2^{[\alpha n]}$ for some $\alpha\in (0, 1)$. In this case, the expected size of the union $U$ of $k$ random sets in $\cF$ will be $(1-2^{-k})\alpha n$. The following lemma shows that a similar statement is true without assumptions on the structure of the family $\cF$.

\begin{lemma}\label{lemma:entropy expected union}
Let $\cF\subset 2^{[n]}$ of size $2^{\alpha n}$. If $F_1, \dots, F_k$ are random elements of $\cF$, chosen independently from the uniform distribution, then
$$\E\Big[\big|{\textstyle \bigcup\limits_{i=1}^k} F_i\big|\Big]\geq \alpha n-\frac{Cn}{2^k},$$ where $C$ is the absolute constant coming from Lemma~\ref{lemma:entropy inequality}.
\end{lemma}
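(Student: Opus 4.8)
The plan is to pass to the language of entropy. Let $F$ be a uniformly random element of $\cF$, so that the binary entropy satisfies $H(F)=\log_2|\cF|=\alpha n$. Identify $F$ with its indicator vector $(X_1,\dots,X_n)\in\{0,1\}^n$, where $X_j=\mathds{1}[j\in F]$, and write $p_j=\Pb[j\in F]$. By subadditivity of entropy,
\[
\alpha n \;=\; H(F)\;=\;H(X_1,\dots,X_n)\;\leq\;\sum_{j=1}^n H(X_j)\;=\;\sum_{j=1}^n H(p_j).
\]

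Next I would compute the expected size of the union by linearity. Since $F_1,\dots,F_k$ are independent copies of $F$, for each coordinate $j$ we have $\Pb[j\notin F_1\cup\dots\cup F_k]=\prod_{i=1}^k\Pb[j\notin F_i]=(1-p_j)^k$, hence
\[
\E\Big[\big|{\textstyle\bigcup_{i=1}^k}F_i\big|\Big]\;=\;\sum_{j=1}^n \Pb\big[j\in{\textstyle\bigcup_{i=1}^k}F_i\big]\;=\;\sum_{j=1}^n\big(1-(1-p_j)^k\big).
\]

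Finally I would combine the two displays using the key technical inequality already established in Lemma~\ref{lemma:entropy inequality}, which asserts $H(p)\leq 1-(1-p)^k+C/2^k$ for all $p\in[0,1]$. Applying it with $p=p_j$ and summing over $j\in[n]$ yields
\[
\alpha n\;\leq\;\sum_{j=1}^n H(p_j)\;\leq\;\sum_{j=1}^n\big(1-(1-p_j)^k\big)+\frac{Cn}{2^k}\;=\;\E\Big[\big|{\textstyle\bigcup_{i=1}^k}F_i\big|\Big]+\frac{Cn}{2^k},
\]
which rearranges to the claimed bound. In this argument there is essentially no remaining obstacle: the real work — controlling $H(p)$ from above by $1-(1-p)^k$ up to the additive error $C/2^k$ — has been isolated into Lemma~\ref{lemma:entropy inequality}, and all that is needed here is to glue that estimate to the entropy subadditivity bound and the per-coordinate computation of the union probability. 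The only point worth double-checking is that the coordinates may be arbitrarily correlated under $\mu$, but subadditivity of entropy holds regardless, so nothing is lost.
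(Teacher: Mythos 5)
Your proof is correct and follows essentially the same route as the paper: linearity of expectation to reduce to per-coordinate probabilities, the inequality $H(p)\leq 1-(1-p)^k+C/2^k$ from Lemma~\ref{lemma:entropy inequality}, and subadditivity of entropy to lower bound $\sum_j H(p_j)$ by $\log_2|\cF|=\alpha n$. The only (cosmetic) difference is that you arrange the chain of inequalities starting from $\alpha n$ rather than from $\E\big[|\bigcup F_i|\big]$; also, the stray reference to ``$\mu$'' at the end should just read ``the uniform distribution on $\cF$,'' but this does not affect the argument.
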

\begin{proof}
For $i\in [n]$, let  $p_i$ denote the proportion of sets in $\cF$ which contain $i$. By linearity of expectation, $$\E\Big[\big|{\textstyle \bigcup\limits_{i=1}^k} F_i\big|\Big]=\sum_{i=1}^n \Pb\Big[i\in {\textstyle \bigcup\limits_{i=1}^k} F_i\Big]=\sum_{i=1}^n 1-(1-p_i)^k.$$
By Lemma~\ref{lemma:entropy inequality}, we have $1-(1-p_i)^k\geq H(p_i)-C/2^k$ and therefore $$\E\Big[\big|{\textstyle \bigcup\limits_{i=1}^k} F_i\big|\Big]\geq \sum_{i=1}^n H(p_i)-Cn/2^k.$$ Finally, subadditivity of entropy shows that $\sum_{i=1}^n H(p_i)\geq \log |\cF|=\alpha n$ (see e.g. Corollary 15.7.3 of Alon-Spencer \cite{AlonSpencer}). Combining everything, $\E\big[|\bigcup_{i=1}^k F_i|\big]\geq \alpha n-Cn/2^k$.
\end{proof}

We now give the proof of Theorem~\ref{thm:entropy daykin erdos}. 

\begin{proof}[Proof of Theorem \ref{thm:entropy daykin erdos}]
Let $\eps=d(\cA,\cB)\geq 2^{-\theta n}$, let $C$ be the constant given by Lemma \ref{lemma:entropy inequality}, and let $c=\log_2 C$. Assume that $$|\cA||\cB|\geq \exp((c+8)\max\{\theta n, \sqrt{\theta n\log_2 n}\})2^{n+\theta\log_2\theta^{-1}n}.$$ To simplify notation, we write $M=\max\{\theta n, \sqrt{\theta n\log_2 n}\}$. Furthermore, we denote the size of $\cA$ by $|\cA|=2^{\alpha n}$, where $\alpha\in [0, 1]$.

The proof is based on the dependent random choice method. We refer the reader interested in learning more about this method to the survey of Fox and Sudakov \cite{FS09}. Let $t=\max\{\sqrt{\frac{\log_2 n}{\theta n}}, 1\}=\frac{M}{\theta n}$ and $k=\log_21/\theta+c$. Choose sets $B_1, \dots, B_t\in \cB$ uniformly at random and let $\cA_0=N(B_1, \dots, B_t)$ be the common neighbourhood of these sets in the disjointness graph. Then $\E[|\cA_0|]\geq \eps^t |\cA|$ and hence $\E[|\cA_0|^k]\geq \eps^{tk} |\cA|^k$ by convexity. 

Let $s=n-\alpha n+5M$, and say that an ordered $k$-tuple $(A_1, \dots, A_k)\in \cA_0^k$ is \emph{unfriendly} if $|N(A_1,\dots,A_k)|< 2^s$, otherwise say that it is friendly. Let $X$ be the number of unfriendly $k$-tuples. Then 
\begin{align*}
\mathbb{E}[|X|]&\leq |\cA|^k \left(\frac{2^{s}}{|\cB|}\right)^t= |\cA|^k \left(\frac{2^{n+5M}}{|\cA||\cB|}\right)^t\leq |\cA|^k2^{-((c+3)M+\theta\log_2\theta^{-1}n)t}\\
&\leq  |\cA|^k 2^{-(  c\theta n + 3\sqrt{\theta n\log_2 n}+n\theta \log_2 1/\theta)t}\leq |\cA|^k 2^{-n\theta kt} 2^{-3\sqrt{\theta n\log_2 n} \sqrt{\frac{\log_2 n }{ \theta n}}}\leq |\cA|^k \eps^{tk} n^{-3}.
\end{align*}

Since 
$\E\big[|\cA_0|^k - n X \big]\geq \eps^{tk} |\cA|^k-n\eps^{tk}|\cA|^k/n^3\geq \frac{1}{2}\eps^{tk}|\cA|^k$, we conclude that there is an outcome for which $|\cA_0|^k-nX\geq \frac{1}{2}\eps^{tk}|\cA|^k$. Let us fix such an $\cA_0$, then $|\cA_0|\geq \eps^t |\cA|/2$ and more than $(1-1/n)|\cA_0|^k$ of the $k$-tuples in $\cA_0^k$ are friendly.

Say that a $k$-tuple $(A_1, \dots, A_k)\in \cA_0^k$ is \emph{wide}, if $|\bigcup_{i=1}^n A_i|\geq n-s+1$.  Let $(A_1, \dots, A_k)$ be a randomly chosen $k$-tuple from $\cA_0^k$ and let $U=\bigcup_{i=1}^k A_i$ be their union. By Lemma \ref{lemma:entropy expected union}, the expected size of $U$ is at least 
$$\E[|U|]\geq \log_2|\cA_0|-\frac{Cn}{2^k}\geq \log_2|\cA|+t\log_2\eps -1 - \frac{Cn}{2^k}\geq \alpha n-\theta n t -1 - \frac{Cn}{2^k}.$$ By our choice of $k$ and $c$, we have $Cn/2^k=\theta n$ and thus 
$$\E\big[|U|\big]\geq \alpha n-(t+1)\theta n-1\geq \alpha n-2t \theta n-1\geq \alpha n-2M-1\geq n-s+2.$$ Hence, with probability at least $1/n$, we have $|U|\geq n-s+1$. Otherwise, a contradiction comes from the following chain of inequalities $$\E\big[|U|\big]\leq \Pb\big[|U|\geq n-s+1\big]\cdot n+\Pb\big[|U|<n-s+1\big](n-s+1)< \frac{1}{n}\cdot n+\big(1-\frac{1}{n}\big)(n-s+1)\leq n-s+2.$$ 
In other words, at least $\frac{1}{n}|\cA_0|^k$ of the $k$-tuples in $\cA_0^k$ are wide.

In conclusion, there exists a $k$-tuple $(A_1, \dots, A_k)\in \cA_0^k$ which is both friendly and wide. But then  $|N(A_1,\dots,A_k)|>2^{s}$, and writing $U=\bigcup_{i=1}^k A_i$, we have $|U|\geq n-s+1$ as well. This is impossible, as each element of $N(A_1,\dots,A_k)$ is a subset of $[n]\backslash U$. But $[n]\setminus U$ has at most $2^{n-|U|}\leq 2^{s-1}$ subsets, contradiction. 
\end{proof}

Finally, we show that this theorem indeed implies Theorem \ref{thm:fine-grained daykin erdos}.

\begin{proof}[Proof of Theorem \ref{thm:fine-grained daykin erdos}]
   Assume that $n$ is sufficiently large with respect to $d$, let $\theta=(2d-\eps)/n$ for some $\eps\in (0,1/2)$, and let $\cA,\cB\subset 2^{[n]}$ such that $d(\cA,\cB)\geq 2^{-\theta n}=(1+\Theta(\eps))\cdot2^{-2d}$. Then Theorem~\ref{thm:entropy daykin erdos} implies that 
   $$|\cA||\cB|\leq \exp(O(\sqrt{\theta n\log_2n}))2^{n(1+\theta \log \theta^{-1})}.$$
   Here, $\theta\log_2\theta^{-1}\leq \frac{2d-\eps}{n}\log_2 n$,
   so we can write
   $$\exp(O(\sqrt{\theta n\log_2n}))2^{n+\theta\log_2\theta^{-1}n}\leq \exp(O(\sqrt{d\log_2 n}))2^{n}n^{2d-\eps}.$$
   Choosing $\eps=C_d/\sqrt{\log_2 n}$ for some sufficiently large $C_d$ depending only on $d$ and $c_d$, the right hand side is at most $c_dn^{2d}2^n$, finishing the proof. 
\end{proof}

Let us conclude this section by sketching the how the non-bipartite version of Theorem~\ref{thm:fine-grained daykin erdos} can be derived from Theorem~\ref{thm:fine-grained daykin erdos}.  
Suppose that $\cF$ is a family of size $m\geq c_d n^d 2^{n/2}$ containing $\delta \binom{|\cF|}{2}$ disjoint pairs. Note that we may remove all sets of size smaller than $n/10$ from the family $\cF$, since there are $o(2^{n/2})$ such sets. This yields a smaller family $\cF'\subseteq \cF$, with size $(1-o(1))m$ and $(\delta-o(1))\binom{|\cF'|}{2}$ disjoint pairs. 

The main observation is that the disjointness graph $G$ of the family $\cF'$ has $o(m^3)$ triangles. If this is not the case, there is a set $A\in \cF'$ participating in $\Omega(m^2)$ triangles. In other words, the neighbourhood $N_G(A)$ contains $\Omega(m^2)$ edges of the disjointness graphs. Since $|A|\geq n/10$,  $N_G(A)$ is a set family on the ground set of size at most $|[n]\backslash A|\leq 9n/10$, it contains $|N_G(A)|=\Omega(m)$ sets and $\Omega(m^2)$ disjoint pairs. But this contradicts the theorem of Alon and Frankl \cite{AF85}, which shows that such a family has size at most $2^{(1/2+o(1))\cdot 9n/10}$, which is impossible. Hence, $G$ has $o(m^3)$ triangles.

Now, using Szemer\'edi's regularity lemma, we can find an $\eps$-regular partition of $G$ into $K=O_\eps(1)$ parts. Since $G$ has $o(m^3)$ triangles, this partition does not have three parts, such that all three pairs among them are both regular and dense. Applying Mantel's theorem, we conclude that almost all of the edges of $G$ are contained in $K^2/4$ dense regular pairs. Hence, by Pigeonhole principle one of the pairs has density at least $(2-o(1))\delta$. Applying Theorem~\ref{thm:fine-grained daykin erdos} to these two pairs implies that $(2-o(1))\delta\leq (1+o(1))2^{-2d}$ and so $\delta\leq (1+o(1))2^{-2d-1}$.

\subsection{Daykin - Erd\H os problem with nonzero intersections}

In this section, we prove Theorem \ref{thm:daykin erdos nonzero intersection}. The proof relies on a connection with the Approximate Duality Conjecture,  originally proposed by Ben-Sasson and Ron-Zewi \cite{BSRZ15}. Before stating the conjecture, let us introduce the notion of \textit{bias}. For a prime $p$, a $p$-th root of unity $\omega$ and two sets $A, B\subseteq \bF_p^n$, we define the bias of $A$ and $B$ as \[D_\omega(A, B)=\Big|\E_{a\sim A, b\sim B}\big[\omega^{\langle a, b\rangle}\big]\Big|=\frac{\Big|\sum_{a\in A, b\in B}\omega^{\langle a, b\rangle}\Big|}{|A||B|}.\]

\begin{conjecture}[Approximate Duality Conjecture]\label{conj:ADC}
Let $A, B\subseteq \mathbb{F}_p^n$ such that $D_\omega(A, B)\geq \eps$ for some $\omega\neq 1$. Then there exist  $A'\subseteq A, B'\subseteq B$ such that $\langle a,  b\rangle$ is the same for all $a\in A', b\in B'$ and $$|A'||B'|\geq 2^{ -O( \sqrt{ n  \log(1/\varepsilon)} ) } |A||B|.$$
\end{conjecture}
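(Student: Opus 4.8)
\textbf{Proof proposal for the Approximate Duality Conjecture (Conjecture~\ref{conj:ADC}).}

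The plan is to reduce the statement about bias to a problem about dense disjointness-type structure, where we can bring to bear the combinatorial machinery of the preceding sections—in particular the dependent random choice / bad-sets argument behind Theorem~\ref{thm:singer sudan sparse} and Lemma~\ref{lemma:covering lemma}. The first step is to pass from the averaged character sum $D_\omega(A,B)\geq\eps$ to a single popular inner-product value: by averaging over the $p$ possible values of $\langle a,b\rangle \in \bF_p$ and the triangle inequality, there is some $\lambda\in\bF_p$ such that the fraction of pairs $(a,b)\in A\times B$ with $\langle a,b\rangle=\lambda$ is at least $\tfrac1p+\tfrac{\eps}{p}$, hence at least $\tfrac{\eps}{p}$; a slightly more careful use of the Fourier expansion of the indicator of $\{\langle a,b\rangle=\lambda\}$ (the diagonal term contributes $1/p$ and the off-diagonal terms are controlled by the biases $D_{\omega^j}(A,B)$, at least one of which is $\geq\eps$) gives a popular value with density $\geq \eps/p$. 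For the log-rank application $p$ is a fixed prime (think $p=2$), so $\eps/p=\Theta(\eps)$ and we have reduced to: two sets $A,B\subseteq\bF_p^n$ with $\geq\eps|A||B|$ pairs having a fixed inner product $\lambda$, find near-maximal $A',B'$ with $\langle a,b\rangle$ constant on $A'\times B'$.

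The core step is to run a dependent-random-choice argument on the bipartite ``inner product $=\lambda$'' graph $G$ on $A\sqcup B$, mirroring the proof of Theorem~\ref{thm:singer sudan sparse}. Pick $k=\lfloor\sqrt{n/\log_p(1/\eps)}\rfloor$ random vertices $a_1,\dots,a_k\in A$ and let $B_0=N_G(a_1,\dots,a_k)$, the set of $b\in B$ with $\langle a_i,b\rangle=\lambda$ for all $i$; as in~(\ref{eq: common neighborhood}), a Jensen/second-moment argument yields a choice with $|B_0|\geq \eps^k|B|/2$ while simultaneously (by the bad-sets union bound over the $\leq p^{\,\dim W}$ affine subspaces $W=\{x:\langle a_i,x\rangle=\lambda\ \forall i\}$, using that a random $b\in B$ lands in a fixed bad $W$ with small probability) the affine subspace $W=W(a_1,\dots,a_k)$ has codimension $\mathrm{codim}(W)\leq O(n/k)$. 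Crucially $B_0\subseteq W$, so $B_0$ lives in an affine subspace of dimension $n-O(n/k)$; symmetrically run the argument again inside $W$ from the $B$-side to pull $A$ down into an affine subspace $V$ of dimension $n-O(n/k)$ while keeping $|A_0|\geq \eps^{O(k)}|A|$, all the while $\langle a,b\rangle=\lambda$ is forced on the surviving pairs by construction. After both rounds we have $A'\subseteq A$, $B'\subseteq B$ with $\langle a,b\rangle\equiv\lambda$ on $A'\times B'$ and $|A'||B'|\geq \eps^{O(k)}|A||B| = 2^{-O(k\log_p(1/\eps))}|A||B| = 2^{-O(\sqrt{n\log(1/\eps)})}|A||B|$, since $k\log(1/\eps)=O(\sqrt{n\log(1/\eps)})$.

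The point I expect to be the main obstacle is the second round: after the first dependent-random-choice step $B_0$ has been driven into a low-dimensional affine subspace $W$, but to iterate the argument symmetrically on the $A$-side we need the pair $(A_0,B_0)$ to still be \emph{dense} in $G$ after restricting attention to $B_0$, and we must track how the codimension of $V$ (the subspace trapping $A$) interacts with that of $W$ so the two reductions compose to a single $O(n/k)$ codimension rather than blowing up. Handling this cleanly will require first applying a cleaning step à la Lemma~\ref{lemma:cleaning} to ensure min-degree conditions survive the passage to $B_0$, and being careful that the ``bad subspace'' union bound is taken over affine (not linear) subspaces of the right codimension—the count $p^{\mathrm{codim}}$ must beat $\eps^{-O(k)}$, which is exactly what the choice $k\approx\sqrt{n/\log(1/\eps)}$ balances. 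A secondary technical wrinkle is that one wants the final common value to be a genuine constant $\lambda\in\bF_p$ (as the conjecture demands) and not merely ``constant along a coset''; this is automatic here because we never quotient—we only restrict $A,B$ to subsets—so $\langle a,b\rangle=\lambda$ holds on the nose.
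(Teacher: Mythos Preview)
The statement you are attempting to prove is Conjecture~\ref{conj:ADC}, which the paper explicitly presents as \emph{open}; the paper does not prove it but instead invokes the weaker Theorem~\ref{thm:weak ADC} (with bound $2^{-O_p(n/\log n)}$, relying on the Polynomial Freiman--Ruzsa theorem) to establish Theorem~\ref{thm:daykin erdos nonzero intersection}. So there is no ``paper's own proof'' to compare against, and a correct argument here would resolve a well-known open problem.

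Your argument has a genuine gap at the decisive step. After the first round you obtain $B_0=N_G(a_1,\dots,a_k)\subseteq W$, and after the second round $A_0=N_G(b_1,\dots,b_k)$ for some $b_1,\dots,b_k\in B_0$. But it is simply false that $\langle a,b\rangle=\lambda$ for every $a\in A_0$ and $b\in B_0$: all you know is $\langle a,b_j\rangle=\lambda$ for the $k$ specific $b_j$'s, not for an arbitrary $b\in B_0$. The reason the proof of Theorem~\ref{thm:singer sudan sparse} avoids this obstacle is that disjointness enjoys a monotone separation structure: once $U=\bigcup_i A_i$ is fixed, \emph{any} $R\subseteq U$ and \emph{any} $S\subseteq[n]\setminus U$ are automatically disjoint, so the bad-sets argument can enlarge $\{A_1,\dots,A_k\}$ to the full family $\cR=\{A\in\cA:A\subseteq U\}$ for free. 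The relation $\langle a,b\rangle=\lambda$ has no such structure. If one tries to force it by taking $A'=\{a\in A:\langle a,w\rangle=\lambda\text{ for all }w\in W\}$, then $A'$ lies in an affine subspace of dimension at most $\codim(W)\leq k$, hence $|A'|\leq p^{k}$, which is far too small when $|A|$ can be $p^{\Theta(n)}$. Equivalently, any pair $(A',B')$ with constant inner product must have affine spans whose linear parts are orthogonal, so their dimensions sum to at most $n$; your two dependent-random-choice rounds only produce containers of codimension $\leq k\ll n/2$ on each side, which is incompatible with this constraint.

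A secondary issue: the ``bad-subspace union bound over $\leq p^{\dim W}$ affine subspaces'' is not a meaningful count (the number of affine subspaces of codimension $d$ in $\bF_p^n$ is of order $p^{d(n-d)+d}$), and in any case $\codim(W)\leq k$ holds trivially since $W$ is cut out by $k$ equations, so no bad-sets argument is doing work there. But this is moot given the structural obstruction above.
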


While this conjecture remains open, the following partial progress suffices for our purposes. The following is proved by Ben-Sasson, Lovett and Ron-Zewi \cite{BSLRZ14} for $p=2$, while Bhowmick, Dvir and Lovett \cite{BDL13} extended it for every prime $p$.  

\begin{theorem}[\cite{BSLRZ14}, \cite{BDL13}]\label{thm:weak ADC}
Let $A, B\subseteq \mathbb{F}_p^n$ be such that $|D_\omega(A, B)|\geq \frac{2}{3 p^{3/2}}$ for some $\omega\neq 1$. Then there exist $A'\subseteq A, B'\subseteq B$ such that $\langle a, b\rangle$ is the same for all $a\in A', b\in B'$ and $$|A'||B'|\geq 2^{ -O_p(  n / \log n ) } |A||B|.$$
\end{theorem}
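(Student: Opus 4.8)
The plan is to follow the additive-combinatorics route of Ben-Sasson and Ron-Zewi. The first move is to reformulate the conclusion: finding $A'\subseteq A$ and $B'\subseteq B$ on which $\langle a,b\rangle$ is constant is, after translating $A'$ by any fixed $a_0\in A'$ and $B'$ by any fixed $b_0\in B'$, equivalent to finding large \emph{orthogonal} sets $A''\subseteq A-a_0$ and $B''\subseteq B-b_0$ (meaning $\langle a,b\rangle=0$ for all $a\in A''$, $b\in B''$). Thus it suffices to produce an affine subspace $x+V$ containing a $2^{-O_p(n/\log n)}$-fraction of $A$ together with an affine subspace $y+V^{\perp}$ containing a $2^{-O_p(n/\log n)}$-fraction of $B$; centering at any point of each piece then makes orthogonality automatic.

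The second, and main, step is to squeeze additive structure out of the bias hypothesis. Writing $\mu_B$ for the uniform measure on $B$, one has $D_\omega(A,B)=\big|\E_{a\sim A}\widehat{\mu_B}(-a)\big|$ up to normalization, and a Cauchy--Schwarz over $b$ gives $D_\omega(A,B)^2\le \E_{a,a'\sim A}|\widehat{\mu_B}(a-a')|$; hence a $\Omega_p(1)$-fraction of the pairs $(a,a')\in A\times A$ have $a-a'$ in the large spectrum $\mathrm{Spec}_{\gamma}(B)$ for some $\gamma=\Omega_p(1)$, and symmetrically $B-B$ meets $\mathrm{Spec}_{\gamma}(A)$ on an $\Omega_p(1)$-fraction of pairs. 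One then exploits that a large spectrum has large additive energy, so by the Balog--Szemer\'edi--Gowers theorem it contains a subset of bounded doubling, and that a positive-density subset of $A$ therefore lies in few cosets of a low-codimension subspace attached to this structure. Because $A$ and $B$ may be exponentially sparse---so that the naive Chang-type bound on the dimension spanned by the large spectrum is vacuous---this structure cannot be obtained in one shot: one iterates, each round trading a constant factor in density for progress on the codimension, over $O(n/\log n)$ rounds. This iteration is the source of both the $n/\log n$ in the exponent and the genuine difficulty.

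For the third step, apply the polynomial Freiman--Ruzsa theorem---on which the argument of Ben-Sasson, Lovett and Ron-Zewi was originally conditional, and which is now a theorem of Gowers, Green, Manners and Tao---to the bounded-doubling sets just produced: a set of doubling $K$ is covered by $K^{O(1)}$ translates of a subgroup no larger than it. Pigeonholing yields a single translate $x+V$ of a subspace $V$ carrying a $2^{-O_p(n/\log n)}$-fraction of $A$; running the dual argument on $B$ and matching the two subspaces---so that the subspace for $B$ sits inside $V^{\perp}$, using the orthogonality information carried along from the bias estimate relating $A-A$ to $\mathrm{Spec}_{\gamma}(B)$---gives the matching translate $y+V^{\perp}$ carrying a $2^{-O_p(n/\log n)}$-fraction of $B$. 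Setting $A'=A\cap(x+V)$ and $B'=B\cap(y+V^{\perp})$ and centering finishes the proof, with $|A'||B'|\ge 2^{-O_p(n/\log n)}|A||B|$. For general $p$ (the extension due to Bhowmick, Dvir and Lovett) one needs the $\mathbb{F}_p$-analogue of polynomial Freiman--Ruzsa and must track a $\mathrm{poly}(p)$ loss at each Fourier-analytic step; this is exactly why the hypothesis asks for bias at least $\tfrac{2}{3p^{3/2}}$ rather than merely a positive constant.

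The hardest part is the second step: turning large but merely \emph{constant} bias into usable additive structure for possibly sparse sets. A black-box appeal to Chang's theorem fails, forcing the delicate iteration (or a ``Bogolyubov-inside-a-subspace'' argument) whose bookkeeping caps the final bound at $2^{-O_p(n/\log n)}$ and is precisely the gap preventing one from reaching the full Approximate Duality Conjecture's $2^{-O(\sqrt{n\log(1/\eps)})}$.
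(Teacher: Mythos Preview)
This theorem is not proved in the paper at all: it is quoted as a black-box result from \cite{BSLRZ14} (the case $p=2$) and \cite{BDL13} (general $p$), made unconditional by the resolution of the Polynomial Freiman--Ruzsa conjecture in \cite{GGMT23}. The paper invokes it directly in the proof of Theorem~\ref{thm:daykin erdos nonzero intersection} without reproducing any argument, so there is no ``paper's own proof'' to compare your proposal against.

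That said, your outline is broadly faithful to the strategy of \cite{BSLRZ14}, but a couple of points are imprecise. First, your reformulation in step one is not quite right: if $A'\subseteq x+V$ and $B'\subseteq y+V^{\perp}$ then $\langle a,b\rangle=\langle x,y\rangle+\langle x,b-y\rangle+\langle a-x,y\rangle$, which is constant only after a further partition of $A'$ by the value of $\langle\,\cdot\,,y\rangle$ and of $B'$ by $\langle x,\,\cdot\,\rangle$ (a harmless loss of $p^2$, but it should be said). Second, and more substantively, ``matching the two subspaces'' in step three glosses over the real mechanism: one does not run independent arguments on $A$ and on $B$ and then hope the resulting subspaces are orthogonal complements. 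The actual iteration in \cite{BSLRZ14} restricts \emph{both} $A$ and $B$ simultaneously to a subspace and its orthogonal complement at each round, using that the structure extracted from $\mathrm{Spec}_\gamma(B)$ already lives in a subspace to which the relevant piece of $B$ is (nearly) orthogonal. Without this coupling the two subspaces have no reason to align, and your sketch as written does not explain how the alignment is achieved.
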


A nice way to think about the bias of two sets is using the Fourier transform. For any integer $n$, prime $p$ and a function $f:\mathbb{F}_p^n\rightarrow \mathbb{C}$, the \emph{discrete Fourier transform} of $f$ is the function $\hat{f}:\mathbb{F}_p^n\rightarrow \mathbb{C}$ defined as
$$\hat{f}(\xi)=\sum_{x\in \mathbb{F}_p^n} \exp\left(\frac{2i\pi}{p}\langle x,\xi\rangle\right) f(x).$$

Let us define $f:\bF_p\to \bR$ such that for $x\in \bF_p$ we have $f(x)=\Pb[\langle a, b\rangle = x]$, where $a, b$ are sampled uniformly at random from $A, B$. If $\omega=\exp(2i\pi k/p)$ is a $p$-th root of unity, the bias of $A, B$ is corresponds to the magnitude of the Fourier coefficient $\hat{f}(k)$, i.e. $D_\omega(A, B)=|\hat{f}(k)|$.

Finally, let us mention one more ingredient from the world of Fourier analysis which will be used. Given $f,g:\mathbb{F}_p^n\rightarrow \mathbb{C}$, we define the usual scalar product $\langle f,g\rangle=\sum_{x\in \mathbb{F}_p^n} f(x)\overline{g(x)}$, and set $\|f\|_2^2=\langle f,f\rangle$. We make use of Parseval's identity which states that 
\[\|\hat{f}\|_2^2=p^n\|f\|_2^2.\]

We now show how this can be used to prove Theorem~\ref{thm:daykin erdos nonzero intersection}

\begin{proof}[Proof of Theorem~\ref{thm:daykin erdos nonzero intersection}.]
Let $p$ be a prime in the interval $[2/c^2, 4/c^2]$, which exists by Bertrand's postulate. Consider the sets $A, B\subseteq \bF_p^n$ which correspond to characteristic vectors of sets from the families $\cA, \cB$. Then the assumption that at least $c|A||B|$ pairs of sets from $\cA, \cB$ have intersection of size $\lambda$ implies $\Pb[\langle a, b\rangle =\lambda \bmod p]\geq c$, where $a\sim A, b\sim B$ are sampled uniformly at random. Let us recall the function $f:\bF_p\to \bR$ defined as $f(x)=\Pb[\langle a, b\rangle = x]$ for every $x\in \bF_p$.

We would like to show that for some $\omega\neq 1$, we have $|D_\omega(A, B)|\geq \frac{2}{3p^{3/2}}$, which is equivalent to showing that the function $f$ has a large Fourier coefficient $\hat{f}(k)$ for some $k\neq 0$.

By Parseval's identity we have $\sum_{k\in \bF_p}|\hat{f}(k)|^2=p\sum_{x\in \mathbb{F}_p}f(x)^2\geq p c^2$. Further, since $\hat{f}(0)=\sum_{x\in \bZ/p\bZ} f(x)=1$ and $p\geq 2/c^2$, we conclude that 
\[\sum_{k\in \bF_p\backslash\{0\}}|\hat{f}(k)|^2 = p c^2-1\geq 1.\]
Hence, for some $p$-th root of unity $\omega=\exp(2i\pi k/p)\neq 1$, we have $D_\omega(A, B)=|\hat{f}(k)|\geq \frac{1}{\sqrt{p}}\geq \frac{2}{3p^{3/2}}$. Therefore, we can apply Theorem~\ref{thm:weak ADC} to conclude that there exist subsets $A'\subseteq A, B'\subseteq B$ such that $\langle a, b\rangle$ is the same for all $a\in A', b\in B'$ and such that $|A'||B'|\geq 2^{-O_c(n/\log n)} |A||B|$.

The sets $A',B'$ correspond to set families $\cA'\subseteq \cA, \cB'\subseteq \cB$ such that the size of the intersection of any two sets from $\cA', \cB'$ gives the same residue modulo $p$. Any such pair of families must satisfy $|\cA'||\cB'|\leq 2^n$, see e.g. Corollary 3.5 in the work of Sgall \cite{Sgall}. Hence, 
\[|\cA||\cB|\leq 2^{O_c(n/\log n)} |\cA'||\cB'|=2^{n(1+o(1))},\]
completing the proof.
\end{proof}

\subsection{Even and odd intersections}

In this section, we prove Theorem \ref{thm:even}, using the discrete Fourier transform on $\bF_2^n$. 

\begin{proof}[Proof of Theorem \ref{thm:even}]
    Let $f,g:\mathbb{F}_2^n\rightarrow \mathbb{R}$ be the characteristic functions of $\mathcal{A}$ and $\mathcal{B}$, respectively. More precisely, each $A\in 2^{[n]}$ naturally corresponds to a vector $v_A\in \mathbb{F}_2^n$, and we write $f(v_A)=1$ if $A\in \cA$, and $f(v_A)=0$ otherwise. The crucial observation is that 
    \begin{align*}        
    \langle \hat{f},g\rangle&=\sum_{x\in\mathbb{F}_2^n} g(x)\sum_{y\in \mathbb{F}_2^n} (-1)^{\langle x,y\rangle}f(y)=\sum_{x\in\mathbb{F}_2^n} \sum_{y\in \mathbb{F}_2^n} g(x)f(y)(-1)^{\langle x,y\rangle}\\
    =&|\{(A,B)\in \mathcal{A}\times \mathcal{B}: |A\cap B|\mbox{ is even}\}|-|\{(A,B)\in \mathcal{A}\times \mathcal{B}: |A\cap B|\mbox{ is odd}\}|
    \geq 2\delta |\cA||\cB|.
    \end{align*}
    On the other hand, by the Cauchy-Schwarz inequality,
    $$  \langle \hat{f},g\rangle\leq \|\hat{f}\|_2\cdot \|g\|_2=2^{n/2}\|f\|_2\cdot \|g\|_2=2^{n/2} |\cA|^{1/2} |\cB|^{1/2}.$$
    Comparing the lower and upper bounds on $\langle \hat{f},g\rangle$, we get the desired bound $|\cA||\cB|\leq 2^{n}/(4\delta^2)$.
\end{proof}

\section{Low rank matrices}\label{sect:low-rank matrices}

In this section, we prove our result concerning low rank matrices, namely Theorem \ref{thm:sparse_matrix}. Our proofs are based on certain notations of matrix discrepancy. To this end, we start by introducing some standard linear algebra notation. 

Given a vector $v$, we write $\|v\|=\|v\|_2$ for the Euclidean norm of $v$. For a matrix $M\in\mathbb{R}^{m\times n}$, $\|M\|_F$ denotes the Frobenius norm of $M$, that is,
$$\|M\|_F^2=\sum_{i,j} M(i,j)^2=\langle M,M\rangle=\sum_{i=1}^n\sigma_i^2,$$
 where $\langle \cdot,\cdot \rangle$ denotes the usual entry-wise dot product on the space of matrices, and  $\sigma_1\geq \dots\geq \sigma_n$ are the singular values of $M$.
 Furthermore, let 
 $$p(M)=\frac{1}{mn}\sum_{i=1}^{n}\sum_{j=1}^m M(i,j)=\frac{1}{mn}\langle M,J\rangle$$
 denote the average of the entries of $M$, where $J=J_{m,n}$ is the $m\times n$ all-ones matrix. 
 Also, we define the \emph{variance} of an $m\times n$ matrix $M$ as 
$q(M)=\frac{1}{mn}\|M-p(M)J\|_F^2 = \frac{1}{mn}\sum_{i, j}(M(i, j)-p(M))^2.$
 
 The \emph{cut-norm} of $M$ is defined as
 $$\|M\|_C=\max_{A\subset [m], B\subset [n]} |\langle M[A\times B],J\rangle|,$$
 and we define the \emph{discrepancy}  of $M$ as 
 $$\disc(M)=\max_{A\subset [m], B\subset [n]} |A||B|\cdot |p(M[A\times B]) - p(M)|=\|M-p(M)J\|_C.$$
 In other words, $\disc(M)$ measures the maximum deviation of the sum of entries of a submatrix from its expected value. Here, we collect some basic properties of the discrepancy.
\begin{claim}\label{claim:half}
   Let $M$ be an $m\times n$ matrix. Then,
   \begin{itemize}
       \item there exists and $m/2\times n/2$ submatrix $M'$ such that $p(M')\leq p(M)-\frac{\disc(M)}{3mn},$ and
       \item for every $m/2\times n/2$ submatrix $M'$, we have $|p(M')-p(M)|\leq\frac{4\disc(M)}{mn}$.
   \end{itemize}
\end{claim}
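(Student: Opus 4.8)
The plan is to relate both statements directly to the definition $\disc(M) = \|M - p(M)J\|_C$. Write $N = M - p(M)J$, so that $N$ has average entry $0$, and for any submatrix $N[A\times B] = M[A\times B] - p(M)J[A\times B]$ we have $\langle N[A\times B], J\rangle = |A||B|(p(M[A\times B]) - p(M))$, whose absolute value is at most $\disc(M)$ by definition. This reduces everything to understanding how the quantity $\langle N[A\times B], J\rangle$ behaves as $A, B$ range over subsets, and in particular over subsets of size exactly $m/2$ and $n/2$.

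For the second bullet, let $M'$ be any $m/2 \times n/2$ submatrix, say on rows $A$ and columns $B$ with $|A| = m/2$, $|B| = n/2$. Then $|p(M') - p(M)| = \frac{|\langle N[A\times B], J\rangle|}{|A||B|} = \frac{4|\langle N[A\times B], J\rangle|}{mn} \leq \frac{4\disc(M)}{mn}$, which is exactly the claimed inequality — this bullet is essentially immediate once the definitions are unwound.

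For the first bullet, I want to find $A, B$ with $|A| = m/2$, $|B| = n/2$ and $\langle N[A\times B], J\rangle \leq -\disc(M)/3$, which then gives $p(M') = p(M) + \frac{\langle N[A\times B], J\rangle}{|A||B|} \leq p(M) - \frac{\disc(M)}{3|A||B|} = p(M) - \frac{4\disc(M)}{3mn} \leq p(M) - \frac{\disc(M)}{3mn}$. Start from a pair $A_0 \subseteq [m], B_0 \subseteq [n]$ achieving $|\langle N[A_0\times B_0], J\rangle| = \disc(M)$; since $\langle N, J\rangle = 0$, by replacing $A_0$ with its complement if necessary (which negates the row-sums, hence flips sign — one has to be slightly careful, but swapping $A_0 \leftrightarrow [m]\setminus A_0$ changes $\langle N[A_0\times B_0],J\rangle$ to $-\langle N[A_0\times B_0],J\rangle$ because the column sums over all rows vanish) we may assume $\langle N[A_0\times B_0], J\rangle = -\disc(M)$. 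The issue is that $A_0, B_0$ need not have size $m/2, n/2$. To fix the sizes I would use an averaging/random-completion argument: extend or trim $A_0$ to a set $A$ of size $m/2$ by adding or removing rows chosen to not increase the sum too much (rows contributing the most negative marginal sums when adding, or the most positive when removing), and similarly for columns; alternatively, average $\langle N[A\times B], J\rangle$ over all size-$m/2$ sets $A$ containing $A_0$ or contained in $A_0$, and note the average is a fixed fraction of $\langle N[A_0\times B_0], J\rangle$ plus a controllable error, so some choice does at least as well as the average. The constant $1/3$ is the slack that makes this robust: a crude bound of the form "half the rows and half the columns of $A_0, B_0$ survive, and the cross terms cost at most another third" comfortably yields $\disc(M)/3$.

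The main obstacle is the size-normalization in the first bullet: $A_0, B_0$ can be arbitrary, so one must argue that a half-sized submatrix still captures a constant fraction of the extremal cut value. I expect the cleanest route is a two-step greedy/averaging argument — first fix the rows to size $m/2$ keeping the (signed) sum at most, say, $-\disc(M)/2$, then fix the columns to size $n/2$ losing at most another factor, using at each step that $N$ has zero total sum so that we can always choose the "better half". The arithmetic is routine; the only care needed is tracking signs and making sure the complementation step is valid, which it is precisely because $N = M - p(M)J$ has all row-sums and column-sums summing to zero overall.
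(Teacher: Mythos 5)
Your argument for the second bullet is correct and essentially the only possible one: since $|A||B|=mn/4$ for a half-sized pair, the bound $|p(M')-p(M)|=\frac{|\langle N[A\times B],J\rangle|}{|A||B|}\leq\frac{4\disc(M)}{mn}$ is immediate from the definition of the cut norm of $N=M-p(M)J$. No issues there, and the paper itself calls this part trivial.

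For the first bullet, however, there are two genuine problems. The first is the claimed sign flip under complementation. You write that replacing $A_0$ by $[m]\setminus A_0$ sends $\langle N[A_0\times B_0],J\rangle$ to $-\langle N[A_0\times B_0],J\rangle$ ``because the column sums over all rows vanish.'' That is false: the matrix $N=M-p(M)J$ has total sum zero, i.e.\ $\mathbf{1}^\top N\mathbf{1}=0$, but the individual column sums $\sum_{i\in[m]}N(i,j)$ need not vanish (they vanish for a given column $j$ only when that column of $M$ has average exactly $p(M)$). Concretely, $\langle N[A_0\times B_0],J\rangle+\langle N[([m]\setminus A_0)\times B_0],J\rangle=\langle N[[m]\times B_0],J\rangle$, which is some number bounded by $\disc(M)$ in absolute value but in general nonzero. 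What the vanishing total sum actually buys you is the four-quadrant identity $\sum_{\sigma,\tau\in\{0,1\}}\langle N[A_0^{(\sigma)}\times B_0^{(\tau)}],J\rangle=0$ (where the superscript $0$ means the set and $1$ its complement). From this, if $\langle N[A_0\times B_0],J\rangle=+\disc(M)$, then one of the other three quadrants has value at most $-\disc(M)/3$; this is almost surely where the constant $1/3$ in the statement comes from, but it is a different argument from the sign flip you describe.

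The second problem is that the trim/pad step is not merely ``routine arithmetic.'' Once you have a pair $(A_1,B_1)$ with $\langle N[A_1\times B_1],J\rangle\leq-\disc(M)/3$, trimming rows and columns to half size loses at most a factor $1/4$ (take the $m/2$ rows with smallest row sums, then the $n/2$ columns with smallest column sums), but this only works if $|A_1|\geq m/2$ and $|B_1|\geq n/2$. If $|A_1|<m/2$, you must pad with rows from $A_1^c$, and the total of their row sums over $B_1$ is $\langle N[A_1^c\times B_1],J\rangle$, which can be as large as $+\disc(M)$; even taking the smallest half of them costs up to $+\disc(M)/2$, which can swamp the $-\disc(M)/3$ you started with and leave you with a positive value. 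Nothing in your sketch rules this out, and the hand-waved ``half the rows and half the columns of $A_0,B_0$ survive, and the cross terms cost at most another third'' does not account for this. A workable route (likely in the spirit of what the cited Claim 2.2 of \cite{ST24} does) is to first produce a half-sized pair with $|\langle N[A\times B],J\rangle|\geq\disc(M)/4$ directly via a biased random choice that keeps $|A|=m/2$, $|B|=n/2$ by construction, and only then apply the four-quadrant identity to the half-sized anchor (where now all four quadrants are automatically half-sized), obtaining one with value at most $-\disc(M)/12$, which is exactly what is needed.
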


Since the proof of the first half of this claim is almost identical to that of Claim 2.2 in \cite{ST24}, and the second part is trivial, we omit its proof for the sake of compactness.

\medskip
Here is the main idea in the proof of Theorem~\ref{thm:sparse_matrix} - we use tools from linear algebra to iteratively find sequence of submatrices of $M=M_0\supset M_1\supset \dots$ with smaller and smaller average entry. Once the average of entries becomes $O(1/r)$, a simple greedy argument can be used to construct a large all-zero matrix. To construct this sequence of submatrices, we show a lower bound on the discrepancy of the matrix $M$.

In \cite{ST24}, the last two authors proved that if $M$ is a binary matrix of small rank, then its discrepancy is large. Here, we extend this result to all matrices. First, we prove the following lower bound on the cut-norm, which is the key technical result underpinning all upcoming proofs. The proof of this lemma is based on an elegant approach of Matthew Kwan and Lisa Sauermann, to whom we are indebted for letting us use their idea.

\begin{lemma}\label{lemma:cut}
    There exists $c>0$ such that the following holds. Let $M$ be an $m\times n$ matrix of rank $r$. Then there exists an $m/2\times n/2$ submatrix $M'$ of $M$ such that 
    $$\|M\|_C\geq \frac{c\sqrt{mn}}{\sqrt{r}}\cdot \frac{\|M'\|_F^2}{\|M\|_F}.$$
\end{lemma}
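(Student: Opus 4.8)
The cut-norm $\|M\|_C$ is, up to a factor of $4$, the same as $\max_{u\in\{0,1\}^m,v\in\{0,1\}^n}|u^\top M v|$, and relaxing the $0/1$ vectors to $\pm 1$ or to unit vectors changes it only by absolute constants (Grothendieck-type inequalities, or just direct rounding). So it suffices to produce a single rank-$1$ test pair that certifies a large bilinear form. The natural idea, following the Kwan--Sauermann approach hinted at, is \emph{random projection / random rounding}: pick a uniformly random $\pm 1$ (or Gaussian) column-sign vector $v$, look at $Mv\in\bR^m$, and then choose the row-sign vector $u$ greedily to match the signs of the coordinates of $Mv$, so that $u^\top M v=\|Mv\|_1$. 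The expected value $\E\|Mv\|_1$ is what we must lower bound, and this is where the rank enters.

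The key inequality is a comparison between the $\ell_1$ and $\ell_2$ behaviour of $Mv$. For any fixed $v$ one has $\|Mv\|_1\ge \|Mv\|_2^2/\|Mv\|_\infty$, but $\|Mv\|_\infty$ can be as large as $\sqrt{m}\|Mv\|_2$, which would lose too much. Instead I would use the rank bound through the singular values: writing $M=\sum_{i\le r}\sigma_i x_i y_i^\top$, the vector $Mv$ lies in the $r$-dimensional column space of $M$, and for vectors in an $r$-dimensional subspace spanned by a well-conditioned basis one expects $\|w\|_1\gtrsim \sqrt{m/r}\,\|w\|_2$ on a typical $w$ — more precisely, $\E_v\|Mv\|_1 \gtrsim \sqrt{m/r}\cdot\E_v\|Mv\|_2$ can be obtained by a Cauchy--Schwarz / Khintchine argument controlling the fourth moment $\E_v\|Mv\|_4^4$ in terms of $\|M\|_F$ and $r$. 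Concretely: $\E_v\|Mv\|_2^2=\|M\|_F^2$, and one bounds $\E_v\|Mv\|_4^4$ by $O(r)\cdot(\text{something})$ using that $Mv$ is supported on an $r$-dimensional space (expand $\|Mv\|_4^4=\sum_j (M_j\cdot v)^4$, use hypercontractivity/Khintchine to get $\E(M_j\cdot v)^4\le 3\|M_j\|^4$, then $\sum_j\|M_j\|^4\le \|M\|_F^2\cdot \max_j\|M_j\|^2$ is \emph{not} good enough, so instead relate $\sum_j\|M_j\|^4$ to the Schatten-$4$ norm $\sum\sigma_i^4\le(\sum\sigma_i^2)^2/r\cdot r=\dots$ — the point being $\sum_i\sigma_i^4\le \frac{1}{?}$; the honest route is $\E_v\|Mv\|_4^4 \le 3\sum_i\sigma_i^4 + 3(\sum_i\sigma_i^2)^2$ and then $\sum_i\sigma_i^4\le \sigma_1^2\sum_i\sigma_i^2\le \|M\|_F^4$, combined with $\|M\|_F^2 = \sum\sigma_i^2 \ge r^{-1}(\dots)$ is the wrong direction). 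The clean statement I would actually aim for is the Paley--Zygmund-type bound: $\E_v\|Mv\|_1 \ge \frac{(\E_v\|Mv\|_2^2)^{3/2}}{(\E_v\|Mv\|_4^4)^{1/2}} \cdot c$, and then bound $\E_v\|Mv\|_4^4 \le C r \|M\|_F^4 / m$ — no: the right bound is $\E_v\|Mv\|_4^4\le 3\|M\|_F^4$ trivially when combined with $\mathrm{rank}=r$ giving an \emph{extra} $\|Mv\|_\infty^2\le \|Mv\|_2^2$ but on an $r$-dim space a random $w$ has $\|w\|_\infty\lesssim\sqrt{(\log r)/m}$...

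Let me state the plan at the level it should be stated. \textbf{Step 1:} reduce $\|M\|_C$ to $\gtrsim \E_{v}\|Mv\|_1$ where $v$ is a random sign vector, via greedy rounding of rows and the standard $0/1$-vs-$\pm1$ conversion. \textbf{Step 2:} prove $\E_v\|Mv\|_1\gtrsim \sqrt{m/r}\,\|M\|_F$ by a second-moment argument: lower bound via $\|Mv\|_1\ge\|Mv\|_2^2/\|Mv\|_\infty$ is too lossy, so instead use that $Mv$ lies in the $r$-dimensional column space $V$ of $M$ and for \emph{any} $w\in V$ one has $\|w\|_1\ge \|w\|_2\cdot\sqrt{m}/\sqrt{r}\cdot c$ failing pointwise but holding in expectation after noting $\E\|Mv\|_2^2=\|M\|_F^2$ and controlling $\E\|Mv\|_4^4$ by Khintchine (each coordinate $\E(M_j v)^4\le 3\|M_j\|_2^4$) together with the rank constraint $\sum_j \|M_j\|_2^4 \le (\max_j\|M_j\|_2^2)\|M\|_F^2$ and the fact that restricting to a random $m/2\times n/2$ submatrix $M'$ we may assume all rows have comparable norm — actually this is exactly why the statement features $M'$: \textbf{Step 3:} the appearance of $\|M'\|_F^2$ rather than $\|M\|_F^2$ lets us first pass to a submatrix on which the row norms are roughly balanced (dyadic pigeonholing on row norms, keeping an $m/2\times n/2$ block where $\|M'\|_F^2$ is still a constant fraction of the contribution of that norm class), run the estimate there, and then observe $\|M\|_C\ge\|M'\|_C$ so the bound for $M'$ transfers to $M$ with $\|M'\|_F$ in the numerator and $\|M\|_F$ (the larger quantity) harmlessly in the denominator. \textbf{Main obstacle:} making the fourth-moment / anticoncentration estimate give exactly the $\sqrt{r}$ dependence without extra $\log r$ factors — this is precisely the delicate point the Kwan--Sauermann idea is meant to finesse, presumably by choosing the test vector cleverly (e.g. Gaussian $v$ so that $Mv$ is Gaussian on $V$, where $\E\|Mv\|_1/\E\|Mv\|_2$ is controlled exactly by $\dim V=r$ via $\|w\|_1\ge \|w\|_2^2/\|w\|_\infty$ combined with Gaussian max bounds, or by a direct SVD computation $\E_g\|M g\|_1$ in terms of $\sigma_i$). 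I would spend the bulk of the write-up on Step 2, treat Steps 1 and 3 as short pigeonholing/rounding lemmas, and flag the balanced-row-norm reduction as the reason the lemma is phrased with $M'$.
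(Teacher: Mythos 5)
Your plan takes a genuinely different route from the paper, but it has a gap that cannot be patched within the framework you set up.

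The paper's proof works in the $\gamma_2^*$-norm (the semidefinite relaxation of the cut-norm, linked to it by Grothendieck's inequality). It writes $M = U\Sigma V^T$ via the SVD, forms the vectors $u_i(k)=\sqrt{\sigma_k}U(i,k)$ and $v_j(k)=\sqrt{\sigma_k}V(j,k)$ so that $\langle u_i,v_j\rangle=M(i,j)$, uses Markov's inequality to pick an $m/2\times n/2$ block $I\times J$ on which $\|u_i\|^2\le 2\sigma/m$ and $\|v_j\|^2\le 2\sigma/n$ (where $\sigma=\sum_k\sigma_k$ is the nuclear norm), and then plugs the rescaled \emph{unit} vectors into the definition of $\gamma_2^*$ to get $\gamma_2^*(M)\ge \frac{\sqrt{mn}}{2\sigma}\|M[I\times J]\|_F^2$; the factor $\sqrt{r}$ then comes out of $\sigma\le\sqrt{r}\,\|M\|_F$ by Cauchy--Schwarz. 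In particular the role of $M'$ is not to balance row norms of $M$: it is to balance the norms $\|u_i\|,\|v_j\|$, which are governed by $\sum_k\sigma_k U(i,k)^2$, not by $\sum_j M(i,j)^2$.

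Your approach, by contrast, rounds $\|M\|_C$ down to $\E_v\|Mv\|_1$ for a random sign vector $v$ and then aims for $\E_v\|Mv\|_1\gtrsim \sqrt{m/r}\,\|M\|_F$ (plus a pigeonholing step for $M'$). This cannot give the lemma. The cut-norm equals, up to constants, $\max_v\|Mv\|_1$, and for low-rank matrices this max is much larger than the expectation. The all-ones matrix $M=J$ already exhibits the loss: there $r=1$, $\|M\|_F=\sqrt{mn}$, any half-sized $M'$ has $\|M'\|_F^2\ge mn/4$, so the lemma demands $\|M\|_C\gtrsim mn$, and indeed $\|M\|_C=mn$; but $\E_v\|Jv\|_1=m\,\E|\sum_j v_j|=\Theta(m\sqrt n)$, short by a factor of $\sqrt n$. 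No amount of row-norm balancing helps, since all rows of $J$ already have identical norm. The $\gamma_2^*$ relaxation is what rescues this: it allows \emph{unit-vector} certificates $x_i,y_j\in\mathbb{R}^d$ that can be coordinated with the SVD, rather than scalar signs, and that extra freedom is exactly worth the missing $\sqrt n$. This is also why the fourth-moment/Khintchine estimates in your Step~2 refuse to close — you flag the obstacle yourself — and why switching to Gaussian $v$ does not repair it either: Gaussian width of a centrally symmetric body is still an average, not a max. If you want to salvage a moment-method route, the object you must lower-bound is $\gamma_2^*(M)$ directly (or equivalently choose the $d$-dimensional unit vectors $x_i,y_j$ from the SVD as the paper does), not $\E_v\|Mv\|_1$.
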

\begin{proof}
Consider the $\gamma_2^*$-norm of $M$, defined as  
$$\gamma_2^{*}(M)=\max \Big|\sum_{i=1}^{m}\sum_{j=1}^n M(i,j)\langle x_i,y_j\rangle\Big|,$$
where the maximum is taken over all vectors $x_1, \dots, x_m, y_1, \dots, y_n\in \mathbb{R}^{d}$ and all dimensions $d$ such that $\|x_i\|, \|y_j\|\leq 1$. The $\gamma^*_2$-norm is a semidefinite relaxation of the cut-norm, and it is well known that Grothendieck's inequality \cite{Groth} implies   $\|M\|_C=\Omega( \gamma_2^*(M))$. See e.g. \cite{LS} for more details.

The first idea of the proof is to use the Singular Value Decomposition theorem, which states that every $m\times n$ real matrix $M$ of rank $r$ can be represented as $M=U\Sigma V^T$, where $U$ is an $m\times r$ real matrix whose columns are orthogonal of unit norm, $\Sigma$ is an $r\times r$ real diagonal matrix, whose diagonal entries are the singular values of $M$, and $V$ is an $n\times r$ real matrix whose rows  are orthogonal of unit norm. We denote the singular values of $M$, i.e. the diagonal entries of $\Sigma$, by $\sigma_k=\Sigma(k, k)$, and let their sum be $\sigma=\sum_{k=1}^r \sigma_k$. We remark that $\sigma$ is also known as the \emph{nuclear norm} or \emph{trace norm} of $M$.

Define the vectors $u_i, v_j\in \mathbb{R}^r$ for $i=1,\dots,m$ and $j=1,\dots,n$ by setting $u_i(k)=\sqrt{\sigma_k}U(i,k)$ and $v_j(k)=\sqrt{\sigma_k}V(j,k)$. Observe that $\langle u_i,v_j\rangle=\sum_{k=1}^r U(i, k)\Sigma(k, k)V(j, k) = M(i, j)$.

Moreover, we have $\sum_{i=1}^m\|u_i\|^2=\sum_{k=1}^r \sigma_k \sum_{i=1}^m U(i,k)^2=\sum_{k=1}^r \sigma_k=\sigma$, where we used that the columns of $U$ have unit norm. This implies that the number of $i\in [m]$ for which $\|u_i\|^2\geq \frac{2\sigma}{m}$ is at most $m/2$, so there exists a subset $I\subset [m]$ of size $m/2$ such that $\|u_i\|^2\leq \frac{2\sigma}{m}$ for every $i\in I$. Similarly, there exists $J\subset [n]$ of size at  $n/2$ such that $\|v_j\|^2\leq \frac{2\sigma}{n}$ for every $j\in J$.

Let $M'=M[I\times J]$ and define the vectors $x_i=\sqrt{\frac{ m }{2\sigma}} u_i$ for $i\in I$, $x_i=0$ for $i\in [m]\setminus I$, and similarly, $y_j=\sqrt{\frac{ n }{2\sigma}} v_j$ for $j\in J$, and $y_j=0$ for $j\in [n]\setminus J$. Observe that $\|x_i\|^2,\|y_j\|^2\leq 1$. Therefore,
 \begin{align*}
     \gamma_2^*(M)&\geq \sum_{i=1}^m\sum_{j=1}^n M(i,j)\langle x_i,y_j\rangle =\sum_{i\in I}\sum_{j\in J} M(i,j)\frac{\sqrt{mn}}{2\sigma}\langle u_i,v_j\rangle\\
     &=\frac{\sqrt{mn}}{2\sigma}\sum_{i\in I}\sum_{j\in J}M(i,j)^2=\frac{\sqrt{mn}}{2\sigma}\|M'\|_F^2.
 \end{align*}

To complete the proof, we need to upper bound $\sigma$, which we do using the Cauchy-Schwartz inequality. Recall that $\sum_{k=1}^r \sigma_k^2=\|M\|_F^2$, and therefore 
\[\sigma=\sum_{k=1}^r\sigma_k\leq r\bigg(\frac{\sum_{k=1}^r\sigma_k^2}{r}\bigg)^{1/2}=\sqrt{r}\|M\|_F.\]
In conclusion,
$\|M\|_C=\Omega(\gamma_2^*(M))\geq \Omega\Big(\frac{ \sqrt{mn}}{2\sqrt{r}}\cdot \frac{\|M'\|_F^2}{\|M\|_F}\Big).$
\end{proof}

Our next goal is to convert Lemma~\ref{lemma:cut} into a statement about discrepancy of low-rank matrices. However, before we do that, we need the following simple claim.

\begin{claim}\label{claim:variance}
For every real $t$, $\|M-tJ\|_F^2\geq q(M)mn$.
\end{claim}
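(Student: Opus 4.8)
The claim simply records the classical fact that among all constant shifts, subtracting the mean minimizes the Frobenius distance, so the plan is a one-line completing-the-square computation. Writing $p=p(M)$ for the average of the entries of $M$, I would expand
\[
\|M-tJ\|_F^2=\sum_{i,j}\big(M(i,j)-t\big)^2=\sum_{i,j}\big(M(i,j)-p+(p-t)\big)^2.
\]
Expanding the square term by term gives three contributions: $\sum_{i,j}(M(i,j)-p)^2=\|M-pJ\|_F^2$, the cross term $2(p-t)\sum_{i,j}(M(i,j)-p)$, and $\sum_{i,j}(p-t)^2=mn(p-t)^2$.

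The key observation is that the cross term vanishes: $\sum_{i,j}(M(i,j)-p)=\langle M,J\rangle-p\cdot mn=0$ by the definition $p=p(M)=\frac{1}{mn}\langle M,J\rangle$. Hence
\[
\|M-tJ\|_F^2=\|M-pJ\|_F^2+mn(p-t)^2\geq \|M-pJ\|_F^2=q(M)\,mn,
\]
where the last equality is the definition of $q(M)$. This holds for every real $t$, with equality precisely when $t=p(M)$, which proves the claim.

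There is no real obstacle here; the only thing to be careful about is correctly invoking the definitions of $p(M)$ and $q(M)$ from the paragraph preceding the claim so that the cross term is manifestly zero and the final inequality is immediate.
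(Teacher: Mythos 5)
Your proof is correct and takes essentially the same approach as the paper: both arguments observe that $t\mapsto\|M-tJ\|_F^2$ is a quadratic in $t$ minimized at $t=p(M)$, the paper by direct expansion and locating the vertex, you by completing the square around $p(M)$ so the cross term visibly vanishes. The two write-ups differ only cosmetically.
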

\begin{proof}
By the definition of the Frobenius norm, we have \[\|M-tJ\|_F^2=\sum_{i=1}^m\sum_{j=1}^n (M(i, j)-t)^2=\sum_{i=1}^m\sum_{j=1}^n M(i, j)^2-2 \sum_{i=1}^m\sum_{j=1}^n M(i, j) t+t^2 mn.\] Thus, the quadratic function $t\mapsto \|M-tJ\|_F^2$ is minimized if $t=2 \sum_{i, j} M(i, j)/2mn=p(M)$, in which case we have $\|M-tJ\|_F^2=\|M-p(M)J\|_F^2= q(M)mn$
\end{proof}

\begin{lemma}\label{lemma:disc}
There exists $c>0$ such that for every $m\times n$ matrix $M$ of rank $r$, there is an $m/2\times n/2$ submatrix $M'\subseteq M$ with 
$$\disc(M)\geq cmn\cdot \frac{q(M')}{\sqrt{rq(M)}}.$$
\end{lemma}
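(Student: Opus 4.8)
The goal is to pass from the cut-norm bound of Lemma~\ref{lemma:cut} to a statement about discrepancy, i.e. about the cut-norm of the centered matrix $M-p(M)J$. The natural idea is to apply Lemma~\ref{lemma:cut} not to $M$ itself but to the centered matrix $\widetilde M = M - p(M)J$, whose cut-norm is exactly $\disc(M)$. However, centering increases the rank by at most $1$, so $\rank(\widetilde M)\leq r+1\leq 2r$, which only costs a constant factor inside the square root. First I would therefore set $\widetilde M = M - p(M)J$ and apply Lemma~\ref{lemma:cut} to it: this produces an $m/2\times n/2$ submatrix, call it $\widetilde M[I\times J]$, with
\[
\disc(M)=\|\widetilde M\|_C\geq \frac{c\sqrt{mn}}{\sqrt{2r}}\cdot\frac{\|\widetilde M[I\times J]\|_F^2}{\|\widetilde M\|_F}.
\]

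The next step is to clean up the two Frobenius norms. For the denominator, $\|\widetilde M\|_F^2 = \|M-p(M)J\|_F^2 = q(M)mn$ by definition of the variance, so $\|\widetilde M\|_F=\sqrt{q(M)mn}$. For the numerator, let $M' = M[I\times J]$ be the corresponding submatrix of the \emph{original} matrix $M$. Then $\widetilde M[I\times J] = M' - p(M)J_{m/2,n/2}$, and by Claim~\ref{claim:variance} applied to $M'$ with $t=p(M)$ we get $\|\widetilde M[I\times J]\|_F^2 = \|M'-p(M)J\|_F^2\geq q(M')\cdot\frac{mn}{4}$. Substituting both estimates into the displayed inequality yields
\[
\disc(M)\geq \frac{c\sqrt{mn}}{\sqrt{2r}}\cdot\frac{q(M')mn/4}{\sqrt{q(M)mn}} = \frac{c}{4\sqrt 2}\cdot mn\cdot\frac{q(M')}{\sqrt{r\,q(M)}},
\]
which is exactly the claimed bound after absorbing the constant. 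I would choose $M'$ in the statement to be this submatrix $M[I\times J]$.

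There is essentially only one place that requires a little care, and it is the main (minor) obstacle: Lemma~\ref{lemma:cut} as stated applies to a rank-$r$ matrix and produces an $m/2\times n/2$ submatrix, but the rank bound after centering is $r+1$ rather than $r$; one must check this is harmless. If $r\geq 1$ then $r+1\leq 2r$ and the factor $\sqrt 2$ is absorbed into $c$; the degenerate case $r=0$ (i.e. $M$ a constant matrix) makes $q(M)=0$ and the statement is vacuous or trivially interpreted, so we may assume $r\geq 1$. A second point worth a sentence is that Lemma~\ref{lemma:cut} gives a submatrix $M'$ of the centered matrix, but restricting a centered matrix to a submatrix is the same as restricting first and then subtracting the \emph{global} mean $p(M)$ (not the submatrix's own mean), which is precisely the form Claim~\ref{claim:variance} handles since it bounds $\|M'-tJ\|_F^2$ from below for \emph{every} real $t$. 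With these two observations in place the proof is a direct substitution, so I expect it to be short.
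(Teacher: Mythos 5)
Your proposal is correct and matches the paper's own proof essentially line by line: center to $M_0=M-p(M)J$, apply Lemma~\ref{lemma:cut}, identify $\|M_0\|_F^2=q(M)mn$, and lower-bound the submatrix Frobenius norm by $q(M')mn/4$ via Claim~\ref{claim:variance}. The only difference is that you explicitly note $\rank(M_0)\leq r+1\leq 2r$ before absorbing the $\sqrt{2}$ into the constant, whereas the paper silently writes $\sqrt{r}$; your version is if anything slightly more careful.
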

\begin{proof}
Recall that $\disc(M)=\|M-p(M)J\|_C$. Therefore, if we set $M_0=M-p(M)J$, then $\disc(M)=\|M_0\|_C$ and $q(M)mn=\|M_0\|_F^2$. Applying Lemma~\ref{lemma:cut} to the matrix $M_0$, we get that there is a half-sized submatrix $M'\subseteq M$ such that
\[\disc(M)=\|M_0\|_C\geq \frac{c\sqrt{mn}}{\sqrt{r}} \frac{\|M'-p(M)J\|_F^2}{\|M_0\|_F}\geq \frac{c\sqrt{mn}}{\sqrt{r}} \frac{q(M')mn/4}{\sqrt{q(M)mn}}=\frac{cmn}{4}\cdot \frac{q(M')}{\sqrt{rq(M)}},\]
where we have used that $\|M'-p(M)J\|_F^2\geq q(M')mn/4$ by Claim~\ref{claim:variance}.
\end{proof}

Our next goal is to set up the proof of Theorem~\ref{thm:sparse_matrix} through three preparatory lemmas.
Throughout the section, we say that $M\in \mathbb{R}^{m\times n}$ is \emph{separated} if no entry of $M$ is in the open interval $(0, 1)$. First, we show that the variance of such matrices is large. 

\begin{lemma}\label{lemma:large_F}
Let $M$ be a separated $m\times n$ matrix with $0\leq p(M)\leq 0.9$. Then $q(M)\geq p(M)/100$.
\end{lemma}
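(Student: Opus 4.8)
The plan is to reduce the whole statement to the elementary inequality $x^2 \ge x$, which holds for every real $x \notin (0,1)$, applied entrywise to $M$. First I would rewrite the variance in the standard ``second moment minus square of the mean'' form: taking $t = p(M)$ in Claim~\ref{claim:variance}, or simply expanding the square directly,
\[
q(M) \;=\; \frac{1}{mn}\sum_{i,j} M(i,j)^2 \;-\; p(M)^2 .
\]

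Next I would use the hypothesis that $M$ is separated: every entry of $M$ lies outside the open interval $(0,1)$, and for any such real number $x$ we have $x(x-1)\ge 0$, that is, $x^2 \ge x$. Summing this inequality over all $mn$ entries gives $\sum_{i,j} M(i,j)^2 \ge \sum_{i,j} M(i,j) = mn\, p(M)$, hence $\frac{1}{mn}\sum_{i,j} M(i,j)^2 \ge p(M)$.

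Combining the two facts yields $q(M) \ge p(M) - p(M)^2 = p(M)\,(1-p(M))$. Finally, the assumption $p(M)\le 0.9$ forces $1 - p(M)\ge 0.1$, so $q(M)\ge 0.1\,p(M) \ge p(M)/100$, which is the claimed bound.

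There is essentially no obstacle here: the only points requiring (minimal) care are getting the direction of $x^2\ge x$ correct on $(-\infty,0]\cup[1,\infty)$, and noticing that the upper bound $p(M)\le 0.9$ is precisely what keeps $1-p(M)$ bounded away from $0$ (without such a bound, $p(M)(1-p(M))$ could be an arbitrarily small multiple of $p(M)$). The constant $1/100$ in the statement is not tight — the argument in fact gives $q(M)\ge p(M)/10$ — but the weaker form is all that is needed later in the proof of Theorem~\ref{thm:sparse_matrix}.
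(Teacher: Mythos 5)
Your proof is correct, and it takes a genuinely different route from the paper's. The paper fixes $p=p(M)$, introduces the auxiliary function $f(x)=(x-p)^2/x$, shows it is increasing on $[1,\infty)$, restricts the sum defining $q(M)mn$ to the entries $M(i,j)\ge 1$, writes each summand as $M(i,j)f(M(i,j))$, and then bounds $f(M(i,j))\ge f(1)=(1-p)^2$ and $\sum_{M(i,j)\ge 1}M(i,j)\ge pmn$ (the latter using separation to discard nonpositive entries). This yields $q(M)\ge p(1-p)^2$. You instead expand the variance as $q(M)=\tfrac1{mn}\sum M(i,j)^2-p^2$ and apply the entrywise inequality $x^2\ge x$, valid for all $x\notin(0,1)$, to get $q(M)\ge p-p^2=p(1-p)$. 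Both routes use separation in an essential way, but yours is more elementary (no auxiliary function or monotonicity analysis, no splitting of the index set) and produces the sharper intermediate bound $p(1-p)\ge p(1-p)^2$. In fact, your one-line computation already proves the stronger statement that the paper relegates to the appendix as Lemma~\ref{lemma:large_F_appendix} — $q(M)\ge p(M)(1-p(M))/100$ for every separated matrix, with no restriction on $p(M)$ — without the two-case reduction via $J-M$ used there. Your closing remarks (the direction of $x^2\ge x$, the role of $p\le 0.9$, and the non-tightness of $1/100$) are all accurate.
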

\begin{proof}
Let $p=p(M)$ be the average entry of $M$ and define the function $f(x) = \frac{(x-p)^2}{x} = x-2p+p^2/x$. Since $f'(x) = 1-p^2/x^2$, the function $f$ is increasing when $x\geq 1$, and so for $x\geq 1$ we have $f(x)\geq f(1)=(1-p)^2\geq 1/100$. Therefore,
\[q(M)mn \ge \sum_{M(i,j)\ge 1} (M(i,j)-p)^2 \ge \sum_{M(i,j)\ge 1}M(i,j)f(M(i, j)) \ge f(1)pmn\geq \frac{p}{100} mn.\]
In the above inequality, we have used that $\sum_{M(i,j)\ge 1}M(i,j)\ge \sum_{i, j}M(i,j) =pmn$, since all entries with $M(i, j)<1$ are actually nonpositive. 
\end{proof}

The core of the iteration is given by the following lemma, which is a consequence of Lemma~\ref{lemma:disc}. In essence, the next lemma states that a low-rank separated matrix either contains a half-sized submatrix in which the average entry decreases significantly, or a half-sized submatrix in which the variance drops tremendously. 

\begin{lemma}\label{lemma:two cases}
There exists an absolute constant $c\in (0, 10^{-4})$ such that for every $m\times n$ separated matrix $M$ of rank $r$, average entry $p\in (0,0.9)$, the following holds. There exists a submatrix $M'\subseteq M$ of size $m/2\times n/2$ such that  either
\begin{itemize}
    \item[(1)] $p(M')\leq p-c \sqrt{p/r}$ and $q(M')\leq 4q(M)$, or
    \item[(2)] $p(M')\leq p+12c\sqrt{p/r}$ and $q(M')\leq 2^{-100} q(M)$.
\end{itemize}
\end{lemma}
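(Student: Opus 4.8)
The plan is to build the dichotomy out of the discrepancy bound of Lemma~\ref{lemma:disc} together with the two halves of Claim~\ref{claim:half}, using the variance lower bound of Lemma~\ref{lemma:large_F} as the glue. First I would record a preliminary observation that will be used for free: for \emph{any} $m/2\times n/2$ submatrix $N$ of $M$ one has $q(N)\le 4q(M)$. Indeed $q(N)=\frac{4}{mn}\|N-p(N)J\|_F^2\le\frac{4}{mn}\|N-p(M)J\|_F^2\le\frac{4}{mn}\|M-p(M)J\|_F^2=4q(M)$, where the first inequality uses that $t\mapsto\|N-tJ\|_F^2$ is minimized at $t=p(N)$ (Claim~\ref{claim:variance}) and the second that deleting entries cannot increase a Frobenius norm. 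Consequently the ``$q(M')\le 4q(M)$'' requirement of conclusion~(1) is automatic, so for that conclusion it suffices to produce a half-sized submatrix whose average has dropped by $c\sqrt{p/r}$. Note also $q(M)\ge p/100>0$ by Lemma~\ref{lemma:large_F} — this is the one place separatedness enters — so all the quotients below make sense.

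Next I would apply Lemma~\ref{lemma:disc} to obtain an $m/2\times n/2$ submatrix $M_1\subseteq M$ with $\disc(M)\ge c_1 mn\cdot q(M_1)/\sqrt{r\,q(M)}$, where $c_1$ is the constant of that lemma, and then split into two cases. In \emph{Case A}, where $q(M_1)\le 2^{-100}q(M)$ and $\disc(M)\le 3c\,mn\sqrt{p/r}$, I take $M'=M_1$: the variance bound holds by hypothesis, and the second part of Claim~\ref{claim:half} gives $|p(M')-p|\le 4\disc(M)/(mn)\le 12c\sqrt{p/r}$, so we land in conclusion~(2). In \emph{Case B}, i.e.\ $q(M_1)>2^{-100}q(M)$ or $\disc(M)>3c\,mn\sqrt{p/r}$, I claim that in fact $\disc(M)\ge 3c\,mn\sqrt{p/r}$: if $\disc(M)>3c\,mn\sqrt{p/r}$ this is immediate, while if $q(M_1)>2^{-100}q(M)$ then substituting into Lemma~\ref{lemma:disc} and using $q(M)\ge p/100$ yields $\disc(M)\ge c_1 2^{-100}mn\sqrt{q(M)/r}\ge\frac{c_1 2^{-100}}{10}mn\sqrt{p/r}$, which is $\ge 3c\,mn\sqrt{p/r}$ provided $c\le c_1 2^{-100}/30$. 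With $\disc(M)$ this large, the first part of Claim~\ref{claim:half} produces an $m/2\times n/2$ submatrix $M'$ with $p(M')\le p-\disc(M)/(3mn)\le p-c\sqrt{p/r}$, and $q(M')\le 4q(M)$ by the preliminary observation; this is conclusion~(1). One then fixes $c=\min\{10^{-5},\,c_1 2^{-100}/30\}\in(0,10^{-4})$, which makes every inequality above valid with a single constant serving both conclusions.

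The conceptual content is entirely in Case~B, and the rest is bookkeeping. Morally the dichotomy reads: either the submatrix $M_1$ supplied by Lemma~\ref{lemma:disc} already has tiny variance — in which case we only need that its average has not grown much, which is exactly the content of ``$\disc(M)$ is small'' via Claim~\ref{claim:half}(ii) — or $M_1$ still has variance comparable to $q(M)$, but then, \emph{because $q(M)$ cannot drop below $p/100$}, Lemma~\ref{lemma:disc} forces $\disc(M)$ to be large on the natural scale $mn\sqrt{p/r}$, and large discrepancy is precisely what Claim~\ref{claim:half}(i) converts into a true decrease of the average. I expect the only real obstacle to be coordinating the chain of constants (matching $c$ against $c_1$ and the factors $3,4,12,2^{-100}$) so that the same $c$ works in both alternatives simultaneously; beyond that there is no difficulty once the three lemmas are aligned as above, and this lemma then feeds directly into the iteration behind Theorem~\ref{thm:sparse_matrix}.
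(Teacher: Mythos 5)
Your argument is correct and is essentially the paper's proof: both split on whether $\disc(M)$ is larger or smaller than $3c\,mn\sqrt{p/r}$, invoke Lemma~\ref{lemma:disc} together with $q(M)\ge p/100$ (Lemma~\ref{lemma:large_F}) to relate the two sides, and convert large/small discrepancy into the two conclusions via the two halves of Claim~\ref{claim:half}, with the same coordination $c\le c_0 2^{-100}/30$. Your case split is the contrapositive reorganization of the paper's (the paper fixes the $\disc$ threshold and \emph{derives} the variance drop, you fix the variance drop and \emph{derive} the discrepancy lower bound), which is logically equivalent.
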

\begin{proof}
Set $\alpha=2^{-100}$ and $c=\min\{\alpha c_0/30, 10^{-4}\}$, where $c_0$ is the constant coming from Lemma~\ref{lemma:disc}.

First of all, observe that if the discrepancy of the matrix $M$ is at least $\disc(M)\geq 3 c \sqrt{p/r} mn$, we have outcome $(1)$. Namely, by Claim~\ref{claim:half}, we have a half-sized submatrix $M'\subseteq M$ with $p(M') \leq p-\disc(M)/3mn\leq p-c\sqrt{p/r}$.
Moreover, $q(M')\leq 4q(M)$ for every half-sized submatrix $M'$ simply because $q(M')\leq \frac{\|M'-pJ\|_F^2}{m/2\cdot n/2}\leq 4\frac{\|M-pJ\|_F^2}{mn}=4q(M)$, where the first inequality comes from Claim~\ref{claim:variance}.

If the discrepancy of $M$ is at most $\disc(M)\leq 3 c \sqrt{p/r} mn$, we get outcome $(2)$. Lemma~\ref{lemma:disc} implies that there exists a half-sized submatrix $M'\subseteq M$ for which $\disc(M)\geq c_0 mn\frac{q(M')}{\sqrt{rq(M)}}$. Combining the two inequalities and using that $q(M)\geq p /100$ (which comes from Lemma~\ref{lemma:large_F}), we find
\[ 3 c  mn\sqrt{\frac{p}{r}} \geq \disc(M) \geq c_0 mn\frac{q(M')}{\sqrt{rq(M)}} = c_0 mn \sqrt{\frac{q(M)}{r}} \cdot \frac{q(M')}{q(M)}\geq \frac{c_0}{10} mn \sqrt{\frac{p}{r}} \cdot \frac{q(M')}{q(M)}.\]

Cancelling out $mn\sqrt{p/r}$ and recalling that $c\leq \alpha c_0/30$, we get $\alpha \geq q(M')/q(M)$, i.e. $q(M')\leq 2^{-100}q(M)$, Finally, since $\disc(M)\leq 3c\sqrt{p/r} mn$, we have $p(M')-p(M)\leq \frac{3cmn\sqrt{p/r}}{mn/4}=12c\sqrt{p/r}$ by Claim \ref{claim:half}. This completes the proof.
\end{proof}

In our final preparatory lemma, we show how $M$ can be regularized so that no entry of $M$ is too large. This ensures that the variance of the submatrices of $M$ cannot be too large either. Also, a similar argument shows that if the density of a separated matrix is small enough, then there is a half-sized all-zero submatrix.

\begin{lemma}\label{lemma:low_average}
Let $M$ be an $m\times n$ real matrix of rank $r$.
\begin{itemize}
    \item[(1)] If every entry of $M$ is non-negative, then $M$ contains a submatrix of size at least $0.9m\times 0.9n$ with every entry less than $400r^2 p(M)$.
    \item[(2)] If at most $mn/(16r)$ entries of $M$ are non-zero, then $M$ contains an all-zero submatrix of size~$m/2\times n/2$.
\end{itemize}
\end{lemma}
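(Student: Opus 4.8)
The plan is to prove both parts by a cleaning argument, deleting a small fraction of rows and columns so that what remains has the desired property. In part~(2) the rank bound is used to control the \emph{supports} of the rows of $M$; in part~(1) it is used to control the nuclear norm of $M$, and hence the sizes of its entries.

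For part~(2), first delete every row of $M$ that has at least $n/(2r)$ nonzero entries. Since $M$ has at most $mn/(16r)$ nonzero entries in total, this removes at most $(mn/(16r))\big/(n/(2r))=m/8$ rows, leaving a submatrix $M_1$ (still of rank $\le r$) in which every row has fewer than $n/(2r)$ nonzero entries. Now pick a maximal linearly independent set of \emph{nonzero} rows of $M_1$, say $R_{i_1},\dots,R_{i_s}$ with $s\le r$, and set $C=\bigcup_{\ell=1}^{s}\mathrm{supp}(R_{i_\ell})$, so that $|C|< s\cdot n/(2r)\le n/2$. Every row of $M_1$ is a linear combination of $R_{i_1},\dots,R_{i_s}$ (the zero rows trivially, the other rows by maximality), hence its support is contained in $C$. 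Thus $M_1$ vanishes on the at least $n/2$ columns outside $C$, producing an all-zero submatrix of size at least $(7m/8)\times(n/2)$, which in particular contains one of size $(m/2)\times(n/2)$.

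For part~(1), write $p=p(M)$ (we may assume $p>0$) and clean in two rounds. In the first round, delete every row whose sum exceeds $20rpn$ and every column whose sum exceeds $20rpm$; since all row sums (resp.\ column sums) add up to $pmn$, this removes at most $m/(20r)\le m/20$ rows and at most $n/(20r)\le n/20$ columns, leaving a submatrix $M_1$ of size $m_1\times n_1\ge (0.95m)\times(0.95n)$ with all row sums $\le 20rpn$ and all column sums $\le 20rpm$. The Schur test $\|M_1\|_{\mathrm{op}}\le\|M_1\|_{1\to1}^{1/2}\|M_1\|_{\infty\to\infty}^{1/2}$ then gives $\|M_1\|_{\mathrm{op}}\le 20rp\sqrt{mn}$, so the nuclear norm satisfies $\sigma:=\sum_k\sigma_k(M_1)\le r\,\|M_1\|_{\mathrm{op}}\le 20r^2p\sqrt{mn}$. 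In the second round, take the singular value decomposition $M_1=U\Sigma V^{T}$ and set $a_i=(U\Sigma^{1/2})_{i,\cdot}$ and $b_j=(\Sigma^{1/2}V^{T})_{\cdot,j}$, so that $M_1(i,j)=\langle a_i,b_j\rangle$ and $\sum_i\|a_i\|^2=\sum_j\|b_j\|^2=\sigma$. By Markov's inequality, deleting the at most $m_1/20\le m/20$ rows with $\|a_i\|^2>20\sigma/m_1$ and the at most $n_1/20\le n/20$ columns with $\|b_j\|^2>20\sigma/n_1$ leaves a submatrix of size at least $(0.9m)\times(0.9n)$ (at most $m/10$ rows and $n/10$ columns deleted overall) in which every entry is at most $\|a_i\|\|b_j\|\le 20\sigma/\sqrt{m_1n_1}=O(r^2p)$; tuning the thresholds of the two rounds slightly makes this at most $400r^2p$.

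Part~(2) is routine — the only point is that taking linear combinations of rows can only shrink supports. The main obstacle is in part~(1): a priori the entries of $M$, and hence $\|M\|_F$ and the nuclear norm $\sigma$, are completely unbounded, so the factorization $M(i,j)=\langle a_i,b_j\rangle$ is worthless on its own. The purpose of the first cleaning round is precisely to bound $\|M\|_{\mathrm{op}}$, and therefore $\sigma$, in terms of the average $p$ through the row and column sums, after which the second round is a direct Markov argument. The only delicate point is then the bookkeeping of constants: one must ensure that the two rounds together delete at most a $0.1$-fraction of the rows and of the columns while still driving every surviving entry below $400r^2p$.
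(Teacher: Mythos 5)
Both parts take genuinely different routes from the paper, which uses a single bipartite ``induced matching'' argument (plus Gershgorin for part~(1)). Your part~(2) argument is correct and, if anything, cleaner: deleting the $\le m/8$ dense rows leaves a matrix $M_1$ of rank $\le r$ whose rows all lie in the span of $\le r$ surviving rows of support $<n/(2r)$ each, so all of $M_1$ is supported on $<n/2$ columns and the rest is an all-zero block. Your part~(1) argument is also a legitimate alternative: a first Markov cleaning of rows/columns bounds $\|M_1\|_{\infty\to\infty}$ and $\|M_1\|_{1\to 1}$, the Schur test bounds $\|M_1\|_{\mathrm{op}}$ and hence the nuclear norm $\sigma\le r\|M_1\|_{\mathrm{op}}$, and then the SVD factorization $M_1(i,j)=\langle a_i,b_j\rangle$ with $\sum_i\|a_i\|^2=\sum_j\|b_j\|^2=\sigma$ together with a second round of Markov gives the bound. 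This is close in spirit to the proof of Lemma~5.2 (the $\gamma_2^*$/cut-norm bound) rather than to the paper's proof of this lemma, and, notably, with the right threshold it gives entries $O(rp)$ rather than $O(r^2p)$, so it is actually quantitatively stronger than the stated lemma for $r\ge 2$.

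The one genuine gap is the constant, and the ``tune the thresholds'' escape hatch does not exist. With your thresholds ($20rpn$ in round one, $20\sigma/m_1$ in round two) the surviving entries are bounded by $20\sigma/\sqrt{m_1n_1}\le 400r^2p\sqrt{mn/(m_1n_1)}$, and since $m_1\le m$, $n_1\le n$, this is \emph{always} $\ge 400r^2p$, never $\le$. More generally, if round one uses thresholds $\alpha pn$, $\alpha pm$ and round two uses $\beta\sigma/m_1$, $\beta\sigma/n_1$, keeping at least $0.9m$ rows and $0.9n$ columns forces $1/\alpha+1/\beta\le 1/10$, and the resulting entry bound is $\alpha^2\beta rp/(\alpha-1)$, whose minimum over the feasible region (attained near $\alpha=11+\sqrt{91}$) is about $421rp$. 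This beats $400r^2p$ when $r\ge 2$, but for $r=1$ it is $\approx 421p>400p$, so the lemma as stated with the literal constant $400$ is not established. The fix is short and should be included: for $r=1$, write $M=uv^T$ with $u,v\ge 0$ and delete the $m/10$ largest $u_i$ and $n/10$ largest $v_j$, leaving entries $\le 100p$. (Admittedly the exact constant $400$ plays no structural role downstream --- any absolute-constant multiple of $r^2p$ would feed into the proof of Theorem~\ref{thm:sparse_matrix} --- but since you claim the stated bound, the $r=1$ case needs its own line.)
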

\begin{proof}
(1) Let $p=p(M)$ and $\Delta=400r^2p$. Consider the bipartite graph $G$ whose vertex classes are $A=[m]$ and $B=[n]$, and $i\in A$ and $j\in B$ are joined by an edge if $M(i,j)\geq 400rp$. Then $G$ has at most $mn/(400r)$ edges. Remove all vertices of $A$ of degree more than $n/(20r)$ and all vertices of $B$ of degree more than $m/(20r)$, and let $G'$ be the resulting bipartite graph on vertex classes $A'\subset A$ and $B'\subset B$. Then $|A'|\geq \frac{19}{20}m,|B'|\geq \frac{19}{20}n$. Color each edge $(i,j)\in A'\times B'$ of $G'$ red if $M(i,j)\geq \Delta$.

Let $F$ be a maximal induced matching of $G'$ that consists only of red edges. We show that $|F|\leq r$. Otherwise, assume that $|F|\geq r+1$, and let $a_1,\dots,a_{r+1}\in A'$ and $b_1,\dots,b_{r+1}\in B'$ such that $\{a_i,b_i\}\in F$. After possibly permuting the rows and columns of $M$, the submatrix of $M$ spanned by rows  $\{a_1,\dots,a_{r+1}\}$ and columns $\{b_1,\dots,b_{r+1}\}$ is an $(r+1)\times (r+1)$ matrix in which the diagonal entries are at least $\Delta$, while the non-diagonal entries are less than $400rp=\Delta/r$. Such a matrix has full rank, for example by the Gershgorin circle theorem. This contradicts that $M$ has rank $r$.

Now remove every vertex of $G'$ which is connected by an edge to some vertex of $F$. Then we removed at most $|F|\cdot n/(20r)\leq n/20$ vertices from $B'$ and similarly at most $m/20$ vertices from $A'$, so the resulting bipartite graph $G''$ on vertex classes $A''$ and $B''$ satisfies $|A''|\geq 0.9m, |B''|\geq 0.9n$. Furthermore, $G''$ contains no red edges, otherwise we run into a contradiction with the maximality of $F$. In conclusion, $M[A''\times B'']$ is submatrix of $M$ of size at least $0.9m\times 0.9n$ with every entry less than~$\Delta$. 

(2) The proof of this part proceeds similarly. However, we define the graph $G$ by connecting $i$ and $j$ if $M(i,j)\neq 0$, then $G$ has at most $mn/(16r)$ edges. We find $A'\subset [m]$ and $B'\subset [n]$ such that $|A'|\geq 3m/4,|B'|\geq 3n/4$, and no vertex in $A'$ has degree more than $n/(4r)$ and no vertex of $B'$ has degree more than $m/(4r)$. Let $G'$ be the subgraph of $G$ induced by $A'\cup B'$.

Let $F'$ be the maximal induced matching in $G'$. Then $|F'|\leq r$, as $F'$ corresponds to a diagonal submatrix of $M$ with non-zero entries on the diagonal. Removing all neighbours of the vertices of $F'$, we remove at most $m/4$ vertices $A'$ and $n/4$ vertices from $B'$. Hence, denoting the remaining sets by $A''$ and $B''$, we have $|A''|\geq m/2$, $|B''|\geq n/2$, and $M[A''\times B'']$ is an all-zero submatrix.
\end{proof}

Finally we are ready to prove Theorem \ref{thm:sparse_matrix}, which we restate here for the reader's convenience.

\begin{theorem}\label{thm:average}
Let $M\in\mathbb{R}^{m\times n}$  be a rank $r$ matrix whose every entry is 0 or at least 1. If $p(M)=p \leq 1/2$, then $M$ contains an all-zero submatrix of size $n2^{-O(\sqrt{pr}+\log r)}$. 
\end{theorem}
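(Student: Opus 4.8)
\medskip\noindent\textbf{Proof plan.} The plan is to repeatedly replace the current matrix by a half-sized submatrix using Lemma~\ref{lemma:two cases}, thereby forcing the average entry down to $O(1/r)$, and then to invoke Lemma~\ref{lemma:low_average}(2) to extract a large all-zero submatrix. As a preprocessing step I would first apply Lemma~\ref{lemma:low_average}(1) to pass to a submatrix of size at least $0.9m\times 0.9n$ in which every entry is at most $\Delta:=400r^2p(M)\le 200r^2$. This loses only a constant factor but has the crucial effect of bounding the variance of every submatrix encountered later by $q\le \Delta^2=O(r^4)$, so that $\log_2 q=O(\log r)$ throughout. Rename this matrix $M_0$, write $p_0=p(M_0)\le 1/2$, and note it is still separated and of rank at most $r$.

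Now I would iterate Lemma~\ref{lemma:two cases}: as long as $1/(16r)<p(M_i)<0.9$, it returns a half-sized submatrix $M_{i+1}$ that is either of \emph{type~1} ($p(M_{i+1})\le p(M_i)-c\sqrt{p(M_i)/r}$ and $q(M_{i+1})\le 4q(M_i)$) or of \emph{type~2} ($p(M_{i+1})\le p(M_i)+12c\sqrt{p(M_i)/r}$ and $q(M_{i+1})\le 2^{-100}q(M_i)$). Stop at the first index where $p(M_i)\le 1/(16r)$ and let $N=N_1+N_2$ be the number of steps, split by type. To bound $N$ I would run two monovariants in parallel. Watching $v=\log_2 q$: a type-1 step raises $v$ by at most $2$ and a type-2 step lowers it by at least $100$, while during the iteration $q\ge p(M_i)/100>1/(1600r)$ by Lemma~\ref{lemma:large_F}; since $v$ also starts at $O(\log r)$, it lives in a window of width $O(\log r)$, which gives $100N_2\le 2N_1+O(\log r)$. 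Watching $\sqrt{p}$: the concavity estimate $\sqrt{p-x}\le \sqrt p-x/(2\sqrt p)$ (legitimate in our regime $p\ge 1/(16r)$) shows a type-1 step decreases $\sqrt{p}$ by at least $c/(3\sqrt r)$ and a type-2 step increases it by at most $6c/\sqrt r$, so from $\sqrt{p(M_N)}\ge 0$ we get $N_1\cdot c/(3\sqrt r)\le \sqrt{p_0}+N_2\cdot 6c/\sqrt r$. Combining the two inequalities yields $N_1,N_2=O(\sqrt{p_0\,r}+\log r)=O(\sqrt{pr}+\log r)$.

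Before the above is legitimate, I must check that the iteration never leaves the regime $p<0.9$ required by Lemmas~\ref{lemma:two cases} and~\ref{lemma:large_F}, since type-2 steps push $p$ upward. I would rule this out directly: were $i^{*}$ the first index with $p(M_{i^{*}})\ge 0.9$, let $j<i^{*}$ be the last index with $p(M_j)\le 0.7$; on the whole stretch $j,\dots,i^{*}$ the average stays below $0.9$ (so every step there is legitimate), only type-2 steps raise $p$, and each raises it by at most $12c\sqrt{0.9/r}=O(c/\sqrt r)$, so crossing the gap $[0.7,0.9]$ requires $\Omega(\sqrt r)$ type-2 steps inside this stretch. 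But since $p$ cannot dip below $0.7$ on the stretch, the number of type-1 steps there is at most a constant times the number of type-2 steps, so the variance monovariant forces $v$ to drop by $\Omega(\sqrt r)$ across the stretch, contradicting $\log_2 q=O(\log r)$. Hence $p<0.9$ always, and the estimates of the previous paragraph apply.

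Finally, after $N=O(\sqrt{pr}+\log r)$ steps we reach a submatrix of size at least $2^{-N}m\times 2^{-N}n$ whose average entry is at most $1/(16r)$; as every nonzero entry is at least $1$, at most a $1/(16r)$ fraction of its entries are nonzero, and Lemma~\ref{lemma:low_average}(2) then yields an all-zero submatrix of half that size, i.e.\ of size at least $m\,2^{-O(\sqrt{pr}+\log r)}\times n\,2^{-O(\sqrt{pr}+\log r)}$, which gives the claimed bound. I expect the delicate part to be exactly this bookkeeping: choosing the absolute constant $c$ from Lemma~\ref{lemma:two cases} small enough and balancing the $\log_2 q$ and $\sqrt p$ monovariants so that the iteration simultaneously stays inside $p<0.9$ and terminates within $O(\sqrt{pr}+\log r)$ steps rather than within a weaker bound.
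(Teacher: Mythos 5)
Your proposal is correct and follows essentially the same route as the paper's proof: preprocess via Lemma~\ref{lemma:low_average}(1) to bound the entries, iterate Lemma~\ref{lemma:two cases} to drive the average down to $1/(16r)$, and finish with Lemma~\ref{lemma:low_average}(2). The only organizational difference is that you track $\sqrt{p}$ and $\log_2 q$ as two separate monovariants and solve a small linear system for $N_1,N_2$, whereas the paper packages them into a single progress function $f(p,q)=\sqrt{rp}+\tfrac{c}{10}\log_2(\Delta q)$ that decreases by a fixed constant per step (which also disposes of the escape to $p\ge 0.8$ automatically, making your separate stretch argument unnecessary); one small slip to fix is that after preprocessing $p(M_0)$ can be as large as $p/0.81$, not $\le 1/2$, but this is harmless for the bound.
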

\begin{proof}
We may assume that $r$ is sufficiently large. By Lemma \ref{lemma:low_average}, we find a square submatrix $M_0$ of size at least $0.9 m\times 0.9n$ such that every entry of $M_0$ is at most $400r^2p\leq 400r^2=:\Delta$. The average entry of $p_0=p(M_0)$ satisfies $p_0\leq p/0.9^2\leq 1.24 p<0.7$, since $p\leq 1/2$. Also, writing $q_0=q(M_0)$, we have $q_0\leq 10^6r^4$.

We now define a sequence of matrices $M_0,M_1,\dots, M_t$ such that $M_{i+1}$ is a half-sized submatrix of $M_{i}$ for $i=0, 1, \dots, t$. As long as $p(M_i) \in (\frac{1}{16r}, 0.8)$, we apply Lemma~\ref{lemma:two cases} to $M_i$, and choose $M_{i+1}$ to be a half-sized submatrix which satisfies one of the outcomes of the lemma. We stop the process at the first index $t$ for which $p_t\leq \frac{1}{16r}$ or $p_t\geq 0.8$. 

To sum up, writing $p_i=p(M_i)$ and $q_i=q(M_i)$, for $i=0, 1, \dots, t-1$, we get one of the following two conclusions 
\begin{itemize}
    \item[(1)] $p_{i+1}\leq p_i-c\sqrt{p_i/r}$ and $q_{i+1}\leq 4q_i$, or
    \item[(2)] $p_{i+1}\leq p_i+12c \sqrt{p_i/r}$ and $q_{i+1}\leq 2^{-100}q_i$.
\end{itemize}
 Here, $c\in (0, 10^{-4})$ denotes the absolute constant coming from Lemma~\ref{lemma:two cases}.

Assume for a second that the process stops because $p_t\leq \frac{1}{16r}$ at some point.
The matrix $M_t$ has size $m_t\times n_t$, where $m_t\geq 2^{-t-1}m, n_t\geq 2^{-t-1}n$, and at most $m_tn_t/(16r)$ non-zero entries. By Lemma \ref{lemma:low_average}, $M_t$ contains an all-zero submatrix of size at least $2^{-t-2}m\times 2^{-t-2}n$. Therefore, in order to prove the theorem, we need to show that the process stops after fewer than $t=O(\sqrt{pr}+\log r)$ steps with $p_t\leq 1/(16 r)$.

Note that $p_i\leq 0.9$ for all $i\leq t$. For $i\leq t-1$, this is obvious since $p_i\leq 0.8$, but we also have $p_t\leq p_{t-1}+12c\leq 0.8+0.1$, since $c\leq 10^{-4}$. Hence, if $q_i\leq \frac{1}{10^4 r}$ for some $i$, then $p_i\leq 100q_i\leq \frac{1}{100r}$, showing that $i=t$, i.e that the process stopped with $p_t\leq 1/(16r)$.

In order to analyze the process, we define the progress function \[f(p_i, q_i)=\sqrt{r p_i}+\frac{c}{10}\log_2 (\Delta q_i).\] First of all, for all $i<t$, we have $f(p_i, q_i)\geq 0$, since $p_i\geq 0$ and $q_i\geq \frac{1}{10^4 r}$. Also, $f(p_0, q_0)\leq O(\sqrt{pr}+\log r)$, since $q_0\leq \Delta^2$. Finally, the key property of the function $f$ is that at every step of the procedure it is decreased by an absolute constant. This means that within $O(\sqrt{pr}+\log r)$ steps, one gets to the situation where $p_i\leq \frac{1}{16 r}$, thus stopping the iteration.

We now prove the key property of $f$, i.e. that for all $0\leq i\leq t-1$, we have \[f(p_{i+1}, q_{i+1})\leq f(p_i, q_i)-\frac{c}{4}.\]
Distinguish two cases, based on the outcome of Lemma~\ref{lemma:two cases}. In case of outcome (1), we have $rp_{i+1}\leq rp_i-c\sqrt{rp_i}$ and $q_{i+1}\leq 4q_i$, thus giving
\[f(p_{i+1}, q_{i+1})\leq \sqrt{rp_i-c\sqrt{rp_i}}+\frac{c}{10}\log_2(4\Delta q_i)\leq \sqrt{rp_i}-\frac{c}{2}+\frac{c}{10}\log_2(\Delta q_i)+\frac{c}{5}\leq f(p_i, q_i)-\frac{c}{4}.\]
In the second inequality, we used the general inequality $\sqrt{a+\delta\sqrt{a}}\leq \sqrt{a}+\delta/2$, which holds for all $\delta\geq -\sqrt{a}$, and can be proven by squaring both sides.

In case of outcome (2), we have $rp_{i+1}\leq rp_i+12 c\sqrt{rp_i}$ and $q_{i+1}\leq 2^{-100}q_i$, thus giving
\[f(p_{i+1}, q_{i+1})\leq \sqrt{rp_i+12c\sqrt{rp_i}}+\frac{c}{10}\log_2(2^{-100}\Delta q_i)\leq \sqrt{rp_i}+6c+\frac{c}{10}\log_2(\Delta q_i)-10c\leq f(p_i, q_i)-\frac{c}{4}.\]

The proof now follows easily. Since $f(p_t, q_t)\leq f(p_0, q_0)\leq \sqrt{rp_0}+\frac{c}{10}\log(10^6r^4)$, we have $\sqrt{rp_t}\leq \sqrt{0.7 r}+O(\log r)$. This also shows that $p_t\geq 0.8$ cannot happen, using our assumption that $r$ is sufficiently large. Furthermore, since $0\leq f(p_i, q_i)\leq f(p_0, q_0)-ci/4$ for all $i\leq t-1$ and $f(p_0, q_0)=O(\sqrt{pr}+\log r)$, the process must stop after at most $\frac{4}{c}\cdot f(p_0, q_0)=O(\sqrt{pr}+\log r)$ steps. This finishes the proof.
\end{proof}

\section{Constructions} \label{sect:construction}

In this section, we prove the claimed properties of the constructions discussed in the Introduction. Recall that we defined the $n\times n$ matrix $M_2$ as the intersection matrix of $(\cF, \cF)$, where $\cF=\binom{[r]}{k}$ is the family of all $k$-element subsets of $[r]$ and $k=\sqrt{\eps r}, n=\binom{r}{k}$.

\begin{lemma}
The matrix $M_2$ satisfies the following properties.
\begin{itemize}
    \item[1.] $M_2$ has $\Theta(\eps n^2)$ non-zero entries.
    \item[2.] $M_2$ contains no square all-zero submatrix of size larger than $2^{-\sqrt{\eps r}}n$.
    \item[3.] The average of the entries of $M_2$ is $\eps$.
\end{itemize}
\end{lemma}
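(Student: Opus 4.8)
The plan is to verify the three properties by direct computation with binomial coefficients, using the standard facts about the family $\cF=\binom{[r]}{k}$ with $k=\sqrt{\eps r}$ and $n=\binom{r}{k}$.

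\textbf{Property 1 (number of nonzero entries).} An entry $M_2(A,B)=|A\cap B|$ is nonzero precisely when $A$ and $B$ intersect. So I need to count ordered pairs $(A,B)\in\cF^2$ with $A\cap B\neq\varnothing$, equivalently estimate the number of disjoint pairs and subtract. The number of $B\in\binom{[r]}{k}$ disjoint from a fixed $A$ is $\binom{r-k}{k}$, so the number of disjoint ordered pairs is $n\binom{r-k}{k}$, and hence the number of intersecting pairs is $n\bigl(n-\binom{r-k}{k}\bigr)$. The key estimate is
\[
\frac{\binom{r-k}{k}}{\binom{r}{k}}=\prod_{i=0}^{k-1}\frac{r-k-i}{r-i}=\prod_{i=0}^{k-1}\Bigl(1-\frac{k}{r-i}\Bigr).
\]
Since $k=\sqrt{\eps r}\le \sqrt{r}$ and we take $i<k$, each factor is $1-\Theta(k/r)$, and taking logarithms the product is $\exp\bigl(-\Theta(k^2/r)\bigr)=\exp(-\Theta(\eps))=1-\Theta(\eps)$ for $\eps$ bounded away from the regime where this degenerates. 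Therefore the fraction of intersecting pairs is $\Theta(\eps)$, giving $\Theta(\eps n^2)$ nonzero entries. One should be slightly careful to state this for $\eps$ in a suitable range (say $\eps$ at most a small constant, and $\eps r\ge 1$ so that $k\ge 1$); the hypothesis $\eps\in(0,1/2)$ together with $r$ large makes the estimate clean.

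\textbf{Property 3 (average entry).} By linearity, $p(M_2)=\frac{1}{n^2}\sum_{A,B}|A\cap B|=\frac{1}{n^2}\sum_{x\in[r]}\#\{(A,B): x\in A\cap B\}$. For a fixed $x$, the number of $A\in\binom{[r]}{k}$ containing $x$ is $\binom{r-1}{k-1}=\frac{k}{r}n$, so the count is $\bigl(\frac{k}{r}n\bigr)^2$, and summing over $r$ values of $x$ gives $p(M_2)=r\cdot\frac{k^2}{r^2}=\frac{k^2}{r}=\eps$ exactly. This is the cleanest of the three.

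\textbf{Property 2 (no large all-zero submatrix).} This is the main point and the only one requiring a genuine argument rather than a computation. Suppose $\cR,\cS\subseteq\cF$ are cross-disjoint, i.e. $R\cap S=\varnothing$ for all $R\in\cR$, $S\in\cS$; I must show $|\cR|\cdot|\cS|\le 2^{-2\sqrt{\eps r}}n^2$, or rather $\sqrt{|\cR||\cS|}\le 2^{-\sqrt{\eps r}}n$. The plan is to use a Bollob\'as-type / shifting inequality, or more simply the following counting argument: let $X=\bigcup_{S\in\cS}S$ be the union of all sets in $\cS$; then every $R\in\cR$ is a $k$-subset of $[r]\setminus X$, so $|\cR|\le\binom{r-|X|}{k}$, while $|X|\ge k$ (it contains any one $S$), so $|\cR|\le\binom{r-k}{k}$. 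Symmetrically $|\cS|\le\binom{r-k}{k}$. Hence $|\cR||\cS|\le\binom{r-k}{k}^2$, and by the estimate from Property 1, $\binom{r-k}{k}/\binom{r}{k}=\exp(-\Theta(\eps))$. This gives a bound of the form $(1-\Theta(\eps))^2 n^2$, which is $2^{-\Theta(\eps r/\cdots)}$... wait --- this is \emph{not} strong enough, it only saves a $(1-\Theta(\eps))$ factor, not $2^{-\sqrt{\eps r}}=2^{-k}$. The honest approach must be stronger: one should instead bound things so that the union $X=\bigcup_{S\in\cS}S$ is large. The right idea: if $|\cS|$ is large then $X$ cannot be too small, because a family of many $k$-sets cannot all fit in a tiny universe; quantitatively $|\cS|\le\binom{|X|}{k}$, so $|X|\ge$ (the least $u$ with $\binom{u}{k}\ge|\cS|$). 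Combining $|\cR|\le\binom{r-|X|}{k}$ with this lower bound on $|X|$, and optimizing, should yield $|\cR||\cS|\le 2^{-2k}n^2$ up to lower-order terms. The clean way to run the optimization is via entropy/Kruskal--Katona-type bounds, or by the substitution $|\cS|=\binom{s}{k}$, so $|X|\ge s$, $|\cR|\le\binom{r-s}{k}$, and then one checks $\binom{s}{k}\binom{r-s}{k}\le\binom{r}{k}^2 2^{-2k}$ by writing the ratio as a product of $k$ fractions each of the form $\frac{(s-i)(r-s-i)}{(r-i)(r-i)}\le\frac{s(r-s)}{r^2}\cdot(1+o(1))\le \tfrac14(1+o(1))$ (AM-GM), giving the factor $4^{-k}\le 2^{-k}$ with room to spare. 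I expect the main obstacle is making this last chain of inequalities fully rigorous for all valid ranges of $s$ (including the degenerate endpoints $s<k$ or $s>r-k$, where one of $\cR,\cS$ is forced to be small or empty), and confirming the constant in the exponent is at least $\sqrt{\eps r}=k$; everything else is routine binomial asymptotics.
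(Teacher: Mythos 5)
Your verifications of Properties 1 and 3 are correct and essentially match the paper's calculations. For Property 2 you end up with a correct argument, but via a genuinely different and more elaborate route than the paper. You correctly self-diagnosed that bounding $|\cR|\le\binom{r-k}{k}$ and $|\cS|\le\binom{r-k}{k}$ is too weak, then recovered by introducing the union $X=\bigcup_{S\in\cS}S$, bounding $|\cS|\le\binom{|X|}{k}$, $|\cR|\le\binom{r-|X|}{k}$, and optimizing: $\frac{\binom{s}{k}\binom{r-s}{k}}{\binom{r}{k}^2}=\prod_{i=0}^{k-1}\frac{(s-i)(r-s-i)}{(r-i)^2}\le 4^{-k}$ since each factor is at most $\bigl(\frac{r-2i}{2(r-i)}\bigr)^2\le\frac{1}{4}$ by AM--GM. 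This does work and even bounds $\sqrt{|\cR||\cS|}\le 2^{-k}n$ for rectangular cross-disjoint pairs. The paper's argument is much shorter: since $\cR$ and $\cS$ are cross-disjoint, there is a partition $[r]=U\cup V$ with $\bigcup\cR\subseteq U$ and $\bigcup\cS\subseteq V$; one of $U,V$ has size at most $r/2$, say $|U|\le r/2$, whence $|\cR|\le\binom{r/2}{k}=\binom{r}{k}\prod_{i<k}\frac{r/2-i}{r-i}\le 2^{-k}\binom{r}{k}=2^{-\sqrt{\eps r}}n$, and for a \emph{square} submatrix $|\cR|=|\cS|$, so both sides are bounded. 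Your version buys a geometric-mean bound valid for rectangular submatrices, but at the cost of the extra optimization over $s$ and the endpoint bookkeeping you flagged; since the lemma only asserts the square case, the paper's one-liner is the tighter fit for what is being proved.
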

\begin{proof}
1. For every $A\in \binom{[r]}{k}$, the number of sets $B\in \binom{[r]}{k}$ disjoint from $A$ is 
$$\binom{r-k}{k}=\binom{r}{k}\frac{r-k}{r}\dots \frac{r-2k+1}{r-k}=n\cdot \left(1-\frac{k}{r}\right)\dots\left(1-\frac{k}{r-k}\right)=ne^{-\Theta(k^2/r)}.$$
Here, $k^2/r=\eps$, so we have $ne^{-\Theta(k^2/r)}=ne^{-\Theta(\eps)}=(1-\Theta(\eps)) n$. In conclusion, the total number non-zero entries of $M$ is $\Theta(\eps n^2)$.

\noindent
2. Note that if an all-zero submatrix has rows and columns indexed by families $\cA,\cB\subseteq \binom{[r]}{k}$, then $\cA$ and $\cB$ must  be cross-disjoint. That is, there exists a partition $[r]=U\cup V$ such that all sets of $\cA$ are contained in $U$, and all sets of $\cB$ are contained in $V$. Since one of $U$ and $V$ is at most $r/2$, say $|U|\leq r/2$, we have $|\cA|\leq \binom{r/2}{k}\leq \frac{1}{2^{k}}\binom{r}{k}=2^{-\sqrt{\eps r}}n$. Therefore, there does not exist a square all-zero submatrix of $M$ of size larger than $2^{-\sqrt{\eps r}}n$.

\noindent
3. As every row of $M$ is identical up to permutation, it is enough to calculate the average size of the intersection $|A\cap B|$ for a fixed set $A\in \binom{[r]}{k}$. This is equal to the expected value of $|A\cap B|$, if $B$ is chosen randomly from the uniform distribution on $\binom{[r]}{k}$. Since for every $i\in [r]$ we have $\mathbb{P}[i\in B]=\frac{k}{r}$, we conclude $\mathbb{E}\big[|A\cap B|\big]=\sum_{i\in A}\Pb[i\in B]=k\cdot \frac{k}{r}=\eps$.
\end{proof}

Recall now the matrix $M_3$, which is defined in the Introduction as $M_3=N-\frac{r}{4}J$, where $N$ is the intersection matrix of $(2^{[r]}, 2^{[r]})$.

\begin{lemma}
The matrix $M_3$ contains $\Omega(n^2/\sqrt{r})$ zero entries, but it contains no all-zero submatrix of size $s\times t$ if $st> 2^{r}$.
\end{lemma}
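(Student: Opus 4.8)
The plan is to handle the two assertions separately; both are short. Throughout write $n=2^{r}$ and identify a set $A\subseteq[r]$ with its indicator vector in $\bF_2^{r}$, so that $M_3(A,B)=|A\cap B|-r/4$ and $M_3(A,B)=0$ precisely when $|A\cap B|=r/4$.

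\emph{Counting zero entries.} The number of zeros of $M_3$ is the number of pairs $(A,B)\in 2^{[r]}\times 2^{[r]}$ with $|A\cap B|=r/4$. I would count these probabilistically: sampling $(A,B)$ uniformly among all $4^{r}$ pairs is the same as independently placing each $i\in[r]$ into one of the four cells $A\cap B,\ A\setminus B,\ B\setminus A,\ [r]\setminus(A\cup B)$, each with probability $1/4$; hence $|A\cap B|\sim\mathrm{Bin}(r,1/4)$ and the number of zero entries equals $4^{r}\,\Pb[\mathrm{Bin}(r,1/4)=r/4]=\binom{r}{r/4}3^{3r/4}$. Since $4\mid r$, the value $r/4$ is the integer mean of $\mathrm{Bin}(r,1/4)$, so the routine Stirling estimate for $\binom{r}{r/4}$ (equivalently a local central limit theorem) gives $\Pb[\mathrm{Bin}(r,1/4)=r/4]=\Theta(1/\sqrt r)$, whence $M_3$ has $\Theta(n^{2}/\sqrt r)$, and in particular $\Omega(n^{2}/\sqrt r)$, zero entries.

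\emph{No large all-zero submatrix.} Let $\cA,\cB\subseteq 2^{[r]}$ index the rows and columns of an all-zero submatrix; I must show $|\cA|\,|\cB|\le 2^{r}$. By the above, $|A\cap B|=r/4$ for all $A\in\cA$, $B\in\cB$, so reducing mod $2$, $\langle a,b\rangle\equiv r/4\pmod 2$ for all $a\in\cA$, $b\in\cB$, where $\langle\cdot,\cdot\rangle$ is the standard (non-degenerate) bilinear form on $\bF_2^{r}$. Both families are nonempty, so fix $a_0\in\cA$ and $b_0\in\cB$; then for all $a\in\cA$, $b\in\cB$,
\[\langle a-a_0,\ b-b_0\rangle=\langle a,b\rangle-\langle a,b_0\rangle-\langle a_0,b\rangle+\langle a_0,b_0\rangle\equiv 0\pmod 2 .\]
Thus the subspaces $V=\spn_{\bF_2}\{a-a_0:a\in\cA\}$ and $W=\spn_{\bF_2}\{b-b_0:b\in\cB\}$ of $\bF_2^{r}$ are orthogonal, so $W\subseteq V^{\perp}$ and $\dim V+\dim W\le r$. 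Since $|\cA|\le 2^{\dim V}$ and $|\cB|\le 2^{\dim W}$, we get $|\cA|\,|\cB|\le 2^{r}$, so no all-zero $s\times t$ submatrix exists when $st>2^{r}$. (Alternatively, this is exactly Sgall's bound for cross-$\lambda$-intersecting families \cite{Sgall}.)

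I do not anticipate a real obstacle: both parts reduce to a single idea each. The only computation needing care is the anticoncentration estimate $\Pb[\mathrm{Bin}(r,1/4)=r/4]=\Theta(1/\sqrt r)$, which is a routine Stirling/local-limit bound; and the one step worth isolating is the recentering in the second part, which turns the constraint that $|A\cap B|$ is constant into genuine $\bF_2$-orthogonality of the translated families $\cA-a_0$ and $\cB-b_0$, after which the dimension count $\dim V+\dim W\le r$ is immediate.
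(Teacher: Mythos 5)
Your proof is correct, and it diverges from the paper's in a way worth flagging. For the zero count you do exactly what the paper does -- count pairs with $|A\cap B|=r/4$ as $\binom{r}{r/4}3^{3r/4}$ and invoke Stirling to get $\Theta(4^r/\sqrt r)=\Theta(n^2/\sqrt r)$; your probabilistic phrasing via $\mathrm{Bin}(r,1/4)$ is just a repackaging of the same computation. For the second claim, however, the paper simply cites Sgall's theorem (Corollary 3.5 of \cite{Sgall}): if $|A\cap B|$ is constant over $\cA\times\cB$, then $|\cA||\cB|\leq 2^r$. You instead give a short self-contained linear-algebra proof: reduce mod $2$, recenter to $\cA-a_0$ and $\cB-b_0$ so that the translated families span $\bF_2$-orthogonal subspaces $V\perp W$, and conclude $|\cA||\cB|\leq 2^{\dim V+\dim W}\leq 2^r$ by non-degeneracy of the form. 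This is genuinely different in two respects: it avoids the external reference, and it actually establishes the stronger statement that the bound $2^r$ already follows from the weaker hypothesis that all cross-intersections have the \emph{same parity}, not the same exact size. The paper's appeal to Sgall is shorter and keeps the section lean; your route is more elementary and makes the mechanism visible. Both are fine here, and the recentering step you isolate (turning "constant intersection parity" into genuine $\bF_2$-orthogonality of the difference sets) is exactly the right move -- without it, the subspaces spanned by $\cA$ and $\cB$ directly need not be orthogonal.
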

\begin{proof}
The number of zero entries of the matrix $M_3$ equals to the number of pairs of sets $(A, B)$ with $|A\cap B|=r/4$ and $A, B\subseteq [r]$. The number of such pairs is $\binom{r}{r/4}3^{3r/4}$, since there are $\binom{r}{r/4}$ ways to choose the intersection $A\cap B$, and for each element outside the intersection there are three choices, for this element can belong to $A$, $B$ or neither of the sets. A simple calculation using the Stirling approximation, which states that $k!=\Theta\big(\sqrt{k}\big(\frac{k}{e}\big)^k\big)$, gives the following
\[\binom{r}{r/4}3^{3r/4}=\Theta\left(\frac{\sqrt{r}\big(\frac{r}{e}\big)^r}{\sqrt{r}\big(\frac{r}{4e}\big)^{r/4}\cdot \sqrt{r}\big(\frac{3r}{4e}\big)^{3r/4}}3^{3r/4}\right)=\Theta\left(\frac{1}{\sqrt{r}\big(\frac{1}{4}\big)^{r/4}\cdot \big(\frac{3}{4}\big)^{3r/4}}3^{3r/4}\right)=\Theta\left(\frac{4^r}{\sqrt{r}}\right).\]
Hence, $M_3$ has $\Theta(n^2/\sqrt{r})$ zero entries, as claimed.

Furthermore, an all-zero submatrix of size $s\times t$ corresponds to a collection of $s+t$ sets $A_1, \dots, A_s, B_1, \dots, B_t\in  2^{[r]}$ such that $|A_i\cap B_j|=r/4$ for all $i, j$. A theorem of Sgall \cite{Sgall} states that if two families $\cA,\cB\subseteq 2^{[r]}$ have the property that $|A\cap B|$ is the same for all $A\in \cA, B\in \cB$, then  $|\cA||\cB|\leq 2^{r}$ (see Corollary 3.5 of \cite{Sgall}). This finishes the proof.
\end{proof}

\section{Open problems}\label{sect:open_problems}

In Section~\ref{sect:daykin-erdos}, we proposed to study a variant of the Daykin-Erd\H os problem for families with quadratically many pairs of sets with intersection $\lambda\neq 0$. We showed that two set families $\cA, \cB\subseteq 2^{[n]}$ of size $m$ with $cm^2$ disjoint pairs can have size at most $m\leq 2^{(1+o(1))n/2}$. However, our arguments only work when the density $c$ is a constant. Therefore, it would be interesting to understand the dependency of the maximal size of the families $\cA, \cB$ when the density $c$ is in the range $2^{-n}\leq c\leq 1$. 

\begin{problem}
Let $\cA, \cB\subseteq 2^{[n]}$ be two set families of size $m$ and let $\lambda\in [n]$, and $c\in [2^{-n}, 1]$ be parameters. Assuming there are at least $cm^2$ pairs of sets $(A, B)\in \cA\times \cB$ such that $|A\cap B|=\lambda$, what is the largest possible size of $|\cA| |\cB|$, as a function of $c$ and $n$?
\end{problem}

Furthermore, in Section~\ref{sect:singer-sudan} we showed that an intersection matrix of two set systems with many disjoint pairs must contain a large all-zero submatrix. It is natural to ask whether the same holds for arbitrary nonzero intersections as well.

\begin{problem}\label{prob:intermediate}
Let $\cA, \cB\subseteq 2^{[n]}$ be two set systems of size $m$ and at least $(1-\eps) m^2$ pairs with intersection $\lambda$. Do there exist subfamilies $\cA'\subseteq \cA$ and $\cB'\subseteq \cB$, of size $|\cA'||\cB'|\geq 2^{-O(\sqrt{n\eps})}|\cA||\cB|$ such that each pair $(A', B')\in \cA'\times \cB'$ has intersection $\lambda$?
\end{problem}

Note that Problem~\ref{prob:intermediate} is still a special case of Conjecture~\ref{conj:lovett}. Namely, if $M$ is the intersection matrix of set systems $\cA, \cB$, then the matrix $M-\lambda J$ contains $(1-\eps)m^2$ zero entries and has rank at most $n+1$, so it should contain a large all-zero submatrix. Nevertheless, we believe that it may be easier to tackle Problem~\ref{prob:intermediate} than the case of general matrices, because intersection matrices of set systems are more structured than general low-rank matrices.

\bigskip
\noindent
\textbf{Acknowledgement.} We would like to especially thank Matthew Kwan and Lisa Sauermann for sharing their elegant proof of Lemma~\ref{lemma:cut} with us.

\newpage

\appendix
\section{Appendix}

In this appendix, we give a proof of our final result mentioned in the Introduction, namely that matrices with small integer entries also have large monochromatic rectangles. The motivation for this theorem, as for many others in this paper, is provided by the log-rank conjecture. Recall that the recent work of the last two authors shows that $m\times n$ binary matrices of rank $r$ contain constant submatrices of size $2^{-O(\sqrt{r})}m\times 2^{-O(\sqrt{r})}n$. However, it seems that the assumption that the matrix is binary is not crucial - the same holds for matrices with entries $\{0, \dots, t\}$, where $t$ is some constant.
More precisely, our goal is prove Theorem~\ref{thm:few_distinct_values}, which states that a rank $r$ matrix $M\in\mathbb{N}^{m\times n}$ with $p(M)\leq t$ contains a constant submatrix of size $2^{-O(t\sqrt{r})}m\times 2^{-O(t\sqrt{r})}n$.

\medskip

Before we discuss the proof of Theorem~\ref{thm:few_distinct_values}, we mention that this problem changes significantly depending on the relationship between $t$ and $r$. For example, if $t\gg r^{1+c}$ one can understand the size of largest constant rectangles in matrices with only $t$ distinct entries using results about point-hyperplane incidences. Namely, if $M$ is an $m\times n$ rank $r$ matrix with $t$ distinct entries, then there is $\lambda$ such that $M-\lambda J$ contains at least $mn/t$ zero entries. The result of Singer and Sudan \cite{SS22} discussed in the Introduction states that a rank $r$ matrix with $\delta mn$ zero entries contains an all-zero submatrix with at least $\Omega(\delta^{2r}mn/r)$ entries, which in our case means that $M$ contains a constant-$\lambda$ submatrix of with at least $\Omega((1/t)^{2r} mn/r)=2^{-O(r\log t)}mn$ entries since $\delta=1/t$ (see also \cite{MST} for somewhat sharper bounds in case $t$ is large). In case $t\gg r^{1+c}$ for some constant $c>0$, this bound is sharp as the following construction demonstrates. 

\medskip
\noindent
\textbf{Construction 4.} Let $k$ be an integer and let $n=(2k+1)^r$. Let $v_1,\dots,v_{n}$ be an enumeration of the vectors in $\{-k,\dots,k\}^r$, and let $M_4$ be the $n\times n$ matrix defined as $M_4(i,j)=\langle v_i,v_j\rangle$. 
\medskip

Then $\rank(M_4)\leq r$, and we show in the next lemma that $M_4$ contains no constant square submatrix of size larger than $(2k+1)^{r/2}$. In case $t\gg r^{1+c}$, choosing  $k = \frac{1}{2}\sqrt{t/r}\gg t^{c/2(1+c)}$, one has $(2k+1)^{r/2}=2^{-\Omega(r\log t)}n$. Also, if $k = \frac{1}{2}\sqrt{t/r}$, there are at most $2 k^2 r+1<t$ distinct entries in the matrix $M_4$.

\begin{lemma}
The matrix $M_4$ contains no constant square submatrix of size larger that $(2k+1)^{r/2}$.
\end{lemma}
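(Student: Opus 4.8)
The plan is to translate the statement into a question about two families of lattice points with a constant inner product. A constant square submatrix of $M_4$ of size $s\times s$ is, by definition, a choice of $s$ distinct rows and $s$ distinct columns together with a scalar $\lambda$ such that all the corresponding entries equal $\lambda$; equivalently, it is a pair of $s$-element sets $\mathcal{A},\mathcal{B}\subseteq\{-k,\dots,k\}^r$ with $\langle a,b\rangle=\lambda$ for all $a\in\mathcal{A}$, $b\in\mathcal{B}$. Since $n=(2k+1)^r$, it therefore suffices to show that any such $\mathcal{A},\mathcal{B}$ satisfy $|\mathcal{A}|\,|\mathcal{B}|\le (2k+1)^r$: this gives $s^2\le (2k+1)^r$, i.e. $s\le (2k+1)^{r/2}$.

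To prove this bilinear bound, I would fix some $b_0\in\mathcal{B}$ and pass to differences. Let $W=\spn\{\,b-b_0 : b\in\mathcal{B}\,\}\subseteq\mathbb{R}^r$ and $w=\dim W$. For every $a\in\mathcal{A}$ and $b\in\mathcal{B}$ we have $\langle a,b-b_0\rangle=\lambda-\lambda=0$, so $\mathcal{A}\subseteq W^{\perp}$, a linear subspace of dimension $r-w$, while trivially $\mathcal{B}\subseteq b_0+W$, an affine subspace of dimension $w$. Thus $\mathcal{A}$ consists of grid points inside an $(r-w)$-dimensional subspace and $\mathcal{B}$ of grid points inside a $w$-dimensional affine subspace.

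The last ingredient is the elementary fact that any affine subspace $V\subseteq\mathbb{R}^r$ of dimension $d$ contains at most $(2k+1)^d$ points of $\{-k,\dots,k\}^r$. To see this, write $V=p+U$ with $\dim U=d$, and choose a set $S$ of $d$ coordinates such that the coordinate projection $\pi_S$ is injective on $U$ — possible because some $d\times d$ minor of any spanning matrix of $U$ is invertible. Then $\pi_S$ is injective on the affine translate $V$ as well, and it maps the grid into $\{-k,\dots,k\}^d$, which has $(2k+1)^d$ elements. Applying this to the subspace containing $\mathcal{A}$ and the affine subspace containing $\mathcal{B}$ yields $|\mathcal{A}|\le (2k+1)^{r-w}$ and $|\mathcal{B}|\le (2k+1)^{w}$, hence $|\mathcal{A}|\,|\mathcal{B}|\le (2k+1)^r$, completing the argument.

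I do not expect a serious obstacle here; the only place that needs a moment of care is the coordinate-projection lemma, i.e. choosing the $d$ coordinates so that injectivity is preserved both on the linear part and on its intersection with the grid, which is routine linear algebra. An alternative would be to cite a version of Sgall's inequality used elsewhere in the paper, but since that is phrased for $\{0,1\}$-vectors and here the entries range over $\{-k,\dots,k\}$, the direct dimension-counting argument above is the cleanest route.
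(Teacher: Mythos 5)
Your proof is correct and takes essentially the same approach as the paper: both arguments observe that the rows and columns of a constant submatrix span orthogonal affine subspaces whose dimensions sum to at most $r$, and both count grid points in a $d$-dimensional affine subspace as at most $(2k+1)^d$ via a coordinate projection (invertible minor / echelon form). Your presentation, fixing $b_0$ and working with $W$ and $W^\perp$ directly, yields the clean bilinear bound $|\mathcal{A}|\,|\mathcal{B}|\le(2k+1)^r$ in one stroke, whereas the paper takes affine spans of both sides and picks the smaller-dimensional one; this is a cosmetic difference, not a different route.
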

\begin{proof}
Let $R$ and $C$ be sets of rows and columns such that $M[R\times C]$ is a constant submatrix. Let $U$ be the affine span of $\{v_i:i \in R\}$ and $V$ be the affine span of $\{v_j: j\in C\}$. Then, $U$ and $V$ are orthogonal affine subspaces of $\mathbb{R}^r$ (we say that two affine spaces are orthogonal if translating them to the origin gives a pair of orthogonal linear spaces. Hence, we have $\dim(U)+\dim(V)\leq r$, showing that one of them, say $U$, has dimension at most $\dim(U)=d\leq r/2$. Next, we argue that a space of dimension $d$ cannot intersect the grid $\{-k, \dots, k\}^r$ in more than $(2k+1)^d$ points. Let $f_1, \dots, f_d$ be a basis of $U$ in reduced-row echelon form, i.e. there is a set $I\subset [d]$ such that $f_1,\dots,f_d$ are the $d$ standard basis vectors when restricted to the coordinates in $I$. Such a basis always exists by the Gauss-Jordan elimination. Looking at the coordinates in $I$, it is clear that the only linear combinations $\lambda_1f_1+\dots+\lambda_df_d$ which can lie in the grid $\{-k, \dots, k\}^r$ are those where $\lambda_1, \dots, \lambda_d\in \{-k, \dots, k\}$. Thus, there are at most $(2k+1)^d$ linear combinations that fall into the grid $\{-k,\dots,k\}^{r}$, so $|U\cap\{-k,\dots,k\}^{r}|\leq (2k+1)^d$. Hence, the largest constant square submatrix of $M$ has size at most $(2k+1)^{r/2}$.
\end{proof}

Let us now turn our attention to Theorem~\ref{thm:few_distinct_values}, in which we show that when $t\ll \sqrt{r}$, the upper bounds mentioned above can be greatly improved. We now restate this theorem for the reader's convenience.

\begin{theorem}\label{thm:few_distinct_values_appendix}
Let $M$ be an $m\times n$ matrix of rank $r$ with nonnegative integer entries. If  $p(M)\leq t$ for some $t\geq 1$, then $M$ contains a constant submatrix of size at least $2^{-O(t\sqrt{r})}m \times 2^{-O(t\sqrt{r})} n.$
\end{theorem}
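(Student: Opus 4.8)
The plan is to run the iterative discrepancy argument behind Theorem~\ref{thm:sparse_matrix}, adapted to the two new features: the monochromatic submatrix we produce need not be all-zero (it will be constant at the rounded average of whatever submatrix we stop at), and the average may be as large as $t$ rather than $1/2$. First I would normalise using Lemma~\ref{lemma:low_average}(1): pass to a submatrix $M_0$ of size at least $0.9 m \times 0.9 n$ all of whose entries lie in $[0,\Delta]$ with $\Delta = O(r^2 t)$, so that $p(M_0) = O(t)$ and the variance of every later submatrix stays below $\Delta^2$. Then I would build a chain $M_0 \supseteq M_1 \supseteq \cdots$ of half-sized submatrices: writing $p_i = p(M_i)$, $q_i = q(M_i)$, $v_i = \operatorname{round}(p_i)$, I stop as soon as $q_i \le \tfrac{1}{64(r+1)}$ --- since every entry $M_i(a,b)\neq v_i$ contributes at least $\tfrac14$ to $q_i|M_i|$, the matrix $M_i - v_i J$ has rank at most $r+1$ and fewer than $|M_i|/(16(r+1))$ nonzero entries, so Lemma~\ref{lemma:low_average}(2) hands back a constant-$v_i$ submatrix of half the dimensions --- or as soon as $p_i < \tfrac{1}{16r}$, in which case the same lemma applied to $M_i$ itself gives an all-zero submatrix. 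If neither stopping condition holds, I perform one step of the Lemma~\ref{lemma:two cases} dichotomy: with an appropriately chosen threshold (of order $\sqrt{q_i/r}$ or $\sqrt{p_i/r}$ up to an absolute constant), either $\disc(M_i)$ exceeds the threshold, and Claim~\ref{claim:half} produces a half-sized $M_{i+1}$ whose average has dropped, or $\disc(M_i)$ is below the threshold, and Lemma~\ref{lemma:disc} together with Claim~\ref{claim:half} produces a half-sized $M_{i+1}$ whose variance has dropped while its average has barely moved. If the process terminates after $T$ steps, the constant submatrix it returns has size $2^{-T - O(1)} m \times 2^{-T - O(1)} n$, so the whole theorem reduces to the bound $T = O(t\sqrt r)$.

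That bound is the crux, and it is where the argument gets ``more technical'' than Theorem~\ref{thm:sparse_matrix}. In the separated, average-$\le\tfrac12$ regime one has $q \gtrsim p$ (Lemma~\ref{lemma:large_F}); this is exactly what lets the ``$\disc$ small'' case force a large (factor $2^{-100}$) drop in the variance, which in turn lets the potential $\sqrt{rp_i}+\Theta(\log(\Delta q_i))$ decrease by an absolute constant at every step. Once $p_i$ can exceed $1$ this breaks down: a matrix that is almost constant at an integer value has $q \approx 0 \ll p$, so the ``$\disc$ small'' step no longer makes $q$ collapse, and moreover it may let $p_i$ rise by as much as $\Theta(\sqrt{q_i/r})$, which can dwarf $p_i$ itself; at the same time a ``$\disc$ large'' step can still inflate $q$ by a factor $4$. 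A naive potential (linear in $p$, or $\sqrt{rp}$, with constant coefficients) simply blows up under these moves. I expect the resolution is to split off the regime $p_i = O(1)$ --- where the analysis of Theorem~\ref{thm:sparse_matrix} carries over essentially verbatim and contributes only $2^{-O(\sqrt r)}$ --- and, in the large-$p$ regime, to use a carefully engineered two-parameter potential in $p_i$ and $q_i$ (tracking something like the relative variance $q_i/p_i$ alongside $\sqrt{rp_i}$), tuned so that the geometric decay of the variance forced by the ``$\disc$ small'' steps exactly pays for the bounded growth the ``$\disc$ large'' steps may cause. Getting these constants to cooperate is the main obstacle.

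Finally, a careless version of this iteration only gives $T = O(rt)$ --- when the variance hovers near the stopping level $\asymp 1/r$, each average-decreasing step gains essentially nothing, since the gain of the $\sqrt{rp_i}$ term is only of order $\sqrt{q_i/p_i}$ --- so squeezing it down to $T = O(t\sqrt r)$ is precisely what the engineered potential must accomplish. For constant $t$ this recovers the current best bound $2^{-O(\sqrt r)}$ for the log-rank conjecture, and for larger $t$ the nonnegative analogue of Construction~4 (points $v_i\in\{0,\dots,k\}^r$ with $M(i,j)=\langle v_i,v_j\rangle$) shows one cannot do better than $2^{-\Omega(\sqrt{rt})}$, so the exponent $t\sqrt r$ is what falls out of the method rather than what is optimal.
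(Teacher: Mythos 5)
Your setup is sound and you have correctly located the central obstruction: once $p(M)$ can sit near an integer $\ell\geq 1$, the variance lower bound $q\gtrsim p$ of Lemma~\ref{lemma:large_F} fails, so the ``small discrepancy'' branch no longer forces a geometric collapse of $q$, and a single potential of the $\sqrt{rp}+\Theta(\log(\Delta q))$ type cannot certify $T=O(t\sqrt r)$. Your back-of-envelope $T=O(rt)$ for the careless iteration is also right. But the proposed resolution is a genuine gap, and two parts of the sketch as written are actually incorrect.

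First, the claim that ``in the regime $p_i=O(1)$ the analysis of Theorem~\ref{thm:sparse_matrix} carries over essentially verbatim'' is false: Lemma~\ref{lemma:large_F} requires $p\leq 0.9$, and the general bound of Lemma~\ref{lemma:large_F_appendix} is only $q\gtrsim p(1-p)$, which degenerates as $p\to 1$. So already at $p=0.99$ the argument of Theorem~\ref{thm:sparse_matrix} breaks, well before $p$ leaves $O(1)$. Second, the suggested two-parameter potential ``tracking something like the relative variance $q_i/p_i$ alongside $\sqrt{rp_i}$'' is not an argument, and I do not see how to make it one: near an integer level Lemma~\ref{lemma:small_variance} only guarantees $q\gtrsim 1/r$, so a small-discrepancy step moves $p$ by $\pm\Theta(\sqrt{q/r})=\pm\Theta(1/r)$, and no potential that is built from $\sqrt{rp}$ and a logarithm of $q$ loses an absolute constant per such step when $p$ is far from $0$. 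You acknowledge this (``getting these constants to cooperate is the main obstacle'') but do not supply the missing idea.

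The paper's actual fix is a structural reduction you do not find: cross the integer levels one at a time. The key intermediate claim is that if $p(M)\in[\ell-\frac{2}{\sqrt r},\,\ell+1-\frac{2}{\sqrt r})$ for an integer $\ell$, then at dimensional cost $2^{O(\sqrt r)}$ one either exhibits a constant submatrix or passes to a submatrix with average in $[\ell-1-\frac{2}{\sqrt r},\,\ell-\frac{2}{\sqrt r})$; iterating this at most $t$ times (the average can never go negative) proves the theorem. Each crossing uses two distinct phases with two different potentials. Phase A applies Proposition~\ref{prop:iteration} to the separated matrix $M-\ell J$, with the potential $\sqrt{rp}+\sqrt r-\sqrt{r(1-p)}$ whose derivative blows up like $\sqrt{r/(1-p)}$; this handles the $p\to 1$ endpoint your sketch cannot, and drives $p-\ell$ down to $\leq\frac{1}{16r}$ in $O(\sqrt r)$ half-sizing steps. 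Phase B then invokes the $\Theta(1/r)$-step Lemma~\ref{lemma:two cases variant appendix} with the \emph{linear} potential $rp+\frac{c}{10}\log_2(\Delta q)$ to push $p-\ell$ a further $\Theta(1/\sqrt r)$ below zero. Crucially, the linear potential only needs to lose $\Theta(\sqrt r)$, not $\Theta(r)$, so Phase B is also only $O(\sqrt r)$ steps. It is this split of each crossing into a fast phase (big steps, $\sqrt{\cdot}$-shaped potential) and a short slow phase ($\Theta(1/r)$ steps, linear potential, but only a $\Theta(1/\sqrt r)$ distance to travel) that defeats the naive $O(rt)$ count. A single unified potential cannot reproduce this because the two phases genuinely require different functional forms; the structural ``cross one integer at a time, with a $2/\sqrt r$ buffer on each side'' idea is what makes the accounting close, and it is the piece missing from your proposal.
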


The basic idea of the proof is similar to the one we encountered in Section~\ref{sect:low-rank matrices} - we show that matrices in question have high discrepancy and we use this to decrease the average entry until a greedy argument can be used to find an all-zero submatrix. However, the major new challenge which we did not encounter in the proof of Theorem~\ref{thm:sparse_matrix} is to lower-bound the discrepancy of matrices whose average value may be close to an integer. In contrast, in the proof of Theorem~\ref{thm:sparse_matrix}, it was of crucial importance that the average entry of the matrix was bounded away from $1$ in order to establish good lower bounds on variance and discrepancy. 

Therefore, we begin by generalizing several lemmas from Section~\ref{sect:low-rank matrices} to matrices whose average entry may not be bounded away from $1$. The following lemma extends Lemma~\ref{lemma:large_F}.

\begin{lemma}\label{lemma:large_F_appendix}
Let $M$ be a separated $m\times n$ matrix. Then $q(M)\geq {p(M)(1-p(M))}/{100}$.
\end{lemma}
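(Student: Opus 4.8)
The plan is to establish the slightly stronger inequality $q(M)\ge p(M)\bigl(1-p(M)\bigr)$; the lemma then follows at once, and in fact trivially when $p(M)\notin(0,1)$, since for such $p(M)$ the right-hand side of the lemma is non-positive whereas $q(M)\ge 0$ always holds. Write $p=p(M)$. Expanding the square in the definition of the variance and using $\frac1{mn}\sum_{i,j}M(i,j)=p$ gives the identity $q(M)=\frac1{mn}\sum_{i,j}M(i,j)^2-p^2$, so it suffices to prove that $\frac1{mn}\sum_{i,j}M(i,j)^2\ge p$.

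For this I would invoke the hypothesis that $M$ is separated, i.e. every entry is either $\ge 1$ or $\le 0$. The squares of the non-positive entries are non-negative, so discarding them and then applying $x^2\ge x$ for every $x\ge 1$ yields $\frac1{mn}\sum_{i,j}M(i,j)^2\ge \frac1{mn}\sum_{M(i,j)\ge 1}M(i,j)$. Finally, since $\sum_{M(i,j)\le 0}M(i,j)\le 0$, deleting those entries from the total sum $\sum_{i,j}M(i,j)=pmn$ only increases it, so $\frac1{mn}\sum_{M(i,j)\ge 1}M(i,j)\ge p$. Chaining the two bounds gives $\frac1{mn}\sum_{i,j}M(i,j)^2\ge p$ and hence $q(M)\ge p-p^2$, as desired.

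I do not expect any real obstacle here; this argument is a cleaner substitute for the proof of Lemma~\ref{lemma:large_F}, which relied on the auxiliary function $f(x)=(x-p)^2/x$ together with the assumption $p\le 0.9$. The one point to keep in mind is that in the present setting $p$ is no longer bounded away from $1$, so one cannot afford to lose a factor such as $(1-p)^2$; the bound $q(M)\ge p(1-p)$ has exactly the right behaviour as $p\to 1$ (it degenerates to $0$, matching the fact that $M$ is then close to the all-ones matrix) and is comfortably stronger than the claimed $p(1-p)/100$.
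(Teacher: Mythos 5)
Your proof is correct, and it is in fact cleaner and stronger than the one in the paper. The paper proves the lemma by a case split on $p(M)\le 0.9$ versus $p(M)>0.9$, handling the first case by invoking Lemma~\ref{lemma:large_F} (whose proof uses the auxiliary function $f(x)=(x-p)^2/x$ together with the assumption $p\le 0.9$), and handling the second case by passing to the complementary matrix $J-M$, which is again separated and has small average, so the first case applies. Your argument instead uses the variance identity $q(M)=\frac1{mn}\sum_{i,j}M(i,j)^2-p^2$ directly and observes that for a separated matrix every entry satisfies $x^2\ge x$ (trivially for $x\le 0$ and since $x^2\ge x$ for $x\ge 1$), giving $\frac1{mn}\sum M(i,j)^2\ge p$ and hence $q(M)\ge p(1-p)$ with no case split, no appeal to the earlier lemma, and no loss of a constant factor of $100$. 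In particular your bound $q(M)\ge p(1-p)$ is strictly stronger than the stated $q(M)\ge p(1-p)/100$, and, as you note, it is also trivially valid when $p\notin(0,1)$. (One small remark: you don't even need to discard the non-positive entries separately --- the pointwise inequality $M(i,j)^2\ge M(i,j)$ holds for every entry of a separated matrix, so you can sum it directly over all $(i,j)$.) This would be a worthwhile simplification, since it also renders Lemma~\ref{lemma:large_F} redundant for the appendix argument.
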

\begin{proof}
Note that the statement is only nontrivial when $p(M)\in (0, 1)$, since otherwise the right-hand side is negative, while $q(M)\geq 0$ always holds. Furthermore, recall that we proved in Lemma~\ref{lemma:large_F} that a separated $m\times n$ matrix $M$ with $p(M)\in (0, 0.9)$ satisfies $q(M)\geq p(M)/100\geq {p(M)(1-p(M))}/{100}$, so in this case we are done. On the other hand, if $p(M)\in (0.9, 1)$, then $p(J-M)=1-p(M)\in (0, 0.1)$ and therefore we have $q(M)=q(J-M)\geq (1-p(M))/100\geq p(M)(1-p(M))/100$.
\end{proof}

The following result is a slight variation of Lemma~\ref{lemma:two cases} and, as in Section~\ref{sect:low-rank matrices}, this lemma  represents the engine behind the iteration which decreases the average entry of a matrix. The proof is essentially the same as proof of Lemma~\ref{lemma:two cases}, the only changes being that $2^{-100}$ is now $2^{-200}$ and we use Lemma~\ref{lemma:large_F_appendix} to lower-bound the variance of $M$ instead of Lemma~\ref{lemma:large_F}. Therefore, we omit the proof.

\begin{lemma}\label{lemma:two cases appendix}
There exists an absolute constant $c\in (0, 10^{-4})$ such that for every $m\times n$ separated matrix $M$ of rank $r$, we have the following. Let $p=p(M)\in (0,1)$, then there exists a submatrix $M'\subseteq M$ of size $m/2\times n/2$ such that one of the two alternatives hold
\begin{itemize}
    \item[(1)] $p(M')\leq p-c \sqrt{\frac{p(1-p)}{r}}$ and $q(M')\leq 4q(M)$, or
    \item[(2)] $p(M')\leq p+12c\sqrt{\frac{p(1-p)}{r}}$ and $q(M')\leq 2^{-200} q(M)$.
\end{itemize}
\end{lemma}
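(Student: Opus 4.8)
The plan is to follow the proof of Lemma~\ref{lemma:two cases} almost verbatim, with $p$ replaced by $p(1-p)$ inside every square root, the constant $2^{-100}$ replaced by $2^{-200}$, and the variance estimate of Lemma~\ref{lemma:large_F} replaced by the two-sided estimate of Lemma~\ref{lemma:large_F_appendix}. Set $\alpha=2^{-200}$ and $c=\min\{\alpha c_0/30,\ 10^{-4}\}$, where $c_0$ is the absolute constant furnished by Lemma~\ref{lemma:disc}. The argument then splits into two cases according to whether $\disc(M)$ is at least or at most the threshold $3c\sqrt{p(1-p)/r}\,mn$.

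In the first case, $\disc(M)\ge 3c\sqrt{p(1-p)/r}\,mn$, I would produce outcome (1). By the first part of Claim~\ref{claim:half} there is an $m/2\times n/2$ submatrix $M'$ with $p(M')\le p(M)-\disc(M)/(3mn)\le p-c\sqrt{p(1-p)/r}$; and for \emph{any} half-sized submatrix $M'$ one has $q(M')\le \|M'-pJ\|_F^2\big/\big(\tfrac m2\cdot\tfrac n2\big)\le 4\|M-pJ\|_F^2/(mn)=4q(M)$ by Claim~\ref{claim:variance}. Nothing in this step uses any hypothesis on $p$, so it carries over unchanged.

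In the second case, $\disc(M)\le 3c\sqrt{p(1-p)/r}\,mn$, I would produce outcome (2). Lemma~\ref{lemma:disc} gives a half-sized submatrix $M'$ with $\disc(M)\ge c_0 mn\, q(M')/\sqrt{r\,q(M)}$. The only genuinely new point is the lower bound on $q(M)$: since $p$ is now allowed to lie anywhere in $(0,1)$ — in particular near the integer $1$ — the estimate $q(M)\ge p/100$ of Lemma~\ref{lemma:large_F} is no longer valid, and I instead invoke Lemma~\ref{lemma:large_F_appendix} to get $q(M)\ge p(1-p)/100$. Chaining the inequalities,
\[
3c\,mn\sqrt{\frac{p(1-p)}{r}}\ \ge\ \disc(M)\ \ge\ c_0 mn\,\frac{q(M')}{\sqrt{r\,q(M)}}\ =\ c_0 mn\sqrt{\frac{q(M)}{r}}\cdot\frac{q(M')}{q(M)}\ \ge\ \frac{c_0}{10}\,mn\sqrt{\frac{p(1-p)}{r}}\cdot\frac{q(M')}{q(M)},
\]
and cancelling $mn\sqrt{p(1-p)/r}$ while using $c\le \alpha c_0/30$ yields $q(M')\le \alpha q(M)=2^{-200}q(M)$. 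Finally, the second part of Claim~\ref{claim:half} gives $p(M')-p(M)\le 4\disc(M)/(mn)\le 12c\sqrt{p(1-p)/r}$, completing outcome (2).

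Since every cited ingredient (Claims~\ref{claim:half} and~\ref{claim:variance}, Lemmas~\ref{lemma:disc} and~\ref{lemma:large_F_appendix}) holds with no restriction on $p$, there is essentially no obstacle here beyond bookkeeping: the proof of Lemma~\ref{lemma:two cases} goes through once the variance bound is upgraded. The place that requires care — and the reason Lemma~\ref{lemma:large_F_appendix} was proved in the first place — is precisely that without a two-sided bound $q(M)\gtrsim p(1-p)$ the discrepancy-to-variance conversion collapses as $p\to 1$, which is exactly the regime one enters when iterating on a matrix with integer entries whose average is moderate.
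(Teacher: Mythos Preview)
Your proposal is correct and matches the paper's approach exactly: the paper itself omits the proof, stating only that it is the same as that of Lemma~\ref{lemma:two cases} with $2^{-100}$ replaced by $2^{-200}$ and Lemma~\ref{lemma:large_F_appendix} used in place of Lemma~\ref{lemma:large_F}. Your write-up carries out precisely these substitutions and correctly identifies the one substantive point, namely that the variance lower bound must become $q(M)\ge p(1-p)/100$ to remain valid for all $p\in(0,1)$.
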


The following proposition generalizes the idea of the proof of Theorem~\ref{thm:sparse_matrix}.

\begin{proposition}\label{prop:iteration}
Let $M\in\mathbb{R}^{m\times n}$ be a rank $r$ separated matrix with entries in $(-r^3, r^3)$. If $p(M)=p \leq 1-\frac{2}{\sqrt{r}}$, then $M$ contains a submatrix $M'\subseteq M$ of size $m2^{-O(\sqrt{r})}\times n2^{-O(\sqrt{r})}$ satisfying $p(M')\leq \frac{1}{16r}$.
\end{proposition}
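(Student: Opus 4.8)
The plan is to run the iteration familiar from the proof of Theorem~\ref{thm:average}. Starting from $M_0=M$, we repeatedly apply Lemma~\ref{lemma:two cases appendix} to pass to a half-sized submatrix, choosing $M_{i+1}\subseteq M_i$ to realize one of the two alternatives of that lemma, and we stop at the first index $t$ with $p(M_t)\le\tfrac{1}{16r}$, outputting $M'=M_t$. Every submatrix of $M$ is again separated, of rank at most $r$, with entries in $(-r^3,r^3)$ and hence variance $q(\cdot)<4r^6$; and whenever $i<t$ we have $p(M_i)>\tfrac{1}{16r}>0$, so Lemma~\ref{lemma:two cases appendix} applies to $M_i$ as long as $p(M_i)<1$ (which we verify below). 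Since $M_t$ has size $2^{-t}m\times 2^{-t}n$, it suffices to show $t=O(\sqrt r)$; we may assume $r$ is large.

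To bound $t$ we use a potential that changes by at most an absolute constant per step. The difficulty absent from Theorem~\ref{thm:average} is that $p$ is only assumed to satisfy $p\le 1-\tfrac{2}{\sqrt r}$, so in the regime $p>1/2$ the decrement $c\sqrt{p(1-p)/r}$ in alternative~(1) is too small for $\sqrt{rp}$ to drop by a constant. The remedy is to measure progress by the distance of $p$ to the \emph{nearer} endpoint of $[0,1]$: set
\[\phi(p)=\begin{cases}\sqrt{rp},& 0\le p\le 1/2,\\[1mm] \sqrt{2r}-\sqrt{r(1-p)},& 1/2\le p\le 1,\end{cases}\]
which is continuous, increasing, and satisfies $0<\phi(p)\le\sqrt{2r}$; and take $\Phi_i=\phi(p_i)+\tfrac{c}{15}\log_2\big(4000\,r\,q(M_i)\big)$, where $p_i=p(M_i)$ and $q_i=q(M_i)$. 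One checks, separately for $p_i\le1/2$, for $p_i\ge1/2$, and at the two crossovers (using only the elementary estimate that $\phi(p\pm\Delta)$ differs from $\phi(p)$ by $O(c)$ when $\Delta\le 12c\sqrt{p(1-p)/r}$, proved by bounding the relevant square roots), that alternative~(1) decreases $\phi$ by $\Omega(c)$ while at most quadrupling the variance, whereas alternative~(2) increases $\phi$ by $O(c)$ but divides the variance by $2^{200}$; in both cases $\Phi_i$ drops by at least an absolute constant times $c$. Here Lemma~\ref{lemma:large_F_appendix} is essential: it gives $q_i\ge p_i(1-p_i)/100\ge\tfrac{1}{4000r}$ as long as $p_i$ stays in $[\tfrac{1}{16r},1-\tfrac{1}{16r}]$, which keeps the logarithm nonnegative, hence $\Phi_i\ge0$; together with $\Phi_0=O(\sqrt r)$ (from $\phi\le\sqrt{2r}$ and $q_0<4r^6$) this forces the process to halt after $t=O(\sqrt r)$ steps.

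The remaining — and, I expect, the main — obstacle is to show that $p$ never escapes toward $1$, say that $p_i\le 1-\tfrac{1}{\sqrt r}$ for all $i<t$; this legitimizes both the applicability of Lemma~\ref{lemma:two cases appendix} and the bound $\Phi_i\ge0$ used above. The mechanism is a clash between two linear inequalities. On one hand, in the band $p\in[1-\tfrac{2}{\sqrt r},1-\tfrac{1}{\sqrt r}]$ we have $\sqrt{p(1-p)/r}=\Theta(r^{-3/4})$, and each step moves $p$ up by at most $12c\sqrt{p(1-p)/r}$ (alternative~(2)) or down by at least $c\sqrt{p(1-p)/r}$ (alternative~(1)); so traversing the band upwards forces the numbers $n_2,n_1$ of type-(2) and type-(1) steps taken inside it to satisfy $n_2\ge\tfrac{n_1}{24}+\Omega(r^{1/4}/c)$. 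On the other hand, each type-(2) step divides $q$ by $2^{200}$ and each type-(1) step multiplies it by at most $4$, while Lemma~\ref{lemma:large_F_appendix} keeps $q\ge\tfrac{1}{400\sqrt r}$ throughout the band and $q<4r^6$ always; comparing these bounds gives $n_2\le\tfrac{n_1}{100}+O(\log r)$. For $r$ large these two inequalities are incompatible, a contradiction. Hence $p_i\le 1-\tfrac{1}{\sqrt r}$ throughout, the analysis of $\Phi$ goes through, $t=O(\sqrt r)$, and the output $M'=M_t$ has size at least $2^{-t}m\times 2^{-t}n=m\,2^{-O(\sqrt r)}\times n\,2^{-O(\sqrt r)}$ with $p(M')\le\tfrac{1}{16r}$, as claimed.
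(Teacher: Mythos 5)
Your proof follows the same iterative scheme and potential-function analysis as the paper's, so the overall route is similar, but it diverges in one notable place. Your piecewise potential $\phi$ is a mild variant of the paper's $f(p)=\sqrt{rp}+\sqrt r-\sqrt{r(1-p)}$; both have derivative $\Theta\big(\sqrt{r/(p(1-p))}\big)$, which is the only feature the step-by-step analysis actually uses, and indeed both satisfy $f(1-\tfrac{1}{\sqrt r})-f(1-\tfrac{2}{\sqrt r})=(\sqrt 2-1)\,r^{1/4}$. The real structural difference is how you rule out upward escape of $p$. The paper derives this \emph{from the same potential}: since $f$ (or $\phi$) jumps by $\Omega(r^{1/4})$ across the band $[1-\tfrac{2}{\sqrt r},\,1-\tfrac{1}{\sqrt r}]$, while the combined progress function $f+g$ only decreases and $g$ can absorb at most $O(\log r)$, reaching $p_s\ge 1-\tfrac{1}{\sqrt r}$ is immediately contradictory. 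You instead set up a separate combinatorial count of type-(1) versus type-(2) steps inside the band, playing the $p$-dynamics against the $q$-dynamics; the two linear inequalities you extract ($n_2\gtrsim n_1/24+\Omega(r^{1/4})$ from the movement of $p$, and $n_2\lesssim n_1/100+O(\log r)$ from the movement of $q$) do clash and the argument is valid. But it is extra machinery: with your own $\phi$ the potential-based contradiction already works, and one does not even need the $q$-bound $q\ge \Omega(1/\sqrt r)$ uniformly in the band (only at the hypothetical final step $p_s\ge 1-\tfrac{1}{\sqrt r}$, where it follows at once from Lemma~\ref{lemma:large_F_appendix}). You correctly identified escape-to-$1$ as the new obstacle not present in Theorem~\ref{thm:average}, and your resolution, while more elaborate than necessary, is sound.
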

\begin{proof}
We may assume that $r$ is sufficiently large.
Let $p_0=p(M)$ and $q_0=q(M)$, for which we have $q_0\leq r^6$ since all entries of the matrix are bounded by $r^3$ in absolute value.

We define a sequence of matrices $M_0, M_1, \dots M_s$ such that $M_{i+1}$ is a half-sized submatrix of $M_{i}$ for $i=0, 1, \dots, s$. As long as $p(M_i) \in (\frac{1}{16r}, 1-\frac{1}{\sqrt{r}})$, we apply Lemma~\ref{lemma:two cases appendix} to $M_i$, and choose $M_{i+1}$ to be a half-sized submatrix which satisfies one of the outcomes of the lemma. We stop the process at the first index $s$ for which $p_s\leq \frac{1}{16r}$ or $p_s\geq 1-\frac{1}{\sqrt{r}}$. 
To sum up, writing $p_i=p(M_i)$ and $q_i=q(M_i)$, as long as $p_i\in (\frac{1}{16r}, 1-\frac{1}{\sqrt{r}})$, we get one of the following two conclusions 
\begin{itemize}
    \item[(1)] $p_{i+1}\leq p_i-c\sqrt{\frac{p_i(1-p_i)}{r}}$ and $q_{i+1}\leq 4q_i$, or
    \item[(2)] $p_{i+1}\leq p_i+12c \sqrt{\frac{p_i(1-p_i)}{r}}$ and $q_{i+1}\leq 2^{-200}q_i$.
\end{itemize}
\noindent
Here, $c\in (0, 10^{-4})$ is the absolute constant coming from Lemma~\ref{lemma:two cases appendix}.

 The matrix $M_s$ has size $m_s\times n_s$, where $m_s\geq 2^{-s-1}m, n_s\geq 2^{-s-1}n$. Therefore, in order to prove the theorem, we need to show that the process stops after fewer than $s=O(\sqrt{r}+\log r)$ steps and $p_s\leq 1/(16 r)$. Note that $p_i\leq 1-\frac{1}{2\sqrt{r}}$ for all $i\leq s$. For $i\leq s-1$, this is obvious since $p_i\leq 1-\frac{1}{\sqrt{r}}$, but we also have $p_s\leq p_{s-1}+12c\sqrt{\frac{p_{s-1}(1-p_{s-1})}{r}}\leq 1-\frac{1}{\sqrt{r}}+\frac{1}{2\sqrt{r}}$, where we have used that $p_i(1-p_i)\leq 1$, $c\leq 10^{-4}$. Note that if we ever have $q_{i}\leq \frac{1}{10^4 r}$, then $p_i(1-p_i)\leq 100q_i\leq \frac{1}{32 r}$ by Lemma~\ref{lemma:large_F_appendix}. This implies $p_i\leq \frac{1}{16 r}$ or $p_i\geq 1-\frac{1}{16r}$. However, it cannot be $p_i\geq 1-\frac{1}{16r}$, since the process always stops at a point $p_i\leq 1-\frac{1}{2\sqrt{r}}<1-\frac{1}{16r}$. Hence, we may assume that  $q_i\geq \frac{1}{10^4 r}$ for every $i<s$, as otherwise we immediately get $p_i\leq \frac{1}{16r}$ as well. We write $\Delta=10^4r$.

In order to analyze our process, we define progress functions \[f(p)=\sqrt{r p}+\sqrt{r}-\sqrt{r (1-p)}\text{\ \ \ and \ \ }g(q)=\frac{c}{10}\log_2 (\Delta q).\]
First of all, for all $i<s$, we have $f(p_i), g(q_i)\geq 0$, since $p_i\in (0, 1)$ and $\Delta q_i\geq 1$. Also, $f(p_0)\leq O(\sqrt{r})$. Furthermore, $g(q_0)\leq O(\log r)$ since $\Delta q_0$ is at most polynomial in $r$. So, $f(p_0)+g(q_0)\leq O(\sqrt{r}+\log r)$.
Finally, the key property of the function $f$ is that at every step of the procedure it is decreased by an absolute constant.

\begin{claim}\label{claim:progress function}
For all $i\leq s-1$, \[f(p_{i+1})+g(q_{i+1})\leq f(p_{i})+ g(q_{i})-\frac{c}{20}.\]
\end{claim}
\begin{proof}
If $i\leq s-1$, we have either outcome $(1)$ or $(2)$ of Lemma~\ref{lemma:two cases appendix}. We now look at the behavior of $f(p)+g(q)$ in both of these cases. Note that $f(p)$ is increasing in $p$. If outcome $(1)$ occurs, we have $p_{i+1}\leq x$, where $x=p_i-c\sqrt{\frac{p_i(1-p_i)}{r}}$, and therefore by considering the Taylor expansion of $f$,
\begin{equation}\label{eqn:derivative bound 1}
    f(p_{i+1}) \leq f(x)\leq f(p_i)-|p_i-x|\min_{x\leq u\leq p_i}f'(u).
\end{equation}

We can compute the derivative $f'(u)=\frac{\sqrt{r}}{2\sqrt{u}}+\frac{\sqrt{r}}{2\sqrt{1-u}}\geq \frac{\sqrt{r}}{2\sqrt{2u(1-u)}}$, where we used that either $u\geq 1/2$ or $1-u\geq 1/2$. 
Note that for $p_i\leq 1-\frac{1}{\sqrt{r}}\leq 1-\frac{1}{r}$, we have $c\sqrt{\frac{p_i(1-p_i)}{r}}\leq \frac{1-p_i}{25}$, which can be proven by squaring the equation and observing $cp_i(1-p_i)\leq 10^{-4} (1-p_i)\leq \frac{(1-p_i)^2r}{25^2}$. Hence, $1-u\leq 1-x=1-p_i+c\sqrt{\frac{p_i(1-p_i)}{r}}\leq 2(1-p_i)$. 
It follow that for all $u\in [x, p_i]$, we have $f'(u)\geq \frac{\sqrt{r}}{2\sqrt{2u(1-u)}}\geq \frac{\sqrt{r}}{4\sqrt{p_i(1-p_i)}}$. Plugging this into (\ref{eqn:derivative bound 1}) gives us
\[f(p_{i+1}) \leq f(x)\leq f(p_i)-c\sqrt{\frac{p_i(1-p_i)}{r}}\cdot \frac{\sqrt{r}}{4\sqrt{p_i(1-p_i)}}=f(p_i)-\frac{c}{4}.\]
We also have $g(q_{i+1})\leq \frac{c}{10}\log_{2}(4\Delta q_i)=\frac{c}{10}\log_{2}(\Delta q_i)+\frac{c}{5}=g(q_i)+\frac{c}{5}$, thus showing that in case of outcome (1) we have
\[f(p_{i+1})+g(p_{i+1})\leq f(p_i)+g(q_i)-c/20.\]

In case of outcome $(2)$, we let $x=p_i+12c \sqrt{\frac{p_i(1-p_i)}{r}}$. Then, $f(p_{i+1})\leq f(x)\leq f(p_i)+|x-p_i|\max_{p_i\leq u\leq x} f'(u)$.
Similarly as before, we have $f'(u)= \frac{\sqrt{r}}{2\sqrt{u}}+\frac{\sqrt{r}}{2\sqrt{1-u}}\leq \frac{\sqrt{r}}{\sqrt{u(1-u)}}$. Since $12c\sqrt{\frac{p_i(1-p_i)}{r}}\leq \frac{12}{25}(1-p_i)\leq \frac{1-p_i}{2}$, for all $u\in [p_i, x]$ it holds that $1-u\geq 1-x\geq (1-p_i)/2$. Hence, $f'(u)\leq \frac{\sqrt{r}}{\sqrt{u(1-u)}}\leq \frac{\sqrt{2r}}{\sqrt{p_i(1-p_i)}}$ and so
\[f(p_{i+1}) \leq f(x) \leq f(p_i)+12c\sqrt{\frac{p_i(1-p_i)}{r}}\cdot \frac{\sqrt{2r}}{\sqrt{p_i(1-p_i)}}=f(p_i)+12\sqrt{2}c.\]
We also have $g(q_{i+1})\leq g(2^{-200}q_i)=g(q_i)-200\frac{c}{10}$, thus showing that \[f(p_{i+1})+g(p_{i+1})\leq f(p_i)+g(q_i)-20c+12\sqrt{2}c\leq f(p_i)+g(q_i)-c/20,\] since $12\sqrt{2}\leq 12\cdot 3/2=18$. This finishes the proof of Claim~\ref{claim:progress function}.
\end{proof}

The proof of Proposition~\ref{prop:iteration} now follows easily. First, we show that we cannot have $p_s\geq 1-\frac{1}{\sqrt{r}}$. If $p_s\geq 1-\frac{1}{\sqrt{r}}$, we also have $g(q_s)\geq 0$, and so 
\[f(p_s)\leq f(p_s)+g(q_s)\leq f(p_0)+g(q_0)\leq f(p_0)+\frac{c}{10}\log(\Delta \cdot r^6).\]
But $f(p_s)-f(p_0)\leq f(1-\frac{1}{\sqrt{r}})-f(1-\frac{2}{\sqrt{r}})=\sqrt{r-\sqrt{r}}-\sqrt[4]{r}-\sqrt{r-2\sqrt{r}}+\sqrt{2\sqrt{r}}\geq (\sqrt{2}-1)r^{1/4}> \frac{c}{10}\log (10^4 r^7)$ when $r$ is large. This shows that for $r$ sufficiently large we cannot have $p_s\geq 1-\frac{1}{\sqrt{r}}$.

Furthermore, since $0\leq f(p_i)+g(q_i)\leq O(\sqrt{r}+\log r)$ for all $0\leq i\leq s-1$ and $f(p_{i+1})+g(q_{i+1})\leq f(p_i)+g(q_i)-\frac{c}{20}$, we conclude that in at most $s=O(\sqrt{r})$ steps the process is terminated by reaching $p_s\leq 1/16r$. This completes the proof of Proposition~\ref{prop:iteration}.
\end{proof}

The following lemma shows a lower bound on $q(M)$ for integer matrices without half-sized constant submatrices. In some sense, it is the analogue of Lemma~\ref{lemma:large_F}, for integer matrices of arbitrary average entry.

\begin{lemma}\label{lemma:small_variance}
Let $M\in \mathbb{R}^{m\times n}$ be an integer matrix containing no half-sized constant submatrix. Then $q(M)\ge \frac{1}{128 r}$.
\end{lemma}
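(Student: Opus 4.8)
The plan is to argue by contraposition: if $q(M)$ is small, then $M$ coincides with a constant integer matrix $kJ$ on all but a negligible fraction of its entries, and a defect form of Lemma~\ref{lemma:low_average}(2) then extracts a half-sized constant submatrix. Here $r=\rank(M)$; note the hypothesis forbids $M=kJ$ for any integer $k$ (such a matrix is itself constant), so in particular $r\ge 1$.

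First I would set $p=p(M)$ and pick an integer $k$ nearest to $p$, so that $|p-k|\le\tfrac12$, and let $S=\{(i,j):M(i,j)\ne k\}$ be the set of ``bad'' entries. Since all entries of $M$ are integers, each bad entry has $|M(i,j)-k|\ge1$, hence $|M(i,j)-p|\ge1-|p-k|\ge\tfrac12$, and therefore
\[
q(M)\,mn=\sum_{i,j}(M(i,j)-p)^2\ \ge\ \sum_{(i,j)\in S}(M(i,j)-p)^2\ \ge\ \tfrac14\,|S|.
\]

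Now suppose towards a contradiction that $q(M)<\frac{1}{128r}$. Then $|S|<\frac{mn}{32r}\le\frac{mn}{16(r+1)}$, the last step using $r\ge1$. Consider $N=M-kJ$: it has integer entries, $\rank(N)\le r+1$ since $J$ has rank $1$, its nonzero entries are exactly those indexed by $S$, and $N\ne0$ (otherwise $M=kJ$). Since $|S|<\frac{mn}{16(r+1)}\le\frac{mn}{16\,\rank(N)}$, Lemma~\ref{lemma:low_average}(2) applied to $N$ produces an $\frac m2\times\frac n2$ all-zero submatrix of $N$, i.e.\ an $\frac m2\times\frac n2$ submatrix of $M$ all of whose entries equal $k$ --- contradicting the hypothesis that $M$ has no half-sized constant submatrix. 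Hence $q(M)\ge\frac{1}{128r}$.

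There is essentially no hard step here once Lemma~\ref{lemma:low_average}(2) is available; the only point requiring care is that replacing $M$ by $M-kJ$ may raise the rank by one, so one must work with the slightly smaller bad-entry budget $\frac{mn}{16(r+1)}$, which is exactly what turns the natural constant $\frac1{64r}$ into $\frac1{128r}$. (Re-running the induced-matching argument from the proof of Lemma~\ref{lemma:low_average}(2) directly on the ``$k$ off the diagonal, $\ne k$ on the diagonal'' pattern would not help, since such a $d\times d$ pattern can still be singular and one again only obtains $d\le r+1$.)
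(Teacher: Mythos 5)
Your proposal is correct and is essentially identical to the paper's proof: both round $p(M)$ to the nearest integer $k$, observe that off-value entries contribute at least $1/4$ each to $q(M)\,mn$, and invoke Lemma~\ref{lemma:low_average}(2) on $M-kJ$ (rank at most $r+1$) to handle the case of too few off-value entries, with the same $r\to r+1$ slack accounting for the constant $128$ rather than $64$. The only cosmetic difference is that you phrase it by contradiction while the paper phrases it as a case split.
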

\begin{proof}
Let $t$ be the integer closest to $p=p(M)$. If at least $(1-\frac{1}{16(r+1)}) mn$ entries of $M$ are equal to $t$, then we apply Lemma~\ref{lemma:low_average} to $M-tJ$, which is a sparse matrix of rank at most $r+1$. Hence, we find an $m/2\times n/2$ submatrix of $M$ whose all entries equal $t$. 

Having assumed there is no such submatrix, we conclude there are at least $\frac{mn}{16(r+1)} $ entries of $M$ are different from $t$. For each such entry $M(i, j)\neq t$, we have $|M(i, j)-p|\geq 1/2$. Hence, $q(M)mn \ge \sum_{M(i,j)\neq t} (M(i, j)-p)^2\ge \frac{1}{4}\cdot \frac{1}{16(r+1)}mn$. We conclude that $q(M) \geq \frac{1}{128r}$ as claimed. 
\end{proof}

Our next lemma is a variant of Lemma~\ref{lemma:two cases appendix}, which allows us to take steps of size $\Theta(1/r)$ instead of $\Theta(\sqrt{p/r})$, which is useful when $p\leq O(1/r)$.

\begin{lemma}\label{lemma:two cases variant appendix}
There exists an absolute constant $c>0$ such that for every $m\times n$ integer matrix $M$ of rank $r$ with no constant submatrices of size $m/4\times n/4$, the following holds. There exists a submatrix $M'\subseteq M$ of size $m/2\times n/2$ with one of the following two alternatives:
\begin{itemize}
    \item[(1)] $p(M')\leq p(M)-c/r $ and $q(M')\leq 4q(M)$, or
    \item[(2)] $p(M')\leq p(M)+12c/r$ and $q(M')\leq 2^{-200} q(M)$.
\end{itemize}
\end{lemma}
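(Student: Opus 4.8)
The plan is to follow the proof of Lemma~\ref{lemma:two cases} (equivalently Lemma~\ref{lemma:two cases appendix}) essentially verbatim, the only substantive change being the source of the lower bound on the variance $q(M)$. In those earlier lemmas one used that a separated matrix of small average satisfies $q(M)\gtrsim p$ via Lemma~\ref{lemma:large_F}; here the average $p=p(M)$ may be much smaller than $1/r$, so that estimate is useless, and instead I would feed in the no-constant-submatrix hypothesis. Since $M$ has no constant submatrix of size $m/4\times n/4$, it has no half-sized constant submatrix either (any $m/2\times n/2$ constant submatrix contains an $m/4\times n/4$ one), so Lemma~\ref{lemma:small_variance} applies and gives the absolute bound $q(M)\geq \frac{1}{128r}$; in particular $q(M)>0$, so the ratios appearing below are well-defined.

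I would then fix a sufficiently small absolute constant $c$ --- for instance $c=c_0 2^{-210}$, where $c_0$ is the constant of Lemma~\ref{lemma:disc} --- and split into two cases according to the size of $\disc(M)$, exactly as in Lemma~\ref{lemma:two cases}. If $\disc(M)\geq 3c\,mn/r$, the first part of Claim~\ref{claim:half} produces an $m/2\times n/2$ submatrix $M'$ with $p(M')\leq p(M)-\disc(M)/(3mn)\leq p(M)-c/r$, and Claim~\ref{claim:variance} gives $q(M')\leq 4q(M)$ for every half-sized submatrix (the Frobenius norm of $M'-p(M)J$ is at most that of $M-p(M)J$, and the normalizing factor changes by $4$); this is outcome~(1). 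If instead $\disc(M)<3c\,mn/r$, I would apply Lemma~\ref{lemma:disc} to get an $m/2\times n/2$ submatrix $M'$ with $\disc(M)\geq c_0 mn\, q(M')/\sqrt{r q(M)}$, hence $q(M')\leq \frac{3c}{c_0}\sqrt{q(M)/r}$; writing $\sqrt{q(M)/r}=q(M)/\sqrt{r q(M)}$ and using $r q(M)\geq 1/128$ this becomes $q(M')\leq \frac{3\sqrt{128}\,c}{c_0}\,q(M)\leq 2^{-200}q(M)$ by the choice of $c$, while the second part of Claim~\ref{claim:half} gives $p(M')\leq p(M)+4\disc(M)/(mn)< p(M)+12c/r$; this is outcome~(2).

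I do not expect a real obstacle: this is a routine adaptation of lemmas already proved. The one conceptual point to keep straight is that the estimate $q(M)\geq p/100$ used in Lemma~\ref{lemma:two cases} is vacuous in the regime $p=O(1/r)$ for which the present lemma is designed, and has to be replaced by the combinatorial variance bound $q(M)\geq 1/(128r)$ coming from Lemma~\ref{lemma:small_variance}; once this substitution is made, the discrepancy dichotomy of Lemma~\ref{lemma:two cases} runs through with step size $\Theta(1/r)$ in place of $\Theta(\sqrt{p/r})$, and all that is left is the bookkeeping carried out above verifying that a single small $c$ makes the $c/r$ decrease, the $12c/r$ increase, and the $2^{-200}$ variance shrinkage compatible.
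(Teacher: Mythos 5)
Your proof is correct and follows essentially the same two-case discrepancy argument as the paper, keyed off Lemma~\ref{lemma:disc}, Claim~\ref{claim:half}, and Lemma~\ref{lemma:small_variance}. The only (immaterial) difference is in Case~2: you lower-bound $q(M)\geq 1/(128r)$ and rewrite $\sqrt{q(M)/r}=q(M)/\sqrt{rq(M)}$, whereas the paper instead lower-bounds $q(M')\geq 1/(10^4 r)$ (noting a small $q(M')$ would yield an $m/4\times n/4$ constant submatrix) and compares $q(M')/q(M)$ directly; both routes deliver $q(M')\leq 2^{-200}q(M)$ with the same choice of $c$.
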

\begin{proof}
The proof of this lemma is just a variant of the proof of Lemma~\ref{lemma:two cases}. Set $\alpha=2^{-200}$ and $c=\alpha c_0$, where $c_0$ is the constant coming from Lemma~\ref{lemma:disc}. Write $p=p(M)$.

If the discrepancy of the matrix $M$ is at least $\disc(M)\geq 3 c mn/r$, we have outcome $(1)$. Indeed, by Claim~\ref{claim:half}, we have a half-sized submatrix $M'\subseteq M$ with $p(M') \leq p-\disc(M)/3mn\leq p-c/r$.
Moreover, $q(M')\leq 4q(M)$ for every half-sized submatrix $M'$ as we have seen before.

If the discrepancy of $M$ is at most $\disc(M)\leq 3 c mn/r$, we get outcome $(2)$. Indeed, Lemma~\ref{lemma:disc} implies that there exists a submatrix $M'\subseteq M$ for which $\disc(M)\geq c_0 mn\frac{q(M')}{\sqrt{rq(M)}}$. Observe that if $q(M')\leq \frac{1}{10^4r}$, we must have a half-sized constant submatrix of $M'$ by the previous lemma, which is assumed not to be the case. Hence, $q(M')\geq \frac{1}{10^4r}$ and so 
\[\frac{3cmn}{r}\geq c_0mn\frac{q(M')}{\sqrt{rq(M)}}\geq \frac{c_0mn}{100r} \sqrt{\frac{q(M')}{q(M)}}.\]
Canceling and squaring, we get $q(M')\leq (300\alpha)^2q(M)\leq 2^{-200}q(M)$, showing that we find ourselves in outcome $(2)$. This completes the proof.
\end{proof}

Now, we are ready to prove Theorem~\ref{thm:few_distinct_values}.

\begin{proof}[Proof of Theorem~\ref{thm:few_distinct_values}.]
The proof hinges on the following key statement. If an integer-valued matrix $M$ has $p(M)\in [\ell-\frac{2}{\sqrt{r}}, \ell+1-\frac{2}{\sqrt{r}})$ for some integer $\ell$, then either there is submatrix $M'\subseteq M$ of size $m'\times n'$ with $\frac{m}{m'}, \frac{n}{n'}\leq 2^{O(\sqrt{r})}$ which is either constant or $p(M')\in [\ell-1-\frac{2}{\sqrt{r}}, \ell-\frac{2}{\sqrt{r}})$. 

Before showing this statement, let us argue why it is sufficient to complete the proof. By the assumption of the theorem, we have $p(M)\in [\ell-\frac{2}{\sqrt{r}}, \ell+1-\frac{2}{\sqrt{r}})$ for some $\ell\leq t$. Since no submatrix of $M$ can have negative average, after at most $t$ iterative applications of the key statement, we obtain a constant submatrix $M_t\subseteq M$ of size $m_t\times n_t$ where $\frac{m}{m_t}, \frac{n}{n_t}\leq 2^{O(t\sqrt{r})}$, thus completing the proof.

We now justify the key statement. First of all, note that there exists a submatrix $M_0\subseteq M$ of size $m_0\times n_0$ with $p(M_0)\leq \ell+\frac{1}{16r}$ and $\frac{m}{m_0}, \frac{n}{n_0}\leq 2^{O(\sqrt{r})}$. If $p(M)\leq \ell+\frac{1}{16r}$ this is trivial since we can take $M_0=M$. Otherwise, if $p(M)\in (\ell+\frac{1}{16r}, \ell+1-\frac{2}{\sqrt{r}})$, we can apply Proposition~\ref{prop:iteration} to the separated matrix $M-\ell J$ to obtain the submatrix $M_0\subseteq M$ with desired properties.

Our next goal is to find an $m_s\times n_s$ submatrix $M_s\subseteq M_0$ with average entry $p(M_s)\leq \ell-\frac{2}{\sqrt{r}}$ and  $\frac{m_0}{m_s}, \frac{n_0}{n_s}\leq 2^{O(\sqrt{r})}$. As in the proof of Proposition~\ref{prop:iteration}, we define a sequence of matrices $M_0\supset M_1\supset\dots\supset M_s$ as follows. The matrix $M_{i+1}$ is a half-sized submatrix of $M_{i}$ for $i=0, 1, \dots, s$, obtained by an application of Lemma~\ref{lemma:two cases variant appendix} to $M_i$. We stop the sequence as soon as either $M_i$ contains a quarter-sized constant submatrix or $p(M_i)\leq \ell-\frac{2}{\sqrt{r}}$. Writing $p_i=p(M_i)$ and $q_i=q(M_i)$, for each $i\leq s-1$ we have one of the two outcomes  
\begin{itemize}
    \item[(1)] $p_{i+1}\leq p_i-c/r$ and $q_{i+1}\leq 4q_i$, or
    \item[(2)] $p_{i+1}\leq p_i+12c/r$ and $q_{i+1}\leq 2^{-200}q_i$.
\end{itemize}

Defining the progress function $f(p_i, q_i)=rp_i+\frac{c}{10}\log_2(\Delta q_i)$, where $\Delta=10^4 r$, we see that $f(p_{i+1}, q_{i+1})\leq f(p_i, q_i)-\frac{c}{2}$ for each $i\leq s-1$. The justification is quite simple - in case of outcome $(1)$ we have $f(p_{i+1}, q_{i+1})\leq p_ir-c+\frac{c}{10}\log_2(4\Delta q_i)=f(p_i, q_i)-8c/10$. In case of outcome $(2)$ we have $f(p_{i+1}, q_{i+1})\leq p_ir+12c+\log_2(\Delta q_i)-200\cdot c/10\leq f(p_i, q_i)-8c$.

Noting that $f(p_0, q_0)\leq r(\ell+\frac{1}{16r})+O(\log r)$, we see that for some $i=O(\sqrt{r})$ steps we have $f(p_i, q_i)\leq r(\ell+\frac{1}{16r}) - 3\sqrt{r}\cdot 1/r\leq r(\ell-\frac{2}{\sqrt{r}})$. Hence, for this $i$, we either have $q_i\leq \frac{1}{10^4 r}$, in which case we obtain a half-sized constant submatrix and we are done, or we have $p_i\leq \ell-\frac{2}{\sqrt{r}}$, thus finding our matrix $M_s$ and stopping the process. This concludes the proof. 
\end{proof}

\end{document}